\newcommand{\vertiii}[1]{{\vert\kern-0.25ex\vert\kern-0.25ex\vert #1 
		\vert\kern-0.25ex\vert\kern-0.25ex\vert}}
	\definecolor{Darkgreen}{rgb}{0,0.4,0}
\DeclareMathAlphabet\mathbfcal{OMS}{cmsy}{b}{n}
\DeclareMathAlphabet\mathbfscr{OMS}{mdugm}{b}{n}
\spnewtheorem{defn}[equation]{Definition}{\bfseries}{\upshape}
\spnewtheorem{prop}[equation]{Proposition}{\bfseries}{\upshape}
\spnewtheorem{thm}[equation]{Theorem}{\bfseries}{\upshape}
\spnewtheorem{cor}[equation]{Corollary}{\bfseries}{\upshape}
\spnewtheorem{rmk}[equation]{Remark}{\bfseries}{\upshape}
\spnewtheorem{lem}[equation]{Lemma}{\bfseries}{\upshape}
\spnewtheorem{expl}[equation]{Example}{\bfseries}{\upshape}
\numberwithin{equation}{section} 
\begin{document}
	
	\title{Note on the existence theory for non-induced evolution equations
	}
	
	\author{A. Kaltenbach}

	\institute{A. Kaltenbach \at
		Institute of Applied Mathematics, Albert-Ludwigs-University Freiburg, Ernst-Zermelo-Straße 1, 79104 Freiburg,\\
		\email{alex.kaltenbach@mathematik.uni-freiburg.de}           %  \\
	}
	
	\date{Received: date / Accepted: date}

	\maketitle
	
	\begin{abstract}
		In this note we develop a framework which allows to 
		prove an abstract existence result
		for non-linear evolution equations involving so-called 
		non-induced operators, i.e., operators which are not prescribed by a time-dependent family of operators.
		Apart from this, we introduce the notion of $C^0$-Bochner pseudo-monotonicity,
		and $C^0$-Bochner coercivity, which are appropriate adaptions of the standard
		notion to the case of evolutionary problems. 
		\keywords{Evolution equation \and Pseudo-monotone operator \and Existence result}
	\end{abstract}

\section{Introduction}
\label{intro}
The theory of pseudo-monotone operators proved itself as a reliable tool 
in the verification of the solvability of non-linear problems. At its core lies 
the main theorem on pseudo-monotone operators, tracing back to Brezis 
\cite{Bre68}, which states the following\footnote{All
	notion are defined in Section~\ref{sec:2}.} 

\begin{thm}\label{1.1}
	Let $(X,\|\cdot\|_X)$ be a reflexive Banach space and $A:X\rightarrow X^*$ a bounded, pseudo-monotone and coercive operator. Then $R(A)=X^*$.
\end{thm}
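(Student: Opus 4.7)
The plan is to apply a Galerkin approximation. Fix $f\in X^{*}$; I would either assume $X$ is separable or work with the net of all finite-dimensional subspaces ordered by inclusion (so as not to require separability). Pick an increasing sequence $(X_n)_{n\in\mathbb{N}}$ of finite-dimensional subspaces with $\overline{\bigcup_{n} X_n}=X$ and, for each $n$, seek $u_n\in X_n$ satisfying the Galerkin identity
\begin{equation*}
\langle Au_n,v\rangle=\langle f,v\rangle\qquad\text{for all }v\in X_n.
\end{equation*}

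The existence of $u_n$ is the first delicate point. I would first show (as an auxiliary lemma) that a bounded pseudo-monotone operator is demicontinuous, so that the restriction $A|_{X_n}$ is continuous, since weak and strong topologies coincide in finite dimensions. Then a corollary of Brouwer's fixed-point theorem (the acute mapping lemma) applied to $v\mapsto Av-f$ on a closed ball of sufficiently large radius $R$ supplies $u_n$, the choice of $R$ being independent of $n$ thanks to coercivity.

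Coercivity simultaneously yields a uniform bound on $\|u_n\|_X$; by reflexivity and the boundedness of $A$, a (not relabeled) subsequence satisfies $u_n\rightharpoonup u$ in $X$ and $Au_n\rightharpoonup\chi$ in $X^{*}$. For fixed $v\in X_k$, passing to the limit $n\to\infty$ in the Galerkin identity gives $\langle\chi,v\rangle=\langle f,v\rangle$; since $\bigcup_k X_k$ is dense in $X$, this forces $\chi=f$. Choosing $v=u_n$ yields $\langle Au_n,u_n\rangle=\langle f,u_n\rangle\to\langle f,u\rangle=\langle\chi,u\rangle$, so that $\limsup_{n\to\infty}\langle Au_n,u_n-u\rangle=0$. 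Pseudo-monotonicity then delivers, for every $v\in X$,
\begin{equation*}
\langle Au,u-v\rangle\leq\liminf_{n\to\infty}\langle Au_n,u_n-v\rangle=\langle f,u-v\rangle,
\end{equation*}
and varying $v$ forces $Au=f$.

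The main obstacle, in my view, is the Galerkin existence step: the hypotheses supply no a priori continuity of $A$, so one has to prove separately that a bounded pseudo-monotone operator is demicontinuous before any Brouwer-type argument can be invoked. Once this bridging lemma is in hand, the remaining passage to the limit is a rather standard compactness/density argument exploiting reflexivity, boundedness, pseudo-monotonicity, and coercivity in the way sketched above.
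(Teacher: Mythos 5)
The paper does not actually prove Theorem \ref{1.1}; it is quoted as the classical main theorem on pseudo-monotone operators with a citation to Brezis \cite{Bre68}, and the paper's own work is devoted to its evolutionary analogues. Your proof is therefore to be judged as a free-standing argument, and as such it is essentially correct and follows the standard Galerkin route, which is also structurally the stationary prototype of the Galerkin scheme the paper itself deploys for Theorems \ref{5.1} and \ref{6.1}.

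Two small remarks. First, the auxiliary lemma you flag — a bounded pseudo-monotone operator on a reflexive space is demi-continuous — is indeed the crux of the Galerkin solvability step and is correct: for $x_n\to x$ strongly, boundedness and reflexivity give a subsequence $Ax_{n_k}\rightharpoonup\xi$, while $\langle Ax_{n_k},x_{n_k}-x\rangle\to 0$, so pseudo-monotonicity yields $\langle Ax,x-y\rangle\le\langle\xi,x-y\rangle$ for all $y$, hence $Ax=\xi$, and the convergence principle upgrades this to the full sequence. In the paper this bridging role is played by the related chain ``monotone $\Rightarrow$ locally bounded'' (Proposition \ref{2.2}) together with ``locally bounded $+$ $C^0$-Bochner condition (M) $\Rightarrow$ demi-continuous'' (Proposition \ref{3.6}), so your lemma is the correct stationary analogue. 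Second, you acknowledge but do not develop the non-separable case: an increasing \emph{sequence} of finite-dimensional subspaces exhausting $X$ exists only when $X$ is separable, and a plain subsequence extraction from a net is not available. The proper fix is the Browder--Hess finite-intersection argument (localize over the separable closed subspaces of $X$ and use weak compactness of a fixed ball), which is precisely the technique the paper carries out in Section \ref{sec:6} via Lemma \ref{6.2}. With that additional step your argument closes completely; in the separable case it is already complete as written.
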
 

A remarkable number of contributions, see e.g.,
 \cite{LS65,Lio69,GGZ74,Zei90B,Pap97,Shi97,Sho97,Ru04,Rou05,BR17,KR19}, 
 dealt with the question to what extent Brezis' result is transferable 
 to the framework of non-linear evolution equations.
  A popular time-dependent analogue of Brezis' contribution is the following (cf.~\cite{Lio69,Zei90B,Ru04,Rou05})
  
  \begin{thm}\label{1.2}
  	Let $(V,H,j)$ be an evolution triple, $I:=\left(0,T\right)$ a finite time horizon, ${\boldsymbol{y}_0\in H}$ an initial value, $\boldsymbol{f}\in L^{p'}(I,V^*)$, $1<p<\infty$, a right-hand side and $\mathbfcal{A}:L^p(I,V)\rightarrow L^{p'}(I,V^*)$ a bounded, pseudo-monotone and coercive operator. Then there exists a solution 
  	$\boldsymbol{y}\in W^{1,p,p'}(I,V,V^*)$ of the initial value problem
  	\begin{align}
  	\begin{split}
  	\begin{alignedat}{2}
  	\frac{d\boldsymbol{y}}{dt}+\mathbfcal{A}\boldsymbol{y}&
  	=\boldsymbol{f}&&\quad\text{ in }L^{p'}(I,V^*),\\j(\boldsymbol{y}(0))&
  	=\boldsymbol{y}_0&&\quad\text{ in }H.
  	\end{alignedat}
  	\end{split}\label{eq:1.1}
  	\end{align}
  \end{thm}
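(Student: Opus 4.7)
The plan is a Galerkin approximation combined with the classical pseudo-monotonicity identification argument. Exploiting separability of $V$ (implicit in the evolution-triple set-up), I would choose a Galerkin basis $\{w_k\}_{k\in\mathbb{N}}\subset V$ and set $V_n:=\mathrm{span}\{w_1,\ldots,w_n\}$. The first step is to construct approximate solutions $\boldsymbol{y}_n\in W^{1,p,p'}(I,V_n,V_n^*)$ solving the projected problem obtained by testing \eqref{eq:1.1} against $w_1,\ldots,w_n$, with the initial condition replaced by a suitable approximation satisfying $j(\boldsymbol{y}_n(0))\to\boldsymbol{y}_0$ in $H$. Existence of each $\boldsymbol{y}_n$ on all of $I$ is provided by a finite-dimensional existence argument using the boundedness of $\mathbfcal{A}$; testing with $\boldsymbol{y}_n$, invoking coercivity, and using the integration-by-parts identity $\langle\tfrac{d\boldsymbol{y}_n}{dt},\boldsymbol{y}_n\rangle=\tfrac12\frac{d}{dt}\|j(\boldsymbol{y}_n)\|_H^2$ then yield a uniform a priori estimate $\|\boldsymbol{y}_n\|_{L^p(I,V)\cap L^\infty(I,H)}\le c$.

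By reflexivity and the uniform bounds, one extracts a non-relabelled subsequence with $\boldsymbol{y}_n\rightharpoonup\boldsymbol{y}$ in $L^p(I,V)$, $\mathbfcal{A}\boldsymbol{y}_n\rightharpoonup\boldsymbol{\chi}$ in $L^{p'}(I,V^*)$, and $\tfrac{d\boldsymbol{y}_n}{dt}\rightharpoonup\tfrac{d\boldsymbol{y}}{dt}$ in $L^{p'}(I,V^*)$. The continuous embedding $W^{1,p,p'}(I,V,V^*)\hookrightarrow C^0(\bar I,H)$ then produces $j(\boldsymbol{y}_n(T))\rightharpoonup j(\boldsymbol{y}(T))$ in $H$ as well as the initial condition $j(\boldsymbol{y}(0))=\boldsymbol{y}_0$, and passing to the limit against each fixed $w_k$, together with density of $\bigcup_n V_n$ in $V$, gives $\tfrac{d\boldsymbol{y}}{dt}+\boldsymbol{\chi}=\boldsymbol{f}$ in $L^{p'}(I,V^*)$.

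The genuinely delicate step, which I expect to be the main obstacle, is the identification $\boldsymbol{\chi}=\mathbfcal{A}\boldsymbol{y}$. For this I would exploit the energy identities obtained by testing the Galerkin equation and its weak limit against $\boldsymbol{y}_n$ and $\boldsymbol{y}$, respectively: subtracting them and using weak lower semicontinuity of $\|\cdot\|_H^2$ applied to $j(\boldsymbol{y}_n(T))\rightharpoonup j(\boldsymbol{y}(T))$ together with $j(\boldsymbol{y}_n(0))\to\boldsymbol{y}_0$ leads to
\begin{equation*}
\limsup_{n\to\infty}\langle\mathbfcal{A}\boldsymbol{y}_n,\boldsymbol{y}_n-\boldsymbol{y}\rangle\le 0.
\end{equation*}
Pseudo-monotonicity of $\mathbfcal{A}$ then yields $\langle\mathbfcal{A}\boldsymbol{y},\boldsymbol{y}-\boldsymbol{z}\rangle\le\liminf_{n\to\infty}\langle\mathbfcal{A}\boldsymbol{y}_n,\boldsymbol{y}_n-\boldsymbol{z}\rangle=\langle\boldsymbol{\chi},\boldsymbol{y}-\boldsymbol{z}\rangle$ for every $\boldsymbol{z}\in L^p(I,V)$, and the usual variational argument (inserting $\boldsymbol{z}=\boldsymbol{y}\pm\lambda\boldsymbol{v}$ and letting $\lambda\downarrow 0$) forces $\boldsymbol{\chi}=\mathbfcal{A}\boldsymbol{y}$, completing the proof. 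The correct handling of the terminal term $\tfrac12\|j(\boldsymbol{y}_n(T))\|_H^2$ in this identification is precisely the reason the integration-by-parts formula in the evolution-triple setting enters in an essential way, and it is this step that the later notion of $C^0$-Bochner pseudo-monotonicity is presumably designed to isolate and generalise.
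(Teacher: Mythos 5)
The paper does not actually prove Theorem~\ref{1.2}; it is stated as a classical result with references. Your sketch follows the classical Galerkin route (which the paper's Theorem~\ref{5.1} generalises), and the overall architecture is right, but there is a genuine gap precisely in the middle step. You assert that $\tfrac{d\boldsymbol{y}_n}{dt}\rightharpoonup\tfrac{d\boldsymbol{y}}{dt}$ in $L^{p'}(I,V^*)$ and then use the embedding $W^{1,p,p'}(I,V,V^*)\hookrightarrow C^0(\overline I,H)$ to obtain $j(\boldsymbol{y}_n(T))\rightharpoonup j(\boldsymbol{y}(T))$ in $H$. Neither of these is available as stated. The Galerkin time derivative $\tfrac{d_{e_{V_n}}\boldsymbol{y}_n}{dt}$ is a $V_n^*$-valued function; it does not canonically define an element of $L^{p'}(I,V^*)$, and for a general Galerkin basis one cannot obtain a uniform $L^{p'}(I,V^*)$ bound (that would require uniformly $V$-bounded projections onto the $V_n$, which an abstract evolution triple does not supply). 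Consequently $(\boldsymbol{y}_n)_{n}$ is not known to be bounded in $W^{1,p,p'}(I,V,V^*)$ and the embedding cannot be applied to the Galerkin sequence.

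The repair is what the paper carries out in Steps 3.2--3.3 of the proof of Theorem~\ref{5.1}. One first establishes $\boldsymbol{y}\in W^{1,p,p'}_e(I,V,V^*)$ by testing the Galerkin equation against $v\varphi$ with $v\in V_k$ fixed and $\varphi\in C_0^\infty(I)$, passing $n\to\infty$, and invoking density of $\bigcup_k V_k$ together with Proposition~\ref{2.11}; this uses only the $L^p(I,V)$, $L^\infty(I,H)$ and $\mathbfcal{X}^*$-bounds, not a derivative bound. Only afterwards, by testing against $v\varphi\chi_{[0,t]}$ with $\varphi(t)=1$, $\varphi(0)=0$ and integrating by parts at level $V_n$, does one obtain $(j(\boldsymbol{y}_n(t)),jv)_H\to((\boldsymbol j\boldsymbol y)(t),jv)_H$ for $v$ in a dense set, and hence, via the uniform $L^\infty(I,H)$ bound, $j(\boldsymbol{y}_n(t))\rightharpoonup(\boldsymbol j\boldsymbol y)(t)$ in $H$ for each $t$, in particular $t=T$. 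With this in hand, your $\limsup$ computation and the pseudo-monotonicity identification close the argument as you describe; you correctly flag the convergence of $j(\boldsymbol{y}_n(t))$ in $H$ as the nerve of the matter and as what $C^0$-Bochner pseudo-monotonicity is built to encapsulate. A secondary point: with $\mathbfcal{A}$ allowed to be non-induced (as Theorem~\ref{1.2} is phrased), the Galerkin problem is a functional differential equation, not an ODE, so ``a finite-dimensional existence argument'' must mean a Schauder fixed-point argument as in Step 2 of Theorem~\ref{5.1}, not Carath\'eodory's theorem.
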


A major drawback of Theorem \ref{1.2} is that the number of non-monotone, but pseudo-monotone, operators ${\mathbfcal{A}:L^p(I,V)\rightarrow L^{p'}(I,V^*)}$ is negligible and thus the scope of application of Theorem~\ref{1.2} is strictly limited. For example, consider the unsteady $p$-Navier-Stokes equations, which can be written as an initial value problem of type \eqref{eq:1.1}, where $V=W^{1,p}_{0,\text{div}}(\Omega)$\footnote{$W^{1,p}_{0,\text{div}}(\Omega)$ is the closure of $\mathcal{V}=\{v\in C^\infty(\Omega)^3\mid \text{div}v\equiv 0\}$ with respect to $\|\nabla\cdot\|_{L^p(\Omega)^{3\times 3}}$ and $L^2_{\text{div}}(\Omega)$ the closure of $\mathcal{V}$ with respect to $\|\cdot\|_{L^2(\Omega)^3}$.}, ${p>3}$, $H=L^2_{\text{div}}(\Omega)$ and $\mathbfcal{A}=\mathbfcal{S}+\mathbfcal{B}: L^p(I,V)\to L^{p'}(I,V^*)$, given via $\langle\mathbfcal{S}\boldsymbol{x},\boldsymbol{y}\rangle_{L^p(I,V)}\!=\!\int_I\int_{\Omega}{\!(\delta+\vert\boldsymbol{D}\boldsymbol{x}\vert^{p-2})\boldsymbol{D}\boldsymbol{x}:\boldsymbol{D}\boldsymbol{y}\,dxdt}$,\footnote{$\textbf{D}\boldsymbol{y}=\frac{1}{2}(\nabla\boldsymbol{y}+(\nabla\boldsymbol{y})^\top )$ denotes the symmetric gradient.} $\delta\ge 0$, and $\langle\mathbfcal{B}\boldsymbol{x},\boldsymbol{y}\rangle_{L^p(I,V)}=-\int_I\int_{\Omega}{\boldsymbol{x}\otimes\boldsymbol{x}:\boldsymbol{D}\boldsymbol{y}\,dxdt}$
for all $\boldsymbol{x},\boldsymbol{y}\in L^p(I,V)$. While $\mathbfcal{S}: L^p(I,V)\to L^{p'}(I,V^*)$ is monotone, continuous, and thus pseudo-monotone, $\mathbfcal{B}: L^p(I,V)\to L^{p'}(I,V^*)$ fails to be pseudo-monotone (cf. \cite[Remark 3.7]{KR19}). Therefore, Theorem \ref{1.2} is not applicable on the unsteady $p$-Navier-Stokes equations.

In \cite{Lio69} J.-L. Lions already observed that incorporating information from the time derivative will help to overcome this restriction. To this end, he introduced the following generalization of pseudo-monotonicity.

\begin{defn}[$\frac{d}{dt}$-pseudo-monotonicity]\label{1.21}
	Let $(V,H,j)$ be an evolution triple, $I:=\left(0,T\right)$, with $0<T<\infty$, and $1<p<\infty$. An operator $\mathbfcal{A}:W^{1,p,p'}(I,V,V^*)\to L^{p'}(I,V^*)$ is said to be \textbf{$\frac{d}{dt}$-pseudo-monotone} if for a sequence $(\boldsymbol{x}_n)_{n\in\mathbb{N}}\subseteq W^{1,p,p'}(I,V,V^*)$ from
	\begin{align}
	\boldsymbol{x}_n\overset{n\rightarrow\infty}{\rightharpoonup}
	\boldsymbol{x}\quad\text{ in }W^{1,p,p'}(I,V,V^*),\label{eq:1.9}
	\\
	\limsup_{n\rightarrow\infty}\langle\mathbfcal{A}\boldsymbol{x}_n,
	\boldsymbol{x}_n-\boldsymbol{x}\rangle_{L^p(I,V)}\leq
	0,\label{eq:1.10}
	\end{align}
	it follows that $ \langle\mathbfcal{A}\boldsymbol{x},\boldsymbol{x}
	-\boldsymbol{y}\rangle_{L^p(I,V)}
	\leq\liminf_{n\rightarrow\infty}{\langle\mathbfcal{A}\boldsymbol{x}_n,\boldsymbol{x}_n
		-\boldsymbol{y}\rangle_{L^p(I,V)}}\text{ for all }\boldsymbol{y}\in L^p(I,V)$. 
\end{defn}

With this notion Lions was able to extend Theorem \ref{1.2} to $\frac{d}{dt}$-pseudo-monotone, coercive operators $\mathbfcal{A}:W^{1,p,p'}(I,V,V^*)\to L^{p'}(I,V^*)$ satisfying a special boundedness condition which takes the time derivative into account (cf.~\cite[Th\'eor\'eme 1.2, p. 316]{Lio69}). In fact, he proved that $\mathbfcal{S}+\mathbfcal{B}: L^p(I,V)\to L^{p'}(I,V^*)$ is $\frac{d}{dt}$-pseudo-monotone, coercive and satisfies this special boundedness condition (cf.~\cite[Remarque 1.2, p. 335]{Lio69}).
Unfortunately, \cite[Th\'eor\'eme 1.2]{Lio69} is entailing an imbalance between the demanded continuity and growth conditions. To be more precise, while the required $\frac{d}{dt}$-pseudo-monotonicity is quite general, coercivity is a restrictive assumption, which in many application is not fulfilled, e.g., $\mathbfcal{S}-\mathbfcal{R}:L^p(I,V)\rightarrow L^{p'}(I,V^*)$, where $\mathbfcal{S}:L^p(I,V)\rightarrow L^{p'}(I,V^*)$ is defined as above and  $\mathbfcal{R}:L^p(I,V)\to L^{p'}(I,V^*)$ is given via $\langle \mathbfcal{R}\boldsymbol{x},\boldsymbol{y}\rangle_{L^p(I,V)}:=\int_I\int_{\Omega}{\boldsymbol{x}\cdot\boldsymbol{y}\,dxdt}$ for every $\boldsymbol{x},\boldsymbol{y}\in L^p(I,V)$, is $\frac{d}{dt}$-pseudo-monotone, but not coercive.

In \cite{KR19} this restriction is overcome by introducing alternative generalizations of pseudo-monotonicity and coercivity, which, in contrast to $\frac{d}{dt}$-pseudo-monotonicity and coercivity, both incorporate information from the time derivative, and therefore are more in balance. The idea is to weaken the pseudo-monotonicity assumption to a bearable extend, in order to make a coercivity condition accessible, which takes the information from the time derivative into account. 

\begin{defn}[Bochner pseudo-monotonicity and Bochner coercivity]\label{1.3} 
	Let $(V,H,j)$ be an evolution triple, $I:=\left(0,T\right)$, with $0<T<\infty$, and $1<p<\infty$.
		An operator $\mathbfcal{A}:L^p(I,V)\cap L^\infty(I,H)\to L^{p'}(I,V^*)$ is said to be
		\begin{description}
			\item[(i)] \textbf{Bochner pseudo-monotone} if for a sequence $(\boldsymbol{x}_n)_{n\in\mathbb{N}}\subseteq L^p(I,V)\cap L^\infty(I,H)$ from
			\begin{align}
			\boldsymbol{x}_n&\overset{n\rightarrow\infty}{\rightharpoonup}
			\boldsymbol{x}\;\;\;\;\;\;\;\;\,\quad\text{ in  }L^p(I,V),\label{eq:1.2}
			\\
			j(\boldsymbol{x}_n)&\;\;\overset{\ast}{\rightharpoondown}\;\;
			j(\boldsymbol{x})\;\;\;\;\quad\text{ in }L^\infty(I,H)\quad(n\rightarrow\infty),\label{eq:1.3}
			\\
			j(\boldsymbol{x}_n(t))&\overset{n\rightarrow\infty}{\rightharpoonup}
			j(\boldsymbol{x}(t))\quad\text{
				in }H\quad\text{for a.e. }t\in I,\label{eq:1.4}\\
			\limsup_{n\rightarrow\infty}\langle\mathbfcal{A}\boldsymbol{x}_n&,
			\boldsymbol{x}_n-\boldsymbol{x}\rangle_{L^p(I,V)}\leq
			0,\label{eq:1.5}
			\end{align}
			it follows that $ \langle\mathbfcal{A}\boldsymbol{x},\boldsymbol{x}
			-\boldsymbol{y}\rangle_{L^p(I,V)}
			\leq\liminf_{n\rightarrow\infty}{\langle\mathbfcal{A}\boldsymbol{x}_n,\boldsymbol{x}_n
				-\boldsymbol{y}\rangle_{L^p(I,V)}}\text{ for all }\boldsymbol{y}\in L^p(I,V)$. 
			\item[(ii)] \textbf{Bochner coercive with respect to $\boldsymbol{f}\in L^{p'}(I,V^*)$ and $\boldsymbol{x}_0\in H$}, if there exists a constant $M:=M(\boldsymbol{f},\boldsymbol{x}_0,\mathbfcal{A})>0$ 
			such that for all $\boldsymbol{x}\in L^p(I,V)\cap L^\infty(I,H)$ 
			from
			\begin{align*}
			\frac{1}{2}\|(\boldsymbol{j}\boldsymbol{x})(t)\|_H^2+\langle\mathbfcal{A}\boldsymbol{x}-
			\boldsymbol{f},\boldsymbol{x}\chi_{\left[0,t\right]}\rangle_{L^p(I,V)}\leq \frac{1}{2}\|\boldsymbol{x}_0\|_H^2\quad
			\text{ for a.e. }t\in I
			\end{align*}
			it follows that $\|\boldsymbol{x}\|_{L^p(I,V)\cap L^\infty(I,H)}\leq M$.
		\end{description}
\end{defn}

Bochner pseudo-monotonicity and Bochner coercivity in \cite{KR19} turned out to be  appropriate generalizations of pseudo-monotonicity and coercivity
for evolution equations since they both take into account the additional 
information from the time derivative, coming from the generalized 
integration by parts formula (cf. Proposition \ref{2.12}). In fact, in \cite{KR19} it is illustrated that \eqref{eq:1.2}--\eqref{eq:1.5} are natural properties of a sequence ${(\boldsymbol{x}_n)_{n\in\mathbb{N}}\subseteq L^p(I,V)\cap L^\infty(I,H)}$ coming from an appropriate Galerkin approximation of \eqref{eq:1.1}. To be more precise,
\eqref{eq:1.2} and \eqref{eq:1.3} result from the Bochner coercivity of $\mathbfcal{A}$, and therefore take into account information from both the operator and the time derivative (cf.~discussion below Definition \ref{3.7}), while \eqref{eq:1.4}
and \eqref{eq:1.5} follow directly from the Galerkin
approximation. In this way, \cite[Theorem 4.1]{KR19} provides an existence result for the initial value problem \eqref{eq:1.1} provided that ${\mathbfcal{A}:L^p(I,V)\cap L^\infty(I,H) \rightarrow L^{p'}(I,V^*)}$
  is Bochner pseudo-monotone, Bochner coercive and \textbf{induced} by a time-dependent family of
  operators ${A(t):V\rightarrow V^*}$, $t\in I$, i.e., 
  $(\mathbfcal{A}\boldsymbol{x})(t):=A(t)(\boldsymbol{x}(t))$ in $V^*$ for almost every 
  $t\in I$ and all $\boldsymbol{x}\in L^p(I,V)\cap L^\infty(I,H)$, satisfying 
  appropriate growth conditions 
  (cf.~\cite[Conditions (C.1)--(C.3)]{KR19}). Note that both $\mathbfcal{S}+\mathbfcal{B}: L^p(I,V)\to L^{p'}(I,V^*)$ and $\mathbfcal{S}-\mathbfcal{R}: L^p(I,V)\to L^{p'}(I,V^*)$ are Bochner pseudo-monotone and Bochner coercive (cf. \cite[Example 5.1]{KR19}), and  \cite[Theorem 4.1]{KR19} applicable.
  
  However, there are still non-negligible disadvantages of \cite[Theorem 4.1]{KR19} in comparison to Theorem \ref{1.2} and \cite[Th\'eor\'eme 1.2]{Lio69}, which consist in its non-applicability on non-induced
  operators and the needed separability of $V$. The necessity of an induced operator can be traced back to the verification of the existence of Galerkin solutions which in \cite{KR19} is based
  on Carath\'eodory's existence theorem for ordinary differential 
  equations (cf.~\cite[Theorem 5.1]{Hal80}) and the usual associated
  extension argument providing global in time existence. The latter argument 
  requires that $\mathbfcal{A}$ is a \textbf{Volterra operator}, i.e., 
  $\boldsymbol{x}=\boldsymbol{y}$ on $\left[0,t\right)$ for all 
  $t\in \overline{I}$ implies $\mathbfcal{A}\boldsymbol{x}=\mathbfcal{A}
  \boldsymbol{y}$ on $\left[0,t\right)$ for all $t\in \overline{I}$ 
  (cf. \cite[Kap. V, Definition 1.1]{GGZ74}). The separability of $V$ yields the existence of an increasing sequence of finite dimensional subspaces which approximates $V$ up to density. This increasing structure seems to be indispensable for the extraction of \eqref{eq:1.4} from the Galerkin approach applied in \cite{KR19}. 
  
  The main purpose of this paper is to remove these limitations and extend the new gap-filling concepts of \cite{KR19} to the abstract level of Theorem \ref{1.2} and \cite[Th\'eor\'eme 1.2]{Lio69}, i.e., to prove an existence result for non-induced, bounded, Bochner pseudo-monotone and Bochner coercive operators in the case of purely reflexive $V$, in order to gain a proper alternative to \cite[Th\'eor\'eme 1.2]{Lio69} and a generalization of \cite[Theorem 4.1]{KR19} and Theorem \ref{1.2}. To this end, we will combine the 
  modi operandi of \cite{Bro68}, \cite{BH72} and \cite{KR19}. To be more specific, 
  as one fails to extract \eqref{eq:1.4} from the Galerkin approximation method 
  applied in \cite[Theorem 1]{Bro68}, we are forced to fall back on the usual 
  Galerkin approach as in \cite{KR19}. 
  Therefore, we initially limit ourselves to the case of separable, reflexive $V$ and 
  extend this result to the case of purely reflexive $V$ by techniques from 
  \cite{Bro68,BH72} afterwards.
  
  A further intention of this paper is to point out that there is still space for generalizations of Bochner pseudo-monotonicity. Indeed, since \eqref{eq:1.3} together with \eqref{eq:1.4} is strictly 
  weaker than weak convergence in $L^\infty(I,H)$ (cf.~\cite[Remark 3.5]{Tol18} or Remark \ref{rmk}), one may be reluctant to require the latter in Definition \ref{1.3}. However, in
  \cite{KR19} it is shown that a sequence of Galerkin approximations
  in $L^p(I,V)\cap C^0(\overline{I},H) $ satisfies \eqref{eq:1.3} and \eqref{eq:1.4}
  not only for almost every, but for all $t\in\overline{I}$, which is
  equivalent to weak convergence in $C^0(\overline{I},H)$ 
  (cf.~\cite[Theorem 4.3]{BT38}). This suggests a generalization of Bochner 
  pseudo-monotonicity that respects the weak sequential topology in $C^0(\overline{I},H)$. 
  Therefore, we say that an operator
  $\mathbfcal{A}:L^p(I,V)\cap C^0(\overline{I},H)\rightarrow L^{p'}(I,V^*)$ is
  \textbf {$C^0$-Bochner pseudo-monotone} if from \eqref{eq:1.5} and
  \begin{align}
  \boldsymbol{x}_n\overset{n\rightarrow\infty}{\rightharpoonup}
  \boldsymbol{x}\text{ in  }L^p(I,V)\cap C^0(\overline{I},H),\label{eq:1.6}
  \end{align}
  it follows that
  ${\langle\mathbfcal{A}\boldsymbol{x},\boldsymbol{x} -
  	\boldsymbol{y}\rangle_{L^p(I,V)} \leq
  	\liminf_{n\rightarrow\infty}{\langle\mathbfcal{A}\boldsymbol{x}_n,
  		\boldsymbol{x}_n-\boldsymbol{y}\rangle_{L^p(I,V)}}\text{ for all }\boldsymbol{y}\in L^p(I,V)}$. 
  We will see that Bochner pseudo-monotonicity implies $C^0$-Bochner pseudo-monotonicity, but the converse is not true in general (cf.~Remark \ref{rmk}). In the same spirit, 
  we introduce $C^0$-Bochner condition (M) and $C^0$-Bochner coercivity as appropriate generalizations
  of the condition (M) and coercivity for evolution equations, as they take the additional energy 
  space $C^0(\overline{I},H)$ into account. 
  
Altogether, we prove an existence result for bounded, $C^0$-Bochner pseudo-monotone and $C^0$-Bochner coercive operators $\mathbfcal{A}:L^p(I,V)\cap C^0(\overline{I},H)\rightarrow L^{p'}(I,V^*)$, including also non-induced operators, even in the case of purely reflexive $V$.
 Note that any bounded and coercive, or Bochner coercive, operator
  $\mathbfcal{A}:L^p(I,V)\rightarrow L^{p'}(I,V^*)$ is $C^0$-Bochner coercive (cf.~Proposition \ref{3.8}), 
  and that Bochner pseudo-monotonicity or usual pseudo-monotonicity imply
  $C^0$-Bochner pseudo-monotonicity (cf.~Remark \ref{rmk}). We will thus gain a proper 
  generalization of both Theorem~\ref{1.2} and \cite{KR19}.

\textbf{Plan of the paper:} In Section \ref{sec:2} we introduce
the notation and some basic definitions and results concerning continuous 
functions, Bochner-Lebesgue spaces, Bochner-Sobolev spaces and evolution
equations. In Section \ref{sec:3} we introduce the new notions $C^0$-Bochner
pseudo-monotonicity, $C^0$-Bochner condition (M) and $C^0$-Bochner 
coercivity. In Section \ref{sec:4} we specify the implemented Galerkin approach.
In Section \ref{sec:5} we prove in the case of separable and reflexive $V$ an existence result for
evolution equations with not necessarily induced, bounded and $C^0$-Bochner coercive operators 
satisfying the $C^0$-Bochner condition (M).
Section \ref{sec:6} extends the results of Section \ref{sec:5} to the case of purely reflexive $V$.

The paper is an extended and modified version of parts of the thesis \cite{alex-master}.

\section{Preliminaries}
\label{sec:2}

\subsection{Operators}
For a Banach space $X$ with norm $\|\cdot\|_X$ we denote by $X^*$ its 
dual space equipped with the norm $\|\cdot\|_{X^*}$, by $\tau(X,X^*)$ 
the corresponding weak topology and by $B_M^X(x)$ the closed ball 
with radius $M>0$ and centre $x\in X$. The duality pairing is denoted 
by $\langle\cdot,\cdot\rangle_X$. All occurring Banach spaces are assumed 
to be real. By $D(A)$ we denote the domain of definition of an operator 
$A:D(A)\subseteq X\rightarrow Y$, and by $R(A):=\{Ax\mid x\in D(A)\}$ its range.
\begin{defn}\label{2.1}
	Let $(X,\|\cdot\|_X)$ and $(Y,\|\cdot\|_Y)$ be Banach spaces. 
	The operator $A:D(A)\subseteq X\rightarrow Y$ is said to be
		\begin{description}[{(viii)}]
			\item[{(i)}] \textbf{demi-continuous}, if $D(A)=X$, and
			$x_n\overset{n\rightarrow\infty}{\rightarrow}x$ in $X$ 
			implies $Ax_n\overset{n\rightarrow\infty}{\rightharpoonup}Ax$ in $Y$.
			\item[{(ii)}] \textbf{strongly continuous}, if $D(A)=X$, and
			$x_n\overset{n\rightarrow\infty}{\rightharpoonup}x$ in $X$
			 implies $Ax_n\overset{n\rightarrow\infty}{\rightarrow}Ax$ in $Y$.
			\item[{(iii)}] \textbf{compact}, if 
			$A:D(A)\subseteq X\rightarrow Y$ is continuous and 
			for all bounded $M\subseteq D(A)\subseteq X$ 
			the image $A(M)\subseteq Y$ is relatively compact.
			\item[{(iv)}] \textbf{bounded}, if for all bounded 
			$M\subseteq D(A)\subseteq X$ the image $A(M)\subseteq Y$ is bounded.
		\item[{(v)}] \textbf{locally bounded}, if for all $x_0\in D(A)$ there 
		exist constants $\varepsilon(x_0), \delta(x_0)>0$ such that 
		$\|Ax\|_Y\leq \varepsilon(x_0)$ for all $x\in D(A)$ with $\|x-x_0\|_X\leq \delta(x_0)$.
			\item[{(vi)}] \textbf{monotone}, if $Y=X^*$ 
			and $\langle Ax-Ay,x-y\rangle_X\ge 0$ for all $x,y\in D(A)$.
			\item[{(vii)}] \textbf{pseudo-monotone}, if $Y=X^*$, $D(A)=X$ and for $(x_n)_{n\in\mathbb{N}}\subseteq X$ from $x_n\overset{n\to\infty}{\rightharpoonup}x$ in $X$ and $\limsup_{n\to\infty}{\langle Ax_n,x_n-x\rangle_X}\leq 0$, it follows $\langle Ax,x-y\rangle_X\leq \liminf_{n\to\infty}{\langle Ax_n,x_n-y\rangle_X}$ for every $y\in X$.
			\item[{(viii)}] \textbf{coercive}, if $Y=X^*$, 
			$D(A)$ is unbounded and $	\lim_{\substack{\|x\|_X\rightarrow\infty\\x\in D(A)}}
			{\frac{\langle Ax,x\rangle_X}{\|x\|_X}}=\infty$.
		\end{description}
\end{defn}
The following proposition states that monotone operators satisfy 
certain boundedness properties and motivates to consider 
non-bounded operators provided that they are monotone.
\begin{prop}\label{2.2}
	Let $(X,\|\cdot\|_X)$ be a Banach space and 
	$A:X\rightarrow X^*$ monotone. Then it holds:
	\begin{description}[{(ii)}]
		\item[{(i)}] $A:X\rightarrow X^*$ is locally bounded.
		\item[{(ii)}] Let $S\subseteq X$ be a set, $h:S\rightarrow\left[0,1\right]$ 
		a function and $M,C>0$ constants such that for all $s\in S$
		\begin{align*}
		\|s\|_X\leq M\qquad\text{ and }\qquad h(s)\langle As,s\rangle_X\leq C.
		\end{align*}
		Then there exists a constant $K:=K(C,M,A)>0$ such that $\|h(s)As\|_{X^*}\leq K$
		for all $s\in S$.
	\end{description}
\end{prop}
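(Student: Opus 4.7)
The plan is to prove (i) by the classical uniform boundedness argument for monotone operators, and then deduce (ii) from (i) by a direct monotonicity inequality that exploits the factor $h(s)$.

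For part (i), I would argue by contradiction. Fix $x_0 \in X$; after a translation in the argument and codomain we may assume $x_0 = 0$. If $A$ is not locally bounded at $0$, there is a sequence $x_n \to 0$ in $X$ with $\|A x_n\|_{X^*} \to \infty$. The key tool is the uniform boundedness principle, which I would apply to the rescaled sequence $A x_n / \gamma_n$, where $\gamma_n := 1 + |\langle A x_n, x_n \rangle_X|$. The input to Banach--Steinhaus is produced by monotonicity: for fixed $y \in X$, applying $\langle A x_n - A(\pm y), x_n - (\pm y)\rangle_X \ge 0$ and solving for $\langle A x_n, y \rangle_X$ in both signs gives a two-sided estimate
\begin{equation*}
|\langle A x_n, y\rangle_X| \;\le\; |\langle A x_n, x_n\rangle_X| + c(y),
\end{equation*}
with $c(y)$ independent of $n$ (since $(x_n)$ is bounded). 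Dividing by $\gamma_n$, the sequence $(A x_n / \gamma_n)_{n \in \mathbb{N}}$ is pointwise bounded on $X$, hence norm-bounded by Banach--Steinhaus. On the other hand, $\gamma_n \le 1 + \|A x_n\|_{X^*} \|x_n\|_X$, so a short case analysis on whether $\|A x_n\|_{X^*} \|x_n\|_X$ stays bounded or not shows $\|A x_n\|_{X^*}/\gamma_n \to \infty$, giving the desired contradiction.

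For part (ii), I would leverage monotonicity directly. For any $s \in S$ and $y \in X$, monotonicity applied to the pair $(s, y)$ gives $\langle A s, y\rangle_X \le \langle As, s\rangle_X - \langle A y, s - y\rangle_X$. Multiplying by $h(s) \in [0,1]$ and using the hypothesis $h(s)\langle A s, s\rangle_X \le C$ together with $\|s\|_X \le M$ yields
\begin{equation*}
\langle h(s) A s, y\rangle_X \;\le\; C + \|A y\|_{X^*}(M + \|y\|_X),
\end{equation*}
and the same inequality for $-y$ in place of $y$ produces a matching lower bound involving $\|A(-y)\|_{X^*}$. Now I invoke part (i): since $A$ is locally bounded at $0$, there exist $\delta, \varepsilon > 0$ with $\|A y\|_{X^*} \le \varepsilon$ whenever $\|y\|_X \le \delta$. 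Consequently, for every $y \in X$ with $\|y\|_X \le \delta$, $|\langle h(s) A s, y\rangle_X| \le C + \varepsilon(M + \delta)$ uniformly in $s \in S$. Taking the supremum over the ball of radius $\delta$ and rescaling gives $\|h(s) A s\|_{X^*} \le (C + \varepsilon(M+\delta))/\delta =: K$, which depends only on $C$, $M$, and $A$.

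The main obstacle is the argument in part (i): monotone operators need not be bounded on bounded sets, so one cannot hope for more than local boundedness, and the normalization step that feeds Banach--Steinhaus with the correct pointwise bounds is the delicate point. Once (i) is in hand, part (ii) is a one-line monotonicity inequality combined with the local bound at $0$, and the hypothesis that $h$ takes values in $[0,1]$ is used only to turn the coercive-type control $h(s)\langle A s, s\rangle_X \le C$ into a control on $\langle h(s) A s, y\rangle_X$.
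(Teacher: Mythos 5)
Your proof is correct. For part (ii) your argument is essentially the paper's: you derive $\langle As,y\rangle_X\le\langle As,s\rangle_X+\langle Ay,y-s\rangle_X$ from monotonicity, multiply by $h(s)\in[0,1]$, invoke local boundedness at the origin, and take the supremum over a small sphere; the paper folds the $\pm y$ case distinction into a single supremum over $\{\|x\|_X=\delta\}$ but the estimate is identical. For part (i) the paper simply cites \cite[Kapitel III, Lemma 1.2]{GGZ74}, while you reproduce the standard Banach--Steinhaus proof of local boundedness for monotone maps; your normalization by $\gamma_n=1+|\langle Ax_n,x_n\rangle_X|$ and the ensuing case analysis are sound.
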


\begin{proof}
	Concerning point (i) we refer to \cite[Kapitel III, Lemma 1.2]{GGZ74}. 
	Point (ii) is a modification of \cite[Kapitel III, Folgerung 1.2]{GGZ74}. 
	Being more precise, the locally boundedness of 
	$A:X\rightarrow X^*$ provides constants $\varepsilon, \delta>0$ 
	such that $\|Ax\|_{X^*}\leq \varepsilon$ for all $x\in X$
	with $\|x\|_X\leq \delta$. With the help of a scaled version 
	the norm formula we finally obtain for all $s\in S$
	\begin{align*}
	\|h(s)As\|_{X^*}=\sup_{\|x\|_X= \delta}{h(s)\frac{\langle As,x\rangle_X}{\delta}}
	\leq\sup_{\|x\|_X= \delta}{h(s)\frac{\langle As,s\rangle_X+\langle Ax,x-s\rangle_X}{\delta}}
	\leq\frac{C+\varepsilon\left(\delta+M\right)}{\delta},
	\end{align*}
	where we exploited the monotonicity of $A:X\rightarrow X^*$ in the first inequality.\hfill$\square$
\end{proof}

\subsection{Continuous functions and Bochner-Lebesgue spaces}
In this passage we collect some well-known results concerning 
continuous functions and Bochner-Lebesgue spaces, which will 
find use in the following. By $(X,\|\cdot\|_X)$ and $(Y,\|\cdot\|_Y)$ 
we always denote Banach spaces and by $I:=\left(0,T\right)$, 
with $0<T<\infty$, a finite time interval.
The first proposition serves in parts as motivation for 
$C^0$-Bochner pseudo-monotonicity.

\begin{prop}\label{2.3}
	It holds $\boldsymbol{x}_n\overset{n\rightarrow\infty}{\rightharpoonup}
	\boldsymbol{x}$ in $C^0(\overline{I},X)$ if and only 
	if $(\boldsymbol{x}_n)_{n\in\mathbb{N}}\subseteq C^0(\overline{I},X)$ 
	is bounded and $\boldsymbol{x}_n(t)\overset{n\rightarrow\infty}{\rightharpoonup}
	\boldsymbol{x}(t)\text{ in }X$
	for all $t\in\overline{I}$.
\end{prop}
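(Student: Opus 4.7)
The plan is to prove the two implications separately.

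For the forward implication, I would first apply the uniform boundedness principle: every weakly convergent sequence in a Banach space is norm-bounded, so $(\boldsymbol{x}_n)_{n\in\mathbb{N}}$ is bounded in $C^0(\overline{I},X)$. Then, for each fixed $t\in\overline{I}$, the point-evaluation map $\delta_t\colon C^0(\overline{I},X)\to X$, $\delta_t(\boldsymbol{y}):=\boldsymbol{y}(t)$, is linear with $\|\delta_t(\boldsymbol{y})\|_X\leq\|\boldsymbol{y}\|_{C^0(\overline{I},X)}$, hence bounded and therefore weak-to-weak continuous. Composing, $\boldsymbol{x}_n(t)=\delta_t(\boldsymbol{x}_n)\rightharpoonup \delta_t(\boldsymbol{x})=\boldsymbol{x}(t)$ in $X$ for every $t\in\overline{I}$.

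For the reverse implication, assume $(\boldsymbol{x}_n)_{n\in\mathbb{N}}$ is bounded in $C^0(\overline{I},X)$, say $\|\boldsymbol{x}_n\|_{C^0(\overline{I},X)}\leq K$ for all $n\in\mathbb{N}$, and that $\boldsymbol{x}_n(t)\rightharpoonup \boldsymbol{x}(t)$ in $X$ for every $t\in\overline{I}$. I have to show that $L(\boldsymbol{x}_n)\to L(\boldsymbol{x})$ for each $L\in (C^0(\overline{I},X))^*$. The approach I would take is to invoke the vector-valued Riesz-type representation theorem of Singer: every $L\in (C^0(\overline{I},X))^*$ is of the form $L(\boldsymbol{y})=\int_{\overline{I}}\boldsymbol{y}(t)\,dm(t)$ for a unique regular $X^*$-valued Borel measure $m$ on $\overline{I}$ of bounded semivariation with $\|m\|(\overline{I})=\|L\|$. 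Taking, for each $n\in\mathbb{N}$, the scalar integrand $t\mapsto \langle m(\cdot),\boldsymbol{x}_n(t)\rangle_X$ one obtains pointwise convergence from the hypothesis together with a uniform bound $K$ in the $C^0(\overline{I})$-norm, so a vector-valued dominated convergence theorem applied with respect to the scalar semivariation measure associated with $m$ yields $L(\boldsymbol{x}_n)\to L(\boldsymbol{x})$.

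The main obstacle is the reverse direction. An alternative, slightly more elementary, route that avoids the full strength of Singer's theorem is to observe that the collection $\mathcal{D}:=\{\boldsymbol{y}\mapsto \int_{\overline{I}}\varphi(t)\langle x^*,\boldsymbol{y}(t)\rangle_X\,d\nu(t):\varphi\in C^0(\overline{I}),\,x^*\in X^*,\,\nu\in M(\overline{I})\}$ exhausts (up to norm closure) the elements of $(C^0(\overline{I},X))^*$ that arise from the injective tensor product structure $C^0(\overline{I},X)\cong C^0(\overline{I})\otimes_\varepsilon X$. On $\mathcal{D}$ convergence is immediate from the hypothesis combined with the scalar Lebesgue dominated convergence theorem (the integrand $t\mapsto \varphi(t)\langle x^*,\boldsymbol{x}_n(t)\rangle_X$ converges pointwise and is dominated by $\|\varphi\|_\infty\|x^*\|_{X^*}K$). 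An $\varepsilon/3$-argument using the uniform bound $\sup_n\|\boldsymbol{x}_n\|_{C^0(\overline{I},X)}\leq K$ then transfers convergence from a norm-dense subset of the dual to all of it. The delicate point is the density/spanning claim, which is ultimately equivalent to the vector-measure representation and is precisely where the referenced classical result \cite{BT38} enters.
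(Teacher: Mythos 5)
The paper itself supplies no argument here: its ``proof'' is the single citation \cite[Theorem 4.3]{BT38}. Your forward implication is correct and standard (uniform boundedness for the norm bound, weak--to--weak continuity of the bounded linear evaluation $\delta_t$ for the pointwise convergence). The converse is the substance of the result, and there your argument has a genuine gap. In the first route, the object $\langle m(\cdot),\boldsymbol{x}_n(t)\rangle_X$ is not a well-defined scalar function of $t$ --- $m$ is a set function, not an element of $X^*$ --- so there is no integrand to which a dominated convergence theorem can be applied as stated. What would actually be needed is a weak$^*$ Radon--Nikod\'ym derivative $g\colon\overline{I}\to X^*$ of $m$ with respect to its variation $|m|$, i.e.\ a weak$^*$-measurable $g$ with $\|g(t)\|_{X^*}\leq 1$ a.e.\ and $\langle m(E),x\rangle_X=\int_E\langle g(t),x\rangle_X\,d|m|(t)$ for all $x\in X$; producing such a $g$ for arbitrary (possibly non-separable) $X$ requires a lifting argument and is not a formal corollary of Singer's representation. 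The alternative route does not repair this: the $\varepsilon/3$-argument needs \emph{norm} density of the span of the rank-one functionals in $(C^0(\overline{I},X))^*$, which is neither proved nor obvious --- it amounts to approximating the weak$^*$ density $g$ above by simple functions in $L^1(|m|,X^*)$, which is problematic unless $g$ is strongly measurable.

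A short argument closes the gap without the vector Riesz theorem and without any density lemma. Equip the closed unit ball $B^{X^*}_1(0)$ with the weak$^*$ topology; by Alaoglu, $\overline{I}\times B^{X^*}_1(0)$ is compact Hausdorff. Define $\Phi\colon C^0(\overline{I},X)\to C^0(\overline{I}\times B^{X^*}_1(0))$, the latter being the Banach space of real-valued continuous functions on $\overline{I}\times B^{X^*}_1(0)$, by $(\Phi\boldsymbol{y})(t,x^*):=\langle x^*,\boldsymbol{y}(t)\rangle_X$. One checks directly that $\Phi\boldsymbol{y}$ is jointly continuous (the pairing of a weak$^*$-convergent net of uniformly bounded functionals with a norm-convergent net) and that $\Phi$ is a linear isometry, since $\sup_{t\in\overline{I}}\sup_{\|x^*\|_{X^*}\leq 1}|\langle x^*,\boldsymbol{y}(t)\rangle_X|=\|\boldsymbol{y}\|_{C^0(\overline{I},X)}$. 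Given $L\in(C^0(\overline{I},X))^*$, extend $L\circ\Phi^{-1}$ from $R(\Phi)$ to all of $C^0(\overline{I}\times B^{X^*}_1(0))$ by Hahn--Banach, and represent the extension by the \emph{scalar} Riesz theorem as a finite regular Borel measure $\mu$, so that $L(\boldsymbol{y})=\int_{\overline{I}\times B^{X^*}_1(0)}\langle x^*,\boldsymbol{y}(t)\rangle_X\,d\mu(t,x^*)$. Your hypotheses give pointwise convergence of these scalar integrands everywhere on $\overline{I}\times B^{X^*}_1(0)$ together with the uniform bound $K$, and the ordinary dominated convergence theorem then yields $L(\boldsymbol{x}_n)\to L(\boldsymbol{x})$.
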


\begin{proof}
	See \cite[Theorem 4.3]{BT38}.\hfill$\square$
\end{proof}

\begin{prop}\label{2.4}
	Let $1\leq p\leq \infty$ and let $A:X\rightarrow Y$ be linear and continuous. 
	Then the induced operator $\mathbfcal{A}:L^p(I,X)\rightarrow L^p(I,Y)$,
	 defined by $(\mathbfcal{A}\boldsymbol{x})(t):=A(\boldsymbol{x}(t))$ in $Y$
	 for almost every $t\in I$ and all $\boldsymbol{x}\in L^p(I,X)$, 
	 is well-defined, linear and continuous. Furthermore, it holds:
	\begin{description}[{(iii)}]
		\item[{(i)}] $A\left(\int_I{\boldsymbol{x}(s) \;ds}\right)=
		\int_I{(\mathbfcal{A}\boldsymbol{x})(s)\;ds}\text{ in }Y$ 
		for all $\boldsymbol{x}\in L^p(I,X)$.
		\item[{(ii)}] If $A:X\rightarrow Y$ is an embedding, 
		then also $\mathbfcal{A}:L^p(I,X)\rightarrow L^p(I,Y)$ is an embedding. 
		\item[{(iii)}] If $A:X\rightarrow Y$ is an isomorphism, 
		then also $\mathbfcal{A}:L^p(I,X)\rightarrow L^p(I,Y)$ is an isomorphism.
	\end{description}
\end{prop}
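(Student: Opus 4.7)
The plan is to establish well-definedness, linearity, and continuity in one stroke via a strong measurability argument combined with the pointwise operator-norm bound, and then to derive the three auxiliary assertions by transferring pointwise properties of $A$ to $\mathbfcal{A}$.

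For well-definedness, given $\boldsymbol{x}\in L^p(I,X)$ I would pick a sequence of simple functions $\boldsymbol{x}_n:I\to X$ converging to $\boldsymbol{x}$ pointwise almost everywhere (which exists by Bochner's definition of strong measurability). Since $A$ is linear, each $A\circ\boldsymbol{x}_n$ is again a simple function with values in $Y$, and since $A$ is continuous, $A(\boldsymbol{x}_n(t))\to A(\boldsymbol{x}(t))$ for a.e.\ $t\in I$; hence $A\circ\boldsymbol{x}$ is strongly measurable. The pointwise bound $\|A(\boldsymbol{x}(t))\|_Y\leq\|A\|_{\mathcal{L}(X,Y)}\|\boldsymbol{x}(t)\|_X$ then yields, after $L^p$-integration (or essential supremum for $p=\infty$),
\begin{align*}
\|\mathbfcal{A}\boldsymbol{x}\|_{L^p(I,Y)}\leq \|A\|_{\mathcal{L}(X,Y)}\|\boldsymbol{x}\|_{L^p(I,X)},
\end{align*}
which gives simultaneously $\mathbfcal{A}\boldsymbol{x}\in L^p(I,Y)$ and the continuity of $\mathbfcal{A}$; linearity is immediate from the pointwise definition.

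For assertion (i), I would verify the formula first for simple functions $\boldsymbol{x}=\sum_{k=1}^N\chi_{E_k}x_k$, in which case both sides equal $\sum_{k=1}^N|E_k|\,A(x_k)$ by linearity of $A$. Since $I$ is bounded, $L^p(I,X)\hookrightarrow L^1(I,X)$ for all $1\leq p\leq\infty$, so a general $\boldsymbol{x}\in L^p(I,X)$ can be approximated in $L^1(I,X)$ by simple functions $\boldsymbol{x}_n$. Then $\int_I\boldsymbol{x}_n\,ds\to\int_I\boldsymbol{x}\,ds$ in $X$ by definition of the Bochner integral, continuity of $A$ gives convergence of the left-hand sides in $Y$, and the continuity estimate established above yields $\mathbfcal{A}\boldsymbol{x}_n\to\mathbfcal{A}\boldsymbol{x}$ in $L^1(I,Y)$, so the right-hand sides converge too; equality passes to the limit.

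For (ii), I interpret an embedding as a continuous linear injection (equivalently, with $\|x\|_X\lesssim\|Ax\|_Y$ when additionally topological). If $\mathbfcal{A}\boldsymbol{x}=0$ in $L^p(I,Y)$, then $A(\boldsymbol{x}(t))=0$ for a.e.\ $t\in I$, and injectivity of $A$ forces $\boldsymbol{x}=0$ in $L^p(I,X)$; if moreover $c\|x\|_X\leq\|Ax\|_Y$ for some $c>0$, integrating (or taking ess sup) in the $t$-variable transfers the norm equivalence to $\mathbfcal{A}$. For (iii), if $A:X\to Y$ is an isomorphism, then $A^{-1}:Y\to X$ is linear and continuous, and applying the first part of the proposition to $A^{-1}$ produces a well-defined, linear, continuous induced operator $\mathbfcal{A}^{-1}:L^p(I,Y)\to L^p(I,X)$; the pointwise identities $A\circ A^{-1}=\mathrm{id}_Y$ and $A^{-1}\circ A=\mathrm{id}_X$ then translate directly into the corresponding identities for the induced operators on $L^p$-spaces.

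The only genuinely subtle point is the strong measurability step in the proof of well-definedness, since one must be careful that the approximating simple functions can be chosen so that $A\circ\boldsymbol{x}_n$ remains simple (which is automatic by linearity of $A$) and so that pointwise a.e.\ convergence is preserved under $A$ (automatic by continuity). Everything else is a straightforward transfer of pointwise statements to $L^p$-statements using the uniform bound $\|A\|_{\mathcal{L}(X,Y)}$, and the argument is uniform in $1\leq p\leq\infty$ thanks to the boundedness of $I$.
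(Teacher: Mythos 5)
Your proof is correct and takes essentially the same route as the source the paper cites: the main assertions (well-definedness, linearity, continuity and item (i)) rest on simple-function approximation together with the pointwise operator-norm bound, which is precisely the content of the Yosida corollary referenced in the paper's proof. For items (ii) and (iii) the paper merely declares the verification elementary and omits it; your arguments---injectivity via $\mathbfcal{A}\boldsymbol{x}=\boldsymbol{0}$ forcing $\boldsymbol{x}=\boldsymbol{0}$ almost everywhere, and the inverse via applying the first part to $A^{-1}$---supply exactly those omitted details.
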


\begin{proof}
	 Concerning the well-definedness, linearity and boundedness 
	 including point (i) we refer to \cite[Chapter V, 5. Bochner's Integral, Corollary 2]{Yos80}. 
	 The verification of assertions (ii) and (iii) is elementary and thus omitted.\hfill$\square$ 
\end{proof}

We use the in \cite{KR19} proposed alternative 
point of view concerning intersections of Banach spaces, 
which is specified in the Appendix. We emphasize that the 
standard definition of intersections of Banach spaces 
(cf.~\cite{BS88}) is equivalent to our approach and all 
the following assertions remain true if we use the framework
 in \cite{BS88}. The next remark examines how the concepts 
 of the Appendix transfer to the Bochner-Lebesgue level.

\begin{rmk}[Induced compatible couple]\label{2.5}
	Let $(X,Y)=(X,Y,Z,e_X,e_Y)$ be a compatible couple 
	(cf.~Definition \ref{7.2}) and $1\leq p,q\leq\infty$. In 
	\cite[Chapter 3, Theorem 1.3]{BS88} it is proved that 
	the sum $R(e_X)+R(e_Y)\subseteq Z$ equipped with the norm
	\begin{align*}
	\|z\|_{R(e_X)+R(e_Y)}:=\inf_{\substack{x\in X,y\in Y\\z=e_Xx+e_Yy}}
	{\max\{\|x\|_X,\|y\|_Y\}}
	\end{align*}
	is a Banach space.
	Then both $e_X:X\rightarrow R(e_X)+R(e_Y)$ and 
	$e_Y:Y\rightarrow R(e_X)+R(e_Y)$ 
	are embeddings (cf.~Definition \ref{7.1}) and therefore due 
	to Proposition \ref{2.4} the induced operators
	\begin{align*}
	\begin{alignedat}{2}
	\boldsymbol{e}_X&:L^p(I,X)\rightarrow L^1(I,R(e_X)+R(e_Y)),&&
	\text{ given via }(\boldsymbol{e}_X\boldsymbol{x})(t):=e_X(\boldsymbol{x}(t))
	\text{ for a.e. }t\in I,\\\boldsymbol{e}_Y&:L^q(I,Y)\rightarrow L^1(I,R(e_X)+R(e_Y)),&&
	\text{ given via }(\boldsymbol{e}_Y\boldsymbol{y})(t):=e_Y(\boldsymbol{y}(t))
\:	\text{ for a.e. }t\in I.
	\end{alignedat}
	\end{align*}
	are embeddings as well. Consequently, the couples
	\begin{align*}
	(L^p(I,X),L^q(I,Y))&=(L^p(I,X),L^q(I,Y),L^1(I,R(e_X)+R(e_Y)),
	\boldsymbol{e}_X,\boldsymbol{e}_Y),\\
	(L^p(I,X),C^0(\overline{I},Y))&=(L^p(I,X),C^0(\overline{I},Y),
	L^1(I,R(e_X)+R(e_Y)),\boldsymbol{e}_X,\boldsymbol{e}_Y\text{id}_{C^0(\overline{I},Y)})
	\end{align*}
	are compatible couples. In accordance with Definition \ref{7.3}, 
	the pull-back intersections
	\begin{align*}
	L^p(I,X)\cap_{\boldsymbol{j}}L^q(I,Y)\qquad\text{ and }\qquad 
	L^p(I,X)\cap_{\boldsymbol{j}}C^0(\overline{I},Y),
	\end{align*}
	where $
	\boldsymbol{j}:=\boldsymbol{e}_Y^{-1}\boldsymbol{e}_X$, and
	 their corresponding intersection embeddings
	\begin{align*}
	\boldsymbol{j}:L^p(I,X)\cap_{\boldsymbol{j}}L^q(I,Y)\rightarrow L^q(I,Y)
	\qquad\text{ and }\qquad\boldsymbol{j}:L^p(I,X)\cap_{\boldsymbol{j}}
	C^0(\overline{I},Y)\rightarrow C^0(\overline{I},Y)
	\end{align*}
	are well-defined.
\end{rmk}

\begin{prop}\label{2.6}
	Let $(X,Y)$ be a compatible couple, $X$ reflexive and $1< p<\infty$. Then for a sequence $(\boldsymbol{x}_n)_{n\in\mathbb{N}}\subseteq L^p(I,X)\cap_{\boldsymbol{j}} C^0(\overline{I},Y)$ and an element $\boldsymbol{x}\in L^p(I,X)\cap_{\boldsymbol{j}} C^0(\overline{I},Y)$ it holds $\boldsymbol{x}_n\overset{n\rightarrow\infty}
	{\rightharpoonup}\boldsymbol{x}$ 
	in $L^p(I,X)\cap_{\boldsymbol{j}} C^0(\overline{I},Y)$ if and only if 
	$(\boldsymbol{x}_n)_{n\in\mathbb{N}}\subseteq L^p(I,X)
	\cap_{\boldsymbol{j}} C^0(\overline{I},Y)$ is bounded and 
	$(\boldsymbol{j}\boldsymbol{x}_n)(t)\overset{n\rightarrow\infty}{\rightharpoonup}
	(\boldsymbol{j}\boldsymbol{x})(t)\text{ in }Y$
	for all $t\in \overline{I}$.
\end{prop}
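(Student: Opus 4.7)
\medskip

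\textbf{Plan.} The forward direction is essentially immediate from the general theory of intersections developed in the Appendix: by Definition \ref{7.3}, the intersection embedding $\boldsymbol{j}:L^p(I,X)\cap_{\boldsymbol{j}}C^0(\overline{I},Y)\to C^0(\overline{I},Y)$ and the canonical inclusion into $L^p(I,X)$ are continuous, hence weakly continuous. So weak convergence in the intersection passes to weak convergence of $\boldsymbol{j}\boldsymbol{x}_n\rightharpoonup \boldsymbol{j}\boldsymbol{x}$ in $C^0(\overline{I},Y)$, and an application of Proposition~\ref{2.3} yields the pointwise statement. Boundedness of $(\boldsymbol{x}_n)$ follows from the Banach--Steinhaus theorem since the intersection is a Banach space.

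\textbf{Backward direction.} Assume $(\boldsymbol{x}_n)_{n\in\mathbb{N}}$ is bounded in the intersection and $(\boldsymbol{j}\boldsymbol{x}_n)(t)\rightharpoonup (\boldsymbol{j}\boldsymbol{x})(t)$ in $Y$ for every $t\in\overline{I}$. First, $(\boldsymbol{j}\boldsymbol{x}_n)_{n\in\mathbb{N}}$ is bounded in $C^0(\overline{I},Y)$, so by Proposition~\ref{2.3} we already obtain $\boldsymbol{j}\boldsymbol{x}_n\rightharpoonup \boldsymbol{j}\boldsymbol{x}$ in $C^0(\overline{I},Y)$. Since $X$ is reflexive and $1<p<\infty$, $L^p(I,X)$ is reflexive as well, so $(\boldsymbol{x}_n)_{n\in\mathbb{N}}$ admits a subsequence (not relabelled) with $\boldsymbol{x}_n\rightharpoonup \boldsymbol{y}$ in $L^p(I,X)$ for some $\boldsymbol{y}\in L^p(I,X)$.

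The key identification step is $\boldsymbol{y}=\boldsymbol{x}$ in the intersection. For this I invoke the induced embeddings of Remark~\ref{2.5}: both $\boldsymbol{e}_X$ and $\boldsymbol{e}_Y$ are continuous into $L^1(I,R(e_X)+R(e_Y))$, so weak convergence of $\boldsymbol{x}_n$ in $L^p(I,X)$ propagates to $\boldsymbol{e}_X\boldsymbol{x}_n\rightharpoonup \boldsymbol{e}_X\boldsymbol{y}$ in $L^1(I,R(e_X)+R(e_Y))$, while weak convergence of $\boldsymbol{j}\boldsymbol{x}_n$ in $C^0(\overline{I},Y)$ (combined with the continuity of $\boldsymbol{e}_Y$) yields $\boldsymbol{e}_Y\boldsymbol{j}\boldsymbol{x}_n\rightharpoonup \boldsymbol{e}_Y\boldsymbol{j}\boldsymbol{x}$ in the same space. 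By the defining relation of $\boldsymbol{j}$ these two weak limits must agree, and injectivity of $\boldsymbol{e}_X$ forces $\boldsymbol{y}=\boldsymbol{x}$ in $L^p(I,X)$. Since this identification is independent of the subsequence, the whole sequence satisfies $\boldsymbol{x}_n\rightharpoonup \boldsymbol{x}$ in $L^p(I,X)$.

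Finally, to upgrade the two component-wise weak convergences to weak convergence in the intersection, I use the standard duality description of a pull-back intersection: every continuous linear functional $\ell$ on $L^p(I,X)\cap_{\boldsymbol{j}}C^0(\overline{I},Y)$ arises, by Hahn--Banach extension through the natural continuous injection into $L^p(I,X)\oplus C^0(\overline{I},Y)$, as a sum $\ell(\boldsymbol{z})=\tilde f(\boldsymbol{z})+\tilde g(\boldsymbol{j}\boldsymbol{z})$ for some $\tilde f\in L^{p'}(I,X^*)$ and $\tilde g\in (C^0(\overline{I},Y))^*$. Applying $\ell$ to $\boldsymbol{x}_n-\boldsymbol{x}$ and using the two weak convergences established above gives $\ell(\boldsymbol{x}_n)\to \ell(\boldsymbol{x})$, as required. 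The main obstacle is the duality representation of $(L^p(I,X)\cap_{\boldsymbol{j}}C^0(\overline{I},Y))^*$ as a sum of restrictions of functionals from the factor spaces; this is a general Hahn--Banach argument for pull-back intersections, and once available the rest reduces to routine reflexivity and embedding arguments.
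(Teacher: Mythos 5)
Your proposal is correct, but you take a noticeably longer route than the paper, which simply combines Proposition~\ref{2.3} with Proposition~\ref{7.5}~(iii). In effect you re-derive, in this concrete instance, both characterizations of weak convergence in a pull-back intersection that the Appendix already records abstractly: your Hahn--Banach duality argument showing that every $\ell\in(L^p(I,X)\cap_{\boldsymbol{j}}C^0(\overline{I},Y))^*$ splits as $\tilde f+\tilde g\circ\boldsymbol{j}$ is precisely the content of Proposition~\ref{7.5}~(ii), while your subsequence extraction in the reflexive factor followed by identification of the weak limit through $\boldsymbol{e}_X$, $\boldsymbol{e}_Y$ and injectivity into $L^1(I,R(e_X)+R(e_Y))$ is exactly the proof of Proposition~\ref{7.5}~(iii). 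What you flag as ``the main obstacle'' --- the duality description of the intersection --- is therefore already available off the shelf. The upside of your version is that it makes the mechanism visible rather than delegating it to the Appendix; the downside is the redundancy, and a minor cosmetic point: you write $\tilde f\in L^{p'}(I,X^*)$, which implicitly invokes the Radon--Nikodym property of $X^*$ (valid here since $X$ is reflexive), whereas all the argument needs is $\tilde f\in(L^p(I,X))^*$. Both directions of your proof are sound.
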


\begin{proof}
	Immediate consequence of Proposition \ref{2.3} and Proposition \ref{7.5} (iii).\hfill$\square$
\end{proof}

\subsection{Bochner-Sobolev spaces}
Let $(X,\|\cdot\|_X)$ and $(Y,\|\cdot\|_Y)$ be Banach spaces, $j:X\rightarrow Y$ an embedding, 
$I:=\left(0,T\right)$, with $0<T<\infty$, and $1\leq p,q\leq\infty$. A function 
$\boldsymbol{x}\in L^p(I,X)$ has a \textbf{generalized time derivative with respect 
to $j$ in $L^q(I,Y)$} if there exists a function $\boldsymbol{g}\in L^q(I,Y)$ 
such that
	\begin{align*}
	j\left(-\int_I{\boldsymbol{x}(s)\varphi^\prime(s)\;ds}\right)
	=\int_I{\boldsymbol{g}(s)\varphi(s)\;ds}
	\quad\text{ in }Y\quad\text{ for all }\varphi\in C_0^\infty(I).
	\end{align*}
	As such a function $\boldsymbol{g}\in L^q(I,Y)$ is unique 
	(cf.~\cite[Proposition 23.18]{Zei90A}), 
	$\frac{d_j\boldsymbol{x}}{dt}:=\boldsymbol{g}$ is well-defined. By
	\begin{align*}
	W_j^{1,p,q}(I,X,Y):=\bigg\{\boldsymbol{x}\in L^p(I,X)\bigg|\;
	\exists\frac{d_j\boldsymbol{x}}{dt}\in L^q(I,Y)\bigg\}
	\end{align*}
	we denote the \textbf{Bochner-Sobolev space with respect to }$j$,
	 which is equipped with norm
	\begin{align*}
	\|\cdot\|_{W_j^{1,p,q}(I,X,Y)}:=\|\cdot\|_{L^p(I,X)}+\left\|\frac{d_j}{dt}\cdot\right\|_{L^q(I,Y)}
	\end{align*}
	a Banach space (cf.~\cite[Lemma II.5.10]{BF13}). In the case $Y=X$ 
	and $j=\text{id}_X$ we define for sake of readability 
	$\frac{d_X}{dt}:=\frac{d_{\text{id}_X}}{dt}$ and 
	$W^{1,p,q}(I,X):=W_{\text{id}_X}^{1,p,q}(I,X,X)$. 

\begin{prop}\label{2.7} Let $j:X\rightarrow Y$ be an embedding 
	and $\boldsymbol{j}:L^p(I,X)\rightarrow L^p(I,Y)$ 
	given via $(\boldsymbol{j}\boldsymbol{x})(t):=j(\boldsymbol{x}(t))$ in $Y$
	for almost every $t\in I$ and all $\boldsymbol{x}\in L^p(I,X)$. 
	Then it holds $\boldsymbol{x}\in W^{1,p,q}_j(I,X,Y)$ if and only
	 if $\boldsymbol{x}\in L^p(I,X)$ and 
	 $\boldsymbol{j}\boldsymbol{x}\in W^{1,p,q}(I,Y)$. In this case we have
	\begin{align}
		\frac{d_j\boldsymbol{x}}{dt}=\frac{d_Y\boldsymbol{j}\boldsymbol{x}}{dt}
		\quad\text{ in }L^q(I,Y).\label{eq:2.7a}
	\end{align}
\end{prop}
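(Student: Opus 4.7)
The plan is to unwind the two definitions of generalized time derivative and show they coincide term-by-term, with the bridge being Proposition \ref{2.4}(i), which lets me pull the continuous linear operator $j:X\rightarrow Y$ through a Bochner integral.

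First, I would fix $\boldsymbol{x}\in L^p(I,X)$ and observe that $\boldsymbol{j}\boldsymbol{x}\in L^p(I,Y)$ is automatic from Proposition \ref{2.4} applied to $A=j$. Then, for any $\varphi\in C_0^\infty(I)$, the product $\boldsymbol{x}\varphi'\in L^p(I,X)$ and $(\boldsymbol{j}\boldsymbol{x})\varphi'=\boldsymbol{j}(\boldsymbol{x}\varphi')\in L^p(I,Y)$. Applying Proposition \ref{2.4}(i) to $\boldsymbol{x}\varphi'$ yields the key identity
\begin{align*}
j\!\left(\int_I \boldsymbol{x}(s)\varphi'(s)\,ds\right)=\int_I j(\boldsymbol{x}(s))\varphi'(s)\,ds=\int_I(\boldsymbol{j}\boldsymbol{x})(s)\varphi'(s)\,ds\quad\text{in }Y.
\end{align*}

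With this identity in hand, the defining equation for $\boldsymbol{x}\in W^{1,p,q}_j(I,X,Y)$, namely $j(-\int_I\boldsymbol{x}(s)\varphi'(s)\,ds)=\int_I\boldsymbol{g}(s)\varphi(s)\,ds$ in $Y$ for some $\boldsymbol{g}\in L^q(I,Y)$, reads equivalently as $-\int_I(\boldsymbol{j}\boldsymbol{x})(s)\varphi'(s)\,ds=\int_I\boldsymbol{g}(s)\varphi(s)\,ds$, which is precisely the defining equation for $\boldsymbol{j}\boldsymbol{x}\in W^{1,p,q}(I,Y)$ with derivative $\boldsymbol{g}=\frac{d_Y\boldsymbol{j}\boldsymbol{x}}{dt}$. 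Both directions of the equivalence follow at once, and since the uniqueness of the generalized derivative (cited right after the definition, from \cite[Proposition 23.18]{Zei90A}) forces the witnessing $\boldsymbol{g}$ to coincide, identity \eqref{eq:2.7a} is immediate.

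No substantial obstacle is expected: the entire argument reduces to the commutation $j\circ\int_I=\int_I\circ\,\boldsymbol{j}$, which is already packaged in Proposition \ref{2.4}(i). The only point requiring a moment of care is to notice that $\varphi'\in C_0^\infty(I)$ is bounded, so multiplication by $\varphi'$ preserves $L^p(I,X)$, thereby validating the application of Proposition \ref{2.4}(i).
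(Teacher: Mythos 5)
Your proposal is correct and follows exactly the route the paper intends: the proof in the paper is a one-line remark that the statement is ``a straightforward application of Proposition \ref{2.4} (i),'' and your argument simply spells out that application, pulling $j$ through the Bochner integral to identify the two defining equations term by term.
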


\begin{proof}
A straightforward application of Proposition \ref{2.4} (i).\hfill$\square$
\end{proof}

\begin{prop}\label{2.8}
	Let $A:X\rightarrow Y$ be linear and continuous. Then the induced operator 
	$\mathbfcal{A}:W^{1,p,q}(I,X)\rightarrow W^{1,p,q}(I,Y)$, defined by 
	$(\mathbfcal{A}\boldsymbol{x})(t):=A(\boldsymbol{x}(t))$ in $Y$
	for almost every $t\in I$ and all $\boldsymbol{x}\in W^{1,p,q}(I,X)$,
	 is well-defined, linear and continuous. Furthermore, it holds
	\begin{align}
	\frac{d_Y\mathbfcal{A}\boldsymbol{x}}{dt}=\mathbfcal{A}\frac{d_X\boldsymbol{x}}{dt}\quad\text{ in }L^q(I,Y)\label{eq:kom}
	\end{align}
	for all $\boldsymbol{x}\in W^{1,p,q}(I,X)$. If $A:X\rightarrow Y$ is 
	additionally an isomorphism, then the induced operator 
	$\mathbfcal{A}:W^{1,p,q}(I,X)\rightarrow W^{1,p,q}(I,Y)$ is an isomorphism as well.
\end{prop}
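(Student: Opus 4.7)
The plan is to reduce everything to Proposition \ref{2.4} by treating the function and its generalized time derivative separately, then gluing the two pieces back together through the defining identity of $\frac{d_X\boldsymbol{x}}{dt}$.

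First I would apply Proposition \ref{2.4} twice to $A:X\to Y$: once with exponent $p$ to get a well-defined, linear, continuous operator $\mathbfcal{A}:L^p(I,X)\to L^p(I,Y)$, and once with exponent $q$ to get a well-defined, linear, continuous operator $\mathbfcal{A}:L^q(I,X)\to L^q(I,Y)$. In particular, for $\boldsymbol{x}\in W^{1,p,q}(I,X)$ we already have $\mathbfcal{A}\boldsymbol{x}\in L^p(I,Y)$ and $\mathbfcal{A}\frac{d_X\boldsymbol{x}}{dt}\in L^q(I,Y)$, and the bounds $\|\mathbfcal{A}\boldsymbol{x}\|_{L^p(I,Y)}\le \|A\|\,\|\boldsymbol{x}\|_{L^p(I,X)}$ and $\|\mathbfcal{A}\frac{d_X\boldsymbol{x}}{dt}\|_{L^q(I,Y)}\le \|A\|\,\|\frac{d_X\boldsymbol{x}}{dt}\|_{L^q(I,Y)}$ are at hand.

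Next I would verify \eqref{eq:kom}, which simultaneously proves $\mathbfcal{A}\boldsymbol{x}\in W^{1,p,q}(I,Y)$ and identifies its derivative. Fix $\varphi\in C_0^\infty(I)$. Since $\varphi'\in L^\infty(I)$, the function $s\mapsto\boldsymbol{x}(s)\varphi'(s)$ lies in $L^p(I,X)$, so Proposition \ref{2.4}(i) applied to $\boldsymbol{x}\varphi'$ gives
\[
A\!\left(\int_I \boldsymbol{x}(s)\varphi'(s)\,ds\right)=\int_I A(\boldsymbol{x}(s))\varphi'(s)\,ds=\int_I (\mathbfcal{A}\boldsymbol{x})(s)\varphi'(s)\,ds\quad\text{in }Y.
\]
Using the defining identity of $\frac{d_X\boldsymbol{x}}{dt}$ on the left-hand side, and then applying Proposition \ref{2.4}(i) a second time to $\frac{d_X\boldsymbol{x}}{dt}\varphi\in L^q(I,X)$, I obtain
\[
-\int_I (\mathbfcal{A}\boldsymbol{x})(s)\varphi'(s)\,ds=\int_I A\!\left(\tfrac{d_X\boldsymbol{x}}{dt}(s)\right)\varphi(s)\,ds=\int_I \Bigl(\mathbfcal{A}\tfrac{d_X\boldsymbol{x}}{dt}\Bigr)(s)\varphi(s)\,ds\quad\text{in }Y,
\]
which is exactly the statement that $\mathbfcal{A}\boldsymbol{x}\in W^{1,p,q}(I,Y)$ with $\frac{d_Y\mathbfcal{A}\boldsymbol{x}}{dt}=\mathbfcal{A}\frac{d_X\boldsymbol{x}}{dt}$. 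Combining this identity with the $L^p$- and $L^q$-bounds from the first step yields $\|\mathbfcal{A}\boldsymbol{x}\|_{W^{1,p,q}(I,Y)}\le \|A\|\,\|\boldsymbol{x}\|_{W^{1,p,q}(I,X)}$, establishing well-definedness, linearity (inherited from $A$) and continuity.

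For the isomorphism part I would apply the already-proved part of the proposition to the continuous linear operator $A^{-1}:Y\to X$ to obtain a continuous linear $\mathbfcal{B}:W^{1,p,q}(I,Y)\to W^{1,p,q}(I,X)$ induced by $A^{-1}$. Since $A^{-1}A=\mathrm{id}_X$ and $AA^{-1}=\mathrm{id}_Y$ pointwise a.e.\ in $I$, the two compositions $\mathbfcal{B}\mathbfcal{A}$ and $\mathbfcal{A}\mathbfcal{B}$ coincide with the identities on $W^{1,p,q}(I,X)$ and $W^{1,p,q}(I,Y)$ respectively. The proof contains no real obstacle; the only subtlety worth double-checking is ensuring that the factors $\varphi'$ and $\varphi$ keep $\boldsymbol{x}\varphi'$ and $\frac{d_X\boldsymbol{x}}{dt}\varphi$ in the Bochner-Lebesgue spaces required to invoke Proposition \ref{2.4}(i), which holds because $\varphi\in C_0^\infty(I)$ has compact support and bounded derivative.
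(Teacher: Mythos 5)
Your proof is correct and complete. The paper itself only cites an external reference (Droniou, Proposition~2.5.1) for the well-definedness, linearity, boundedness and the commutation identity \eqref{eq:kom}, and dismisses the isomorphism part as "transferring obviously," so there is no explicit argument in the paper to compare against. Your self-contained argument is the natural one: invoking Proposition~\ref{2.4} at the $L^p$- and $L^q$-levels, pushing $A$ through the defining distributional identity of $\frac{d_X\boldsymbol{x}}{dt}$ via Proposition~\ref{2.4}~(i) applied to $\boldsymbol{x}\varphi'$ and to $\frac{d_X\boldsymbol{x}}{dt}\,\varphi$ (both of which lie in the required Bochner--Lebesgue spaces because $\varphi,\varphi'\in L^\infty(I)$ and $I$ is bounded), reading off the resulting identity as the definition of $\frac{d_Y\mathbfcal{A}\boldsymbol{x}}{dt}$, and deducing the norm bound. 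The isomorphism claim is handled cleanly by applying the same construction to $A^{-1}$ and checking the compositions pointwise a.e. This is exactly what a full proof of the cited result would have to do, so your argument is both correct and a useful replacement for the external citation.
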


\begin{proof}
	Concerning the well-definedness, linearity, boundedness and \eqref{eq:kom} we refer to \cite[Proposition 2.5.1]{Dro01}. The isomorphism property transfers obviously.
	\hfill $\square$
\end{proof}

\begin{prop}\label{2.9}
	\begin{description}
		\item[(i)] \textbf{First fundamental theorem of calculus for Bochner-Sobolev functions:} 
		Each $\boldsymbol{x}\in W^{1,p,q}(I,X)$ (defined almost everywhere) possesses 
		a unique representation $\{\boldsymbol{x}\}_X\in C^0(\overline{I},X)$ with
		 \begin{align*}
		\{\boldsymbol{x}\}_X(t)=\{\boldsymbol{x}\}_X(t')+
		\int_{t'}^t{\frac{d_X\boldsymbol{x}}{dt}(s)\;ds}\quad\text{ in }X
		\end{align*}
		for all $t',t\in \overline{I}$ with $t'\leq t$. The resulting \textbf{choice function} 
		$\{\cdot\}_X:W^{1,p,q}(I,X)\rightarrow C^0(\overline{I},X)$ is an embedding 
		which we denote by $W^{1,p,q}(I,X)\hookrightarrow C^0(\overline{I},X)$. 
		In consequence, it holds $W^{1,p,q}(I,X)=W^{1,\infty,q}(I,X)$ with norm equivalence. 
		We thus set $W^{1,q}(I,X):=W^{1,\infty,q}(I,X)$.
		\item[(ii)] \textbf{Second fundamental theorem of calculus for Bochner-Sobolev functions:} The operator $\mathbfcal{V}:L^q(I,X)\rightarrow W^{1,q}(I,X)$, given via
		\begin{align*}
		(\mathbfcal{V}\boldsymbol{x})(t):=
		\int_0^t{\boldsymbol{x}(s)\;ds}\quad\text{ in }X\quad\text{ for all }t\in\overline{I}
		\end{align*}
		and every $\boldsymbol{x}\in L^q(I,X)$, is a continuous right inverse of $\frac{d_X}{dt}:W^{1,q}(I,X)\rightarrow L^q(I,X)$.
	\end{description}
\end{prop}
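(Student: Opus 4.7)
The plan is to construct the continuous representative explicitly via the Bochner integral of the generalized time derivative, and then to verify the stated properties by duality with scalar test functions.

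For part (i), pick a Lebesgue point $t_0\in I$ at which the almost-everywhere defined representative of $\boldsymbol{x}$ takes a well-defined value in $X$, and set $\boldsymbol{y}(t):=\boldsymbol{x}(t_0)+\int_{t_0}^t\frac{d_X\boldsymbol{x}}{dt}(s)\,ds$ for every $t\in\overline{I}$. Continuity of $\boldsymbol{y}:\overline{I}\to X$ follows from the absolute continuity of the Bochner integral (invoking Hölder's inequality in the variable $s$ to estimate $\|\boldsymbol{y}(t)-\boldsymbol{y}(t')\|_X\leq |t-t'|^{1/q'}\|\tfrac{d_X\boldsymbol{x}}{dt}\|_{L^q(I,X)}$). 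The core step is to show that $\boldsymbol{y}=\boldsymbol{x}$ almost everywhere. Testing against arbitrary $\varphi\in C_0^\infty(I)$ and using Fubini to swap the double integral together with integration by parts in the scalar variable gives
\begin{align*}
\int_I \boldsymbol{y}(s)\varphi'(s)\,ds
=-\int_I \frac{d_X\boldsymbol{x}}{dt}(s)\varphi(s)\,ds
=\int_I \boldsymbol{x}(s)\varphi'(s)\,ds,
\end{align*}
where the last identity is just the definition of the generalized time derivative. Invoking the uniqueness statement underlying the definition of $\frac{d_X\boldsymbol{x}}{dt}$ (cf.~\cite[Proposition 23.18]{Zei90A}) applied to $\boldsymbol{y}-\boldsymbol{x}$, which has vanishing generalized derivative and thus must be constant in the sense of equivalence classes, yields $\boldsymbol{y}=\boldsymbol{x}$ almost everywhere. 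We set $\{\boldsymbol{x}\}_X:=\boldsymbol{y}$; uniqueness of the continuous representative is immediate since two continuous $X$-valued functions on $\overline{I}$ that agree almost everywhere agree everywhere, and the integral identity for arbitrary $t',t\in\overline{I}$ follows at once.

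Linearity and injectivity of $\{\cdot\}_X$ are immediate. For the embedding property one bounds $\|\{\boldsymbol{x}\}_X(t_0)\|_X$ by an averaging argument: integrating the defining identity in $t_0$ over a subinterval of $I$ shows $\|\{\boldsymbol{x}\}_X(t_0)\|_X\lesssim \|\boldsymbol{x}\|_{L^p(I,X)}+T^{1/q'}\|\tfrac{d_X\boldsymbol{x}}{dt}\|_{L^q(I,X)}$, and then the pointwise bound $\|\{\boldsymbol{x}\}_X(t)\|_X\leq \|\{\boldsymbol{x}\}_X(t_0)\|_X+T^{1/q'}\|\tfrac{d_X\boldsymbol{x}}{dt}\|_{L^q(I,X)}$ gives continuity of $\{\cdot\}_X:W^{1,p,q}(I,X)\to C^0(\overline{I},X)$. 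The identification $W^{1,p,q}(I,X)=W^{1,\infty,q}(I,X)$ with norm equivalence then follows from the chain of continuous embeddings $W^{1,p,q}(I,X)\hookrightarrow C^0(\overline{I},X)\hookrightarrow L^\infty(I,X)\hookrightarrow L^p(I,X)$.

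For part (ii), continuity of $(\mathbfcal{V}\boldsymbol{x}):\overline{I}\to X$ is again obtained from absolute continuity of the Bochner integral, and the bound $\|\mathbfcal{V}\boldsymbol{x}\|_{L^\infty(I,X)}\leq T^{1/q'}\|\boldsymbol{x}\|_{L^q(I,X)}$ is immediate from Hölder. To identify the generalized derivative, test against $\varphi\in C_0^\infty(I)$ and use Fubini together with $\varphi(0)=\varphi(T)=0$:
\begin{align*}
-\int_I (\mathbfcal{V}\boldsymbol{x})(s)\,\varphi'(s)\,ds
=-\int_I\!\int_0^s \boldsymbol{x}(r)\,dr\,\varphi'(s)\,ds
=\int_I \boldsymbol{x}(r)\,\varphi(r)\,dr,
\end{align*}
which is exactly the defining identity for $\frac{d_X}{dt}(\mathbfcal{V}\boldsymbol{x})=\boldsymbol{x}$ in $L^q(I,X)$. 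This shows $\mathbfcal{V}$ is a right inverse of $\frac{d_X}{dt}$, and continuity of $\mathbfcal{V}:L^q(I,X)\to W^{1,q}(I,X)$ follows from the two norm estimates just mentioned combined with $\|\tfrac{d_X}{dt}\mathbfcal{V}\boldsymbol{x}\|_{L^q(I,X)}=\|\boldsymbol{x}\|_{L^q(I,X)}$.

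The main obstacle is the Bochner-valued fundamental lemma of the calculus of variations used in part (i): one needs that if $\int_I \boldsymbol{z}(s)\varphi'(s)\,ds=0$ in $X$ for all $\varphi\in C_0^\infty(I)$, then $\boldsymbol{z}$ is constant almost everywhere. This is precisely the uniqueness property of the generalized derivative cited above, and it is the only nontrivial input; everything else reduces to elementary manipulations of the Bochner integral together with Hölder's inequality.
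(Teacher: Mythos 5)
The paper's own ``proof'' of Proposition~\ref{2.9} is nothing but a pair of citations (to \cite[Lemma~II.5.11]{BF13} for (i) and \cite[Kap.~IV, Lemma~1.8]{GGZ74} for the bulk of (ii)), so you have in effect supplied the textbook argument the author chose to outsource. Your self-contained proof is the standard one and is correct in substance: define the candidate continuous representative by a Bochner primitive of the generalized derivative, identify it with $\boldsymbol{x}$ up to an additive constant via duality and Fubini, and read off the embedding and the identity $W^{1,p,q}=W^{1,\infty,q}$ from the resulting $C^0$-bound; for (ii) verify the right-inverse property by another Fubini computation. This is almost certainly the content of the cited lemmas, so you and the paper agree modulo the decision to write it out.

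Two points of precision are worth tightening. First, what you actually invoke is not the uniqueness of the generalized derivative (\cite[Proposition~23.18]{Zei90A}, i.e.\ the vector-valued fundamental lemma $\int_I\boldsymbol{z}\varphi\,ds=0$ for all $\varphi\in C_0^\infty(I)$ implies $\boldsymbol{z}=0$ a.e.) but the du~Bois-Reymond variant: $\int_I\boldsymbol{z}\varphi'\,ds=0$ for all $\varphi\in C_0^\infty(I)$ implies $\boldsymbol{z}$ is a.e.\ equal to a constant in $X$. The latter follows from the former by the usual normalisation trick, but they are not the same statement, and this is the genuine ``nontrivial input'' you correctly single out at the end; you should cite or prove the du~Bois-Reymond form rather than the uniqueness statement. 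Moreover the du~Bois-Reymond lemma only gives $\boldsymbol{y}-\boldsymbol{x}=c$ a.e.\ for some $c\in X$; identifying $c=0$ is precisely where the Lebesgue point property of $t_0$ enters (via $\frac1h\int_{t_0}^{t_0+h}\boldsymbol{x}\,ds\to\boldsymbol{x}(t_0)=\boldsymbol{y}(t_0)$), and that step should be made explicit — alternatively, one avoids Lebesgue points entirely by setting $\{\boldsymbol{x}\}_X:=\boldsymbol{y}+c$. Second, the explicit modulus-of-continuity bound $\|\boldsymbol{y}(t)-\boldsymbol{y}(t')\|_X\le|t-t'|^{1/q'}\|\tfrac{d_X\boldsymbol{x}}{dt}\|_{L^q}$ degenerates when $q=1$ (there $1/q'=0$); absolute continuity of the Bochner integral, which you also invoke, is what carries that borderline case, so the Hölder estimate should be presented as valid only for $q>1$ and not as the general justification.
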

\begin{proof}
	Concerning point (i) we refer to \cite[Lemma II.5.11]{BF13}. Point (ii) except for 
	the continuity one can find in \cite[Kap. IV, Lemma 1.8]{GGZ74}. 
	The verification of the stated continuity is an elementary calculation and thus omitted.
	\hfill $\square$
\end{proof} 

The following result guarantees the compactness, which is indispensable for the applicability of Schauder's 
fixed point theorem, and thus the existence of Galerkin approximations.

\begin{prop}\label{2.10}
	Let $(X,\|\cdot\|_X)$ be a finite dimensional Banach space and $1\leq q\leq\infty$. 
	Then the choice function in Proposition \ref{2.9} (i) is strongly continuous 
	and in the case $1<q\leq \infty$ compact.
\end{prop}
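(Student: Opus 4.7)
The plan is to invoke the Arzelà--Ascoli theorem on the image $(\{\boldsymbol{x}_n\}_X)_n$ in $C^0(\overline{I},X)$, exploiting finite-dimensionality of $X$ in two ways: weak convergence on $X$ coincides with strong convergence, and bounded sets in $X$ are relatively compact. The core tool is the integral representation from Proposition \ref{2.9} (i), which converts the bound on the derivative into a modulus of continuity.

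First I would treat \emph{compactness} (case $1<q\leq\infty$). Given a bounded sequence $(\boldsymbol{x}_n)_n\subseteq W^{1,p,q}(I,X)=W^{1,q}(I,X)$, the embedding in Proposition \ref{2.9} (i) yields boundedness of $(\{\boldsymbol{x}_n\}_X)_n$ in $C^0(\overline{I},X)$. The fundamental-theorem representation together with Hölder's inequality gives, for all $s\leq t$ in $\overline{I}$,
\begin{align*}
\|\{\boldsymbol{x}_n\}_X(t)-\{\boldsymbol{x}_n\}_X(s)\|_X\leq \int_s^t\Big\|\tfrac{d_X\boldsymbol{x}_n}{dt}(\tau)\Big\|_X d\tau\leq |t-s|^{1/q'}\Big\|\tfrac{d_X\boldsymbol{x}_n}{dt}\Big\|_{L^q(I,X)},
\end{align*}
which is equicontinuity (the case $q=\infty$ producing a uniform Lipschitz bound). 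Pointwise relative compactness is automatic by finite-dimensionality of $X$, so Arzelà--Ascoli delivers a $C^0(\overline{I},X)$-convergent subsequence. Since the choice function is already continuous (it is even an embedding), this establishes compactness.

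Next I would prove \emph{strong continuity} for all $1\leq q\leq \infty$. Take $\boldsymbol{x}_n\rightharpoonup\boldsymbol{x}$ in $W^{1,p,q}(I,X)$; in particular the sequence is bounded, and $\frac{d_X\boldsymbol{x}_n}{dt}\rightharpoonup\frac{d_X\boldsymbol{x}}{dt}$ in $L^q(I,X)$. For $1<q\leq\infty$ the previous step already furnishes a $C^0$-convergent subsequence. For $q=1$, weak convergence in $L^1(I,X)$ implies uniform integrability of $\big(\tfrac{d_X\boldsymbol{x}_n}{dt}\big)_n$ by the Dunford--Pettis theorem; applied to intervals $A=(s,t)$, this produces equicontinuity of $(\{\boldsymbol{x}_n\}_X)_n$, and Arzelà--Ascoli together with finite-dimensional pointwise compactness again yields a $C^0$-convergent subsequence. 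To identify the limit, I would use that for each fixed $t\in\overline{I}$ the evaluation $\boldsymbol{x}\mapsto\{\boldsymbol{x}\}_X(t)$ is a continuous linear map $W^{1,p,q}(I,X)\to X$ (composition of the embedding into $C^0(\overline{I},X)$ with point evaluation), hence weakly sequentially continuous; finite-dimensionality of $X$ then promotes $\{\boldsymbol{x}_n\}_X(t)\rightharpoonup\{\boldsymbol{x}\}_X(t)$ to strong convergence in $X$ for every $t\in\overline{I}$. Thus every $C^0$-accumulation point of $(\{\boldsymbol{x}_n\}_X)_n$ must coincide with $\{\boldsymbol{x}\}_X$, and the standard subsequence principle upgrades subsequential convergence to convergence of the whole sequence in $C^0(\overline{I},X)$.

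The main obstacle is the borderline case $q=1$: there no Hölder exponent is available, so the equicontinuity cannot be extracted directly from the derivative bound. The Dunford--Pettis characterization of weakly convergent sequences in $L^1$ is the right substitute, and it is precisely this tool that is unavailable for a merely bounded sequence in $L^1$, explaining why compactness requires $q>1$ while strong continuity still holds for $q=1$. All remaining steps are routine applications of Arzelà--Ascoli and finite-dimensionality.
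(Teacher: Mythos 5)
Your proof is correct, but it takes a genuinely different route from the paper's. You prove compactness first, via Arzelà--Ascoli: Hölder's inequality applied to the integral representation supplies equicontinuity for $1<q\leq\infty$, finite-dimensionality supplies pointwise relative compactness, and for $q=1$ you substitute a Dunford--Pettis argument. Strong continuity then follows by identifying the $C^0$-accumulation point with $\{\boldsymbol{x}\}_X$ via weak continuity of point evaluation plus finite-dimensionality, and the subsequence principle. The paper, by contrast, observes that for $1<q<\infty$ the reflexivity of $W^{1,q}(I,X)$ lets compactness be \emph{deduced} from strong continuity, and then proves strong continuity directly --- no Arzelà--Ascoli: after reducing to the origin by linearity, it picks a maximizer sequence $(t_n)$ of the $C^0$-norms, shows $\{\boldsymbol{x}_n\}_X(t_n)\rightharpoonup 0$ in $X$ by splitting $\int_0^{t_n}$ at a cluster point $t^*$ and invoking uniform integrability of the weakly $L^1$-convergent derivatives (cf.~\cite[Theorem 4.2]{BT38}), and upgrades to strong convergence by finite-dimensionality. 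Both arguments ultimately hinge on the same uniform integrability of the derivatives, which you invoke only at $q=1$; the paper uses it uniformly in $q$ via $L^q\hookrightarrow L^1$. One thing your approach buys: you cover compactness for $q=\infty$ directly, whereas the paper's ``reflexivity reduces compactness to strong continuity'' remark addresses only $1<q<\infty$ and leaves $q=\infty$ implicit (it can be recovered, e.g., by noting that $W^{1,\infty}$-bounded sets are $W^{1,2}$-bounded).
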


\begin{proof}
	Since for $1<q<\infty$ the space $W^{1,q}(I,X)$ is reflexive and thus strong continuity of $\{\cdot\}_X:W^{1,q}(I,X)\rightarrow C^0(\overline{I},X)$ implies compactness we only prove strong continuity.
	Due to the linearity it suffices to show the strong continuity in the origin 
	$\boldsymbol{0}\in W^{1,q}(I,X)$. To this end, we treat a sequence 
	$(\boldsymbol{x}_n)_{n\in\mathbb{N}}\subseteq W^{1,q}(I,X)$ which 
	weakly converges to $\boldsymbol{0}$ in $W^{1,q}(I,X)$. As the 
	operators $(\boldsymbol{x}\mapsto\{\boldsymbol{x}\}_X(0)):W^{1,q}(I,X)\rightarrow X$ 
	and 
	$\frac{d_X}{dt}:W^{1,q}(I,X)\rightarrow L^q(I,X)$ 
	are weakly continuous we obtain 
	\begin{align*}
	\{\boldsymbol{x}_n\}_X(0)\overset{n\rightarrow\infty}{\rightharpoonup}0
	\quad\text{ in }X\qquad\text{ and }\qquad
	\frac{d_X\boldsymbol{x}_n}{dt}\overset{n\rightarrow\infty}{\rightharpoonup}
	\boldsymbol{0}\quad\text{ in }L^q(I,X).
	\end{align*}
	Thanks to the compactness of $\overline{I}$ there exists a sequence 
	$(t_n)_{n\in\mathbb{N}}\subseteq \overline{I}$ which without loss of 
	generality converges to some $t^*\in\overline{I}$ 
	(otherwise, we switch to a subsequence and use the standard 
	convergence principle \cite[Kap. I, Lemma 5.4]{GGZ74} to obtain the assertion for the entire sequence) 
	such that for all $n\in\mathbb{N}$
	\begin{align*}
	\|\{\boldsymbol{x}_n\}_X\|_{C^0(\overline{I},X)}
	=\max_{t\in\overline{I}}{\|\{\boldsymbol{x}_n\}_X(t)\|_X}=\|\{\boldsymbol{x}_n\}_X(t_n)\|_X.
	\end{align*}
	Next, let us fix an arbitrary $x^*\in X^*$. Using Proposition \ref{2.4} (i), 
	we deduce for every $n\in \mathbb{N}$ that
	\begin{align*}
	\bigg\vert\bigg\langle x^*,\int_0^{t_n}{\frac{d_X\boldsymbol{x}_n}{dt}(s)\;ds}
	\bigg\rangle_{\!\!X}\bigg\vert&=
	\bigg\vert\bigg\langle x^*,\text{sgn}(t_n-t^*)\int_{\min\{t_n,t^*\}}^{\max\{t_n,t^*\}}
	{\frac{d_X\boldsymbol{x}_n}{dt}(s)\;ds}
	+\int_{0}^{t^*}{\frac{d_X\boldsymbol{x}_n}{dt}(s)\;ds}\bigg\rangle_X\bigg\vert\\&
	\leq\|x^*\|_{X^*}\int_{\min\{t_n,t^*\}}^{\max\{t_n,t^*\}}
	{\left\|\frac{d_X\boldsymbol{x}_n}{dt}(s)\right\|_X\!\!\!\!ds}+
	\bigg\vert\left\langle x^*\chi_{\left[0,t^*\right]},
	\frac{d_X\boldsymbol{x}_n}{dt}\right\rangle_{\!\!L^q(I,X)}\bigg\vert.
	\end{align*}
	As $(\frac{d_X\boldsymbol{x}_n}{dt})_{n\in\mathbb{N}}\subseteq L^1(I,X)$ weakly
	 converges to $\boldsymbol{0}$ in $L^1(I,X)$ and is thus uniformly integrable 
	 (cf.~\cite[Theorem 4.2]{BT38}), the right-hand side of the above inequality 
	 tends to zero as $n\rightarrow\infty$.
	This, the integral representation in Proposition \ref{2.9} (i) and the convergence
	 of the initial values yield
	\begin{align*}
	\{\boldsymbol{x}_n\}_X(t_n)=\{\boldsymbol{x}_n\}_X(0)+
	\int_0^{t_n}{\frac{d_X\boldsymbol{x}_n}{dt}(s)\;ds}
	\overset{n\rightarrow\infty}{\rightharpoonup}0\quad\text{ in }X.
	\end{align*} 
    As $X$ is finite dimensional the above convergence is actually strong, and we infer
    \begin{align*}
        \|\{\boldsymbol{x}_n\}_X\|_{C^0(\overline{I},X)}=
        \|\{\boldsymbol{x}_n\}_X(t_n)\|_X
        \overset{n\rightarrow\infty}{\rightarrow}0.\tag*{$\square$}
    \end{align*}
\end{proof}

\subsection{Evolution equations}
Let $(V,\|\cdot\|_V)$ be a reflexive Banach space, 
$(H,(\cdot,\cdot)_H)$ a Hilbert space
 and $j:V\rightarrow H$ an embedding such that $R(j)$ is dense in $H$.
Then the triple $(V,H,j)$ is said to be an \textbf{evolution triple}. 

Denote by $R:H\rightarrow H^*$ the Riesz isomorphism with 
respect to $(\cdot,\cdot)_H$. As $j$ is a dense embedding 
the adjoint $j^*:H^*\rightarrow V^*$ and therefore $e:=j^*Rj:V\rightarrow V^*$
 are embeddings as well. We call $e$ the 
 \textbf{canonical embedding} of $(V,H,j)$. Note that
\begin{align}
\langle ev,w\rangle_V=(jv,jw)_H\quad\text{ for all }v,w\in V.\label{eq:iden}
\end{align} 
For an evolution triple $(V,H,j)$ and $1<p<\infty$ we set
\begin{align*}
	\mathbfcal{X}:=L^p(I,V),\qquad\mathbfcal{W}:=
	W_e^{1,p,p'}(I,V,V^*),\qquad\mathbfcal{Y}:=C^0(\overline{I},H).
\end{align*}

\begin{prop}\label{2.11}
	Let $(V,H,j)$ be an evolution triple and $1< p<\infty$. 
	Then it holds $\boldsymbol{x}\in\mathbfcal{W}$ if and 
	only if $\boldsymbol{x}\in\mathbfcal{X}$ and there exists 
	$\boldsymbol{x}^*\in \mathbfcal{X}^*$ such that
	\begin{align*}
	-\int_I{(j(\boldsymbol{x}(s)),jv)_H\varphi^\prime(s)\;ds}=
	\int_I{\langle\boldsymbol{x}^*(s),v\rangle_V\varphi(s)\;ds}.
	\end{align*}
	for all $v\in V$ and $\varphi\in C_0^\infty(I)$. In this case 
	we have $\frac{d_e\boldsymbol{x}}{dt}=\boldsymbol{x}^*$ 
	in $\mathbfcal{X}^*$.
\end{prop}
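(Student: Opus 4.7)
The plan is to unwind the definition of $\frac{d_e\boldsymbol{x}}{dt}$ using the identity $\langle ev,w\rangle_V = (jv,jw)_H$ from \eqref{eq:iden}, and then use Proposition \ref{2.4}(i) to push the continuous linear map $e:V\to V^*$ inside the Bochner integrals. Since the result is an ``if and only if'' statement tying together a functional-analytic object $\boldsymbol{x}^*\in\mathbfcal{X}^*$ and a scalar-valued integration by parts, the whole game is essentially a reformulation, with the two directions being mirror images of one another. No real obstacle should arise; the only delicate point is keeping track of where equalities hold in $V^*$ versus in $H$ versus scalar, and tested against which class of functions.

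For the forward direction, I would take $\boldsymbol{x}\in\mathbfcal{W}$, so by definition there exists $\boldsymbol{x}^*:=\frac{d_e\boldsymbol{x}}{dt}\in L^{p'}(I,V^*)$ such that
\begin{align*}
e\!\left(-\int_I\boldsymbol{x}(s)\varphi'(s)\,ds\right)=\int_I\boldsymbol{x}^*(s)\varphi(s)\,ds\quad\text{in }V^*\quad\text{for all }\varphi\in C_0^\infty(I).
\end{align*}
Applying Proposition \ref{2.4}(i) to the linear continuous map $e:V\to V^*$, I can pull $e$ inside the integral on the left. Then testing both sides against an arbitrary $v\in V$ via the duality pairing $\langle\cdot,v\rangle_V$, commuting that pairing with the Bochner integral (which is legitimate because $\langle\cdot,v\rangle_V$ is linear and continuous, again Proposition \ref{2.4}(i) applied to the scalar target), and invoking \eqref{eq:iden} in the form $\langle e(\boldsymbol{x}(s)),v\rangle_V=(j(\boldsymbol{x}(s)),jv)_H$, yields exactly the claimed scalar identity.

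For the converse, I would assume $\boldsymbol{x}\in\mathbfcal{X}$ together with an $\boldsymbol{x}^*\in\mathbfcal{X}^*=L^{p'}(I,V^*)$ for which the scalar identity holds for every $v\in V$ and $\varphi\in C_0^\infty(I)$. Reading the computation of the first part in reverse, for each fixed $\varphi\in C_0^\infty(I)$ both elements
\begin{align*}
e\!\left(-\int_I\boldsymbol{x}(s)\varphi'(s)\,ds\right)\quad\text{and}\quad\int_I\boldsymbol{x}^*(s)\varphi(s)\,ds
\end{align*}
of $V^*$ agree when paired with an arbitrary $v\in V$, so they coincide in $V^*$. This is precisely the definition of a generalized time derivative with respect to $e$ in $L^{p'}(I,V^*)$, hence $\boldsymbol{x}\in\mathbfcal{W}$ and $\frac{d_e\boldsymbol{x}}{dt}=\boldsymbol{x}^*$ by uniqueness of the generalized derivative (the latter being the reason the same $\boldsymbol{x}^*$ in the statement is identified with the time derivative). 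The only care needed throughout is that the interchange of $e$, of $\langle\cdot,v\rangle_V$, and of the Bochner integrals is uniformly justified by Proposition \ref{2.4}(i), so no regularity hypothesis beyond the standing ones is required.
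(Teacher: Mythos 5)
Your argument is correct and is exactly the unwinding that underlies the cited result: the paper's proof simply defers to \cite[Proposition 23.20]{Zei90A} (noting the argument does not use separability), and that proof is the same reformulation you carry out — push the continuous linear map $e$ and the pairing $\langle\cdot,v\rangle_V$ through the Bochner integrals via Proposition~\ref{2.4}~(i), convert $\langle e(\boldsymbol{x}(s)),v\rangle_V$ to $(j(\boldsymbol{x}(s)),jv)_H$ via~\eqref{eq:iden}, and in the converse direction observe that agreement of the two $V^*$-elements when tested against every $v\in V$ forces equality in $V^*$, giving $\frac{d_e\boldsymbol{x}}{dt}=\boldsymbol{x}^*$ by uniqueness of the generalized derivative. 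The only implicit point, which you handle correctly, is the standard identification $\mathbfcal{X}^*\cong L^{p'}(I,V^*)$ (valid here since $V$ is reflexive and $1<p<\infty$), needed to make pointwise sense of $\langle\boldsymbol{x}^*(s),v\rangle_V$ and to interpret $\int_I\boldsymbol{x}^*(s)\varphi(s)\,ds$ as a Bochner integral in $V^*$.
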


\begin{proof}
	If $V$ is additionally separable, a proof can be found in 
	\cite[Proposition 23.20]{Zei90A}. As the argumentation remains true if we omit the separability of $V$, 
	we however refer to this proof.\hfill $\square$
\end{proof}

\begin{prop}\label{2.12}
	Let $(V,H,j)$ be an evolution triple and $1<p<\infty$. Then it holds:
	\begin{description}[{(ii)}]
		\item[{(i)}] Given $\boldsymbol{x}\in \mathbfcal{W}$ the function 
		$\boldsymbol{j}\boldsymbol{x}\in L^p(I,H)$, given via 
		$(\boldsymbol{j}\boldsymbol{x})(t):=j(\boldsymbol{x}(t))$ in $H$ for 
		almost every $t\in I$, possesses a unique representation in 
		$\mathbfcal{Y}$ and the resulting mapping 
		$\boldsymbol{j}:\mathbfcal{W}\rightarrow \mathbfcal{Y}$ is an embedding.
		In particular, the embedding 
		$\mathbfcal{W}\hookrightarrow \mathbfcal{X}\cap_{\boldsymbol{j}}\mathbfcal{Y}$
		 holds true.
		\item[{(ii)}] \textbf{Generalized integration by parts formula:} It holds
		\begin{align*}
		\int_{t'}^{t}{\left\langle
			\frac{d_e\boldsymbol{x}}{dt}(s),\boldsymbol{y}(s)\right\rangle_V\!ds}
		=\left[((\boldsymbol{j}\boldsymbol{x})(s), (\boldsymbol{j}
		\boldsymbol{y})(s))_H\right]^{s=t}_{s=t'}-\int_{t'}^{t}{\left\langle
			\frac{d_e\boldsymbol{y}}{dt}(s),\boldsymbol{x}(s)\right\rangle_V\!ds} 
		\end{align*}
		for all $\boldsymbol{x},\boldsymbol{y}\in \mathbfcal{W}$ 
		and $t,t'\in \overline{I}$ with $t'\leq t$.
	\end{description}  
\end{prop}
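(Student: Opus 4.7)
The plan is to handle part (i) by time-mollification to establish the continuous representative, and then use this together with density to obtain part (ii).

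For part (i), I would first show the density of $C^\infty(\overline{I},V)$ in $\mathbfcal{W}$: extend $\boldsymbol{x}\in\mathbfcal{W}$ to a slightly larger interval $(-\eta,T+\eta)$ by reflection at the endpoints (extending the derivative consistently), mollify in time with a standard scalar mollifier $\rho_\varepsilon\in C_0^\infty(\mathbb{R})$ to obtain $\boldsymbol{x}_\varepsilon\in C^\infty(\overline{I},V)$, and use Proposition~\ref{2.4}~(i) to see that $\frac{d_e}{dt}$ commutes with convolution, so that $\boldsymbol{x}_\varepsilon\to\boldsymbol{x}$ in $\mathbfcal{W}$. This step works without any separability assumption on $V$, since Bochner-integrable mollification is available in an arbitrary Banach space.

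Next, for smooth $\boldsymbol{x}_\varepsilon,\boldsymbol{y}_\varepsilon\in C^\infty(\overline{I},V)$ the classical product rule in $H$ together with the identity \eqref{eq:iden} yields
\begin{align*}
\frac{d}{dt}\bigl(j\boldsymbol{x}_\varepsilon(t),j\boldsymbol{y}_\varepsilon(t)\bigr)_H
=\bigl\langle e\boldsymbol{x}_\varepsilon'(t),\boldsymbol{y}_\varepsilon(t)\bigr\rangle_V
+\bigl\langle e\boldsymbol{y}_\varepsilon'(t),\boldsymbol{x}_\varepsilon(t)\bigr\rangle_V,
\end{align*}
and integration from $t'$ to $t$ gives the integration by parts formula for smooth functions. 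Specialising to $\boldsymbol{y}_\varepsilon=\boldsymbol{x}_\varepsilon$ produces the key a priori bound
\begin{align*}
\|j\boldsymbol{x}_\varepsilon(t)\|_H^2\le \|j\boldsymbol{x}_\varepsilon(s)\|_H^2+2\|\boldsymbol{x}_\varepsilon\|_{L^p(I,V)}\Bigl\|\tfrac{d_e\boldsymbol{x}_\varepsilon}{dt}\Bigr\|_{L^{p'}(I,V^*)},
\end{align*}
which, after averaging in $s\in I$ to control $\|j\boldsymbol{x}_\varepsilon(s)\|_H^2$ by $\|\boldsymbol{x}_\varepsilon\|_{L^p(I,V)}^2$ via the continuous embedding $V\hookrightarrow H$ induced by $j$, yields $\|j\boldsymbol{x}_\varepsilon\|_{C^0(\overline{I},H)}\le C\,\|\boldsymbol{x}_\varepsilon\|_{\mathbfcal{W}}$. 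Applying this same estimate to differences $\boldsymbol{x}_\varepsilon-\boldsymbol{x}_\delta$ shows that $(j\boldsymbol{x}_\varepsilon)_{\varepsilon>0}$ is Cauchy in $\mathbfcal{Y}$; its limit is the unique $\mathbfcal{Y}$-representative of $\boldsymbol{j}\boldsymbol{x}\in L^p(I,H)$, and passing to the limit in the a priori estimate gives continuity of $\boldsymbol{j}:\mathbfcal{W}\to\mathbfcal{Y}$. Injectivity is inherited from the injectivity of $j$, so $\boldsymbol{j}$ is an embedding. The chain $\mathbfcal{W}\hookrightarrow\mathbfcal{X}\cap_{\boldsymbol{j}}\mathbfcal{Y}$ then follows directly from the pull-back construction in Remark~\ref{2.5}.

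For part (ii), I would approximate $\boldsymbol{x},\boldsymbol{y}\in\mathbfcal{W}$ by smooth $\boldsymbol{x}_\varepsilon,\boldsymbol{y}_\varepsilon\in C^\infty(\overline{I},V)$ as above and pass to the limit in the integration by parts identity for the smooth approximations: the two duality integrals on the right-hand side converge because $\boldsymbol{x}_\varepsilon\to\boldsymbol{x}$ in $L^p(I,V)$ with $\frac{d_e\boldsymbol{x}_\varepsilon}{dt}\to \frac{d_e\boldsymbol{x}}{dt}$ in $L^{p'}(I,V^*)$ (and analogously for $\boldsymbol{y}_\varepsilon$), while the boundary term $\bigl((\boldsymbol{j}\boldsymbol{x}_\varepsilon)(s),(\boldsymbol{j}\boldsymbol{y}_\varepsilon)(s)\bigr)_H$ converges pointwise on $\overline{I}$ thanks to the just-established continuity $\mathbfcal{W}\hookrightarrow\mathbfcal{Y}$.

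The main obstacle is the mollification/extension step: I need $\boldsymbol{x}_\varepsilon\to\boldsymbol{x}$ in $\mathbfcal{W}$ without invoking separability of $V$ (so that the duality-based proofs relying on countable dense sets are unavailable). The remedy is to use reflection at the endpoints combined with convolution against a scalar mollifier, observing that commutation of $\frac{d_e}{dt}$ with the scalar convolution is a direct consequence of Proposition~\ref{2.4}~(i). Everything else is standard manipulation once this density is secured.
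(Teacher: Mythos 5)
The paper itself supplies no proof here; it simply refers the reader to \cite[Chapter III.1, Proposition 1.2]{Sho97}, and your argument reproduces the standard density--and--a~priori--estimate proof one finds there and in comparable references. The overall structure — mollify to obtain $C^\infty(\overline{I},V)$-approximations, derive the integration-by-parts identity for smooth functions from the classical product rule together with $\langle ev,w\rangle_V=(jv,jw)_H$, obtain the estimate $\|\boldsymbol{j}\boldsymbol{x}_\varepsilon\|_{C^0(\overline{I},H)}\leq C\|\boldsymbol{x}_\varepsilon\|_{\mathbfcal{W}}$, conclude the family is Cauchy in $\mathbfcal{Y}$, and pass to the limit in the formula — is the right one, and your observation that the mollification step works without any separability of $V$ is correct.

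Two points, however, need more care than you give them. First, as literally written the ``averaging in $s\in I$'' step fails when $p<2$: averaging $\|j\boldsymbol{x}_\varepsilon(s)\|_H^2$ over $s$ produces $T^{-1}\|\boldsymbol{j}\boldsymbol{x}_\varepsilon\|_{L^2(I,H)}^2$, and on a bounded interval the $L^2(I,V)$-norm is \emph{not} dominated by the $L^p(I,V)$-norm when $p<2$, so the continuous embedding $V\hookrightarrow H$ does not give what you claim. The fix is easy and does not change the strategy: either average the $p$-th powers, or pick a single $s^*\in I$ with $\|j\boldsymbol{x}_\varepsilon(s^*)\|_H^p\leq T^{-1}\int_I\|j\boldsymbol{x}_\varepsilon(s)\|_H^p\,ds\leq C^pT^{-1}\|\boldsymbol{x}_\varepsilon\|_{L^p(I,V)}^p$ (such an $s^*$ exists since a measurable function cannot exceed its average everywhere), plug this particular $s^*$ into the inequality, and then take the supremum over $t\in\overline{I}$. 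Second, the claim that the even reflection of $\boldsymbol{x}\in\mathbfcal{W}$ lies in the corresponding Bochner--Sobolev space on $(-\eta,T+\eta)$ requires a short additional argument: after changing variables the resulting test function $\psi(t)=\varphi(t)-\varphi(-t)$ vanishes at $t=0$ but is not compactly supported in $(0,T)$, so one has to cut it off near $0$ and pass to the limit using the absolute continuity of $t\mapsto\int_0^t\|\boldsymbol{x}(s)\|_V\,ds$. Both repairs are routine and once made your proof is complete and consistent with the cited source.
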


\begin{proof}
See \cite[Chapter III.1, Proposition 1.2]{Sho97}.\hfill$\square$
\end{proof}

\begin{defn}[Evolution equation]\label{2.14}
	Let $(V,H,j)$ be an evolution triple and $1<p< \infty$. Furthermore,
	 let $\boldsymbol{y}_0\in H$ be an initial value, 
	 $\boldsymbol{f}\in\mathbfcal{X}^*$ a right-hand side and 
	 $\mathbfcal{A}:\mathbfcal{X}\cap_{\boldsymbol{j}}
	 \mathbfcal{Y}\rightarrow\mathbfcal{X}^*$ 
	 an operator. Then the initial value problem
	\begin{align}
	\begin{split}
	\begin{alignedat}{2}
	\frac{d_e\boldsymbol{y}}{dt}+\mathbfcal{A}\boldsymbol{y}
	&=\boldsymbol{f}&&\quad\text{ in }\mathbfcal{X}^*,\\
	(\boldsymbol{j}\boldsymbol{y})(0)&=\boldsymbol{y}_0&&\quad\text{ in }H
	\end{alignedat}
	\end{split}\label{eq:1}
	\end{align}
	is said to be an \textbf{evolution equation}. The initial condition has to be understood in the sense of the unique continuous representation $\boldsymbol{j}\boldsymbol{y}\in C^0(\overline{I},H)$ (cf. Proposition \ref{2.12} (i)).
\end{defn}

\section{Notions of continuity and growth for evolution equations}
\label{sec:3}

In \cite{KR19} Bochner pseudo-monotonicity and Bochner coercivity has been introduced as appropriate notions of continuity and growth for evolution equations, as they
operate on the same energy space $\mathbfcal{X}\cap_{\boldsymbol{j}}L^\infty(I,H)$ and thus are more on balance than $\frac{d}{dt}$-pseudo-monotonicity and coercivity. 
We emphasize that Bochner pseudo-monotonicity was not directly defined with respect to the weak sequential topology of $\mathbfcal{X}\cap_{\boldsymbol{j}}L^\infty(I,H)$. Indeed,
\cite[Remark 3.5]{Tol18} (see also Remark \ref{rmk} below) gives an example of a sequence $(\boldsymbol{x}_n)_{n\in\mathbb{N}}\subseteq 
\mathbfcal{X}\cap_{\boldsymbol{j}}L^\infty(I,H)$, which satisfies \eqref{eq:1.3} and \eqref{eq:1.4}, but does not weakly converge in $L^\infty(I,H)$, wherefore one cannot simply replace \eqref{eq:1.3} and \eqref{eq:1.4} by weak convergence in $L^\infty(I,H)$ in the definition of Bochner pseudo-monotonicity (cf. Definition \ref{1.3}). However, according to Proposition \ref{2.3}, weak convergence in $\mathbfcal{Y}$ is equivalent to \eqref{eq:1.3} together with \eqref{eq:1.4} valid, not just for almost every, but for all $t\in\overline{I}$. This motivates generalizations of Bochner pseudo-monotonicity, which incorporate the weak sequential topology of $\mathbfcal{X}\cap_{\boldsymbol{j}}\mathbfcal{Y}$.

\begin{defn}[$C^0$-Bochner pseudo-monotonicity 
	and $C^0$-Bochner condition (M)]\label{3.1}
	Let $(V,H,j)$ be an evolution triple and $1<p<\infty$. 
	An operator $\mathbfcal{A}:\mathbfcal{X}\cap_{\boldsymbol{j}}\mathbfcal{Y}
	\rightarrow\mathbfcal{X}^*$ is said to be 
	\begin{description}[{(ii)}]
		\item[{(i)}] \textbf{$C^0$-Bochner pseudo-monotone}, if 
		for a sequence $(\boldsymbol{x}_n)_{n\in\mathbb{N}}\subseteq 
		\mathbfcal{X}\cap_{\boldsymbol{j}}\mathbfcal{Y}$ from
		\begin{align}
		\boldsymbol{x}_n\overset{n\rightarrow\infty}{\rightharpoonup}\boldsymbol{x}
		\quad\text{ in }\mathbfcal{X}\cap_{\boldsymbol{j}}\mathbfcal{Y}&,\label{eq:3.2}\\
		\limsup_{n\rightarrow\infty}{\langle \mathbfcal{A}\boldsymbol{x}_n,
			\boldsymbol{x}_n-\boldsymbol{x}\rangle_{\mathbfcal{X}}}\leq 0\label{eq:3.3}&
		\end{align}
		it follows that $\langle\mathbfcal{A}\boldsymbol{x},
		\boldsymbol{x}-\boldsymbol{y}\rangle_{\mathbfcal{X}}
		\leq	\liminf_{n\rightarrow\infty}{\langle \mathbfcal{A}\boldsymbol{x}_n,
			\boldsymbol{x}_n-\boldsymbol{y}\rangle_{\mathbfcal{X}}}$ for all 
		$\boldsymbol{y}\in \mathbfcal{X}$.
		\item[{(ii)}] satisfying the \textbf{$C^0$-Bochner condition (M)}, if 
		for a sequence $(\boldsymbol{x}_n)_{n\in\mathbb{N}}\subseteq 
		\mathbfcal{X}\cap_{\boldsymbol{j}}\mathbfcal{Y}$ from \eqref{eq:3.2},
		\begin{align}
		\mathbfcal{A}\boldsymbol{x}_n\overset{n\rightarrow\infty}{\rightharpoonup}
		\boldsymbol{\xi}\quad\text{ in }\mathbfcal{X}^*&,\label{eq:3.4}\\
		\limsup_{n\rightarrow\infty}{\langle \mathbfcal{A}\boldsymbol{x}_n,
			\boldsymbol{x}_n\rangle_{\mathbfcal{X}}}\leq \langle\boldsymbol{\xi},
		\boldsymbol{x}\rangle_{\mathbfcal{X}}&\label{eq:3.5}
		\end{align}
		it follows that $\mathbfcal{A}\boldsymbol{x}=\boldsymbol{\xi}$ in $\mathbfcal{X}^*$.
	\end{description}
\end{defn}

\begin{rmk}[$C^0$-Bochner pseudo-monotonicity $\not\Rightarrow$ Bochner pseudo-monotonicity]\label{rmk}\newline
	Clearly, Bochner pseudo-monotonicity implies $C^0$-Bochner pseudo-monotonicity. This is an immediate consequence of Proposition \ref{2.3}.
	Note that the converse is not true in general. In fact, there exist $C^0$-Bochner pseudo-monotone operators which are not Bochner pseudo-monotone. This can be seen by the following 
    example (cf.~\cite[Remark 3.5]{Tol18}). 
    
    Let $I=\left(-1,1\right)$, $p\in\left(1,\infty\right)$, $V=H=\mathbb{R}$ and 
    $\mathbfcal{A}:L^\infty(I,\mathbb{R})\to L^{p'}(I,\mathbb{R})$ 
    given via $\mathbfcal{A}\boldsymbol{x}:=\langle\boldsymbol{\omega},
    \boldsymbol{x}\rangle_{L^\infty(I,\mathbb{R})}
    =\int_I{\boldsymbol{x}(s)d\boldsymbol{\omega}(s)}$
	 for all $\boldsymbol{x}\in L^\infty(I,\mathbb{R})$, where 
	 $\boldsymbol{\omega}\in (L^\infty(I,\mathbb{R}))^*$ is a finitely additive 
	 measure with $\boldsymbol{\omega}(\left(-1/2n,0\right)\cup\left(0,1/2n\right))=1$ 
	 for all $n\in \mathbb{N}$, whose existence is guaranteed in \cite[Theorem 2.9]{Tol18}. We define 
	 $(\boldsymbol{x}_n)_{n\in \mathbb{N}}\subseteq L^\infty(I,\mathbb{R})$ 
	 by $\boldsymbol{x}_n(0):=0$, $\boldsymbol{x}_n(t):=0$ if 
	 $\vert t\vert\ge 2/n$, $\boldsymbol{x}_n(t):=1$ if $0< \vert t\vert\leq 1/n$, 
	 and $\boldsymbol{x}_n(t):=-n \vert t\vert +2$ if $1/n<\vert t\vert< 2/n$. One easily sees, that $(\boldsymbol{x}_n)_{n\in \mathbb{N}}\subseteq L^\infty(I,\mathbb{R})$ with $\sup_{n\in\mathbb{N}}\|\boldsymbol{x}_n\|_{L^\infty(I,\mathbb{R})}\leq 1$ and $\boldsymbol{x}_n(t)\overset{n\to\infty}{\to }0$ for every $t\in I$, which immediately implies that $\boldsymbol{x}_n\overset{\ast}{\rightharpoondown}\boldsymbol{0}\text{ in }L^\infty(I,\mathbb{R})$ $(n\to\infty)$ and $\boldsymbol{x}_n\overset{n\to \infty}{\to}\boldsymbol{0} \text{ in }L^p(I,\mathbb{R})$. Apart from that, according to \cite[Theorem 2.8]{Tol18}, we have $\mathbfcal{A}\boldsymbol{x}_n=\langle\boldsymbol{\omega},
	 \boldsymbol{x}_n\rangle_{L^\infty(I,\mathbb{R})}=1$ for all $n\in \mathbb{N}$, which let us exclude that $\boldsymbol{x}_n\not\rightharpoonup \boldsymbol{0}$ in 
	 $L^\infty(I,\mathbb{R})$ $(n\to\infty)$ and provides that $ \limsup_{n\rightarrow\infty}{\langle\mathbfcal{A}\boldsymbol{x}_n,
	 	\boldsymbol{x}_n-\boldsymbol{0}\rangle_{L^p(I,\mathbb{R})}}= \lim_{n\rightarrow\infty}{\int_I{\boldsymbol{x}_n(s)ds}}=0$.
	 Overall, $(\boldsymbol{x}_n)_{n\in \mathbb{N}}\subseteq L^\infty(I,\mathbb{R})$ satisfies \eqref{eq:1.2}--\eqref{eq:1.5}, but $\liminf_{n\rightarrow\infty}{\langle\mathbfcal{A}\boldsymbol{x}_n,
	\boldsymbol{x}_n-\boldsymbol{1}\rangle_{L^p(I,\mathbb{R})}}=-2<0=\langle\mathbfcal{A}\boldsymbol{0},
\boldsymbol{0}-\boldsymbol{1}\rangle_{L^p(I,\mathbb{R})}$, i.e.,
 $\mathbfcal{A}:L^\infty(I,\mathbb{R})\to L^{p'}(I,\mathbb{R})$ cannot be Bochner 
 pseudo-monotone. However, if $(\boldsymbol{x}_n)_{n\in\mathbb{N}}\subseteq C^0(\overline{I},\mathbb{R})$ is a sequence satisfying \eqref{eq:3.2} and \eqref{eq:3.3}, then $\mathbfcal{A}\boldsymbol{x}_n\overset{n\to\infty}{\to }\mathbfcal{A}\boldsymbol{x}$ in $\mathbb{R}$, as also $\boldsymbol{\omega}\in (C^0(\overline{I},\mathbb{R}))^*$ and therefore
 $\langle\mathbfcal{A}\boldsymbol{x},
 \boldsymbol{x}-\boldsymbol{y}\rangle_{L^p(I,\mathbb{R})}\leq \liminf_{n\rightarrow\infty}{\langle\mathbfcal{A}\boldsymbol{x}_n,
 	\boldsymbol{x}_n-\boldsymbol{y}\rangle_{L^p(I,\mathbb{R})}}$ for any $\boldsymbol{y}\in L^p(I,\mathbb{R})$. In other words, 
 $\mathbfcal{A}:L^\infty(I,\mathbb{R})\to L^{p'}(I,\mathbb{R})$ is $C^0$-Bochner pseudo-monotone.
\end{rmk}

\begin{prop}\label{3.6}
	Let $(V,H,j)$ be an evolution triple and $1<p<\infty$. Then it holds:
	\begin{description}[{(ii)}]
        \item[{(i)}] If $\mathbfcal{A}:\mathbfcal{X}\cap_{\boldsymbol{j}}\mathbfcal{Y}
        \rightarrow\mathbfcal{X}^*$ is $C^0$-Bochner pseudo-monotone, 
        then it satisfies the $C^0$-Bochner condition (M).
		\item[{(ii)}] If $\mathbfcal{A}:\mathbfcal{X}\cap_{\boldsymbol{j}}\mathbfcal{Y}
		\rightarrow\mathbfcal{X}^*$ is locally bounded and satisfies 
		the $C^0$-Bochner condition (M), then it is demi-continuous.
	\end{description}
\end{prop}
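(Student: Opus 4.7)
\textbf{Proof proposal for Proposition \ref{3.6}.}

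For part (i), my plan is to follow the classical pattern showing that pseudo-monotonicity implies condition (M), now adapted to the ambient intersection space. Assume $\mathbfcal{A}$ is $C^0$-Bochner pseudo-monotone and take a sequence $(\boldsymbol{x}_n)_{n\in\mathbb{N}}\subseteq \mathbfcal{X}\cap_{\boldsymbol{j}}\mathbfcal{Y}$ satisfying \eqref{eq:3.2}, \eqref{eq:3.4} and \eqref{eq:3.5}. Since the canonical projection $\mathbfcal{X}\cap_{\boldsymbol{j}}\mathbfcal{Y}\to\mathbfcal{X}$ is continuous, \eqref{eq:3.2} yields $\boldsymbol{x}_n\rightharpoonup\boldsymbol{x}$ in $\mathbfcal{X}$, so combining with \eqref{eq:3.4} we have $\langle\mathbfcal{A}\boldsymbol{x}_n,\boldsymbol{x}\rangle_{\mathbfcal{X}}\to\langle\boldsymbol{\xi},\boldsymbol{x}\rangle_{\mathbfcal{X}}$. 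Together with \eqref{eq:3.5} this gives
\begin{align*}
\limsup_{n\rightarrow\infty}\langle\mathbfcal{A}\boldsymbol{x}_n,\boldsymbol{x}_n-\boldsymbol{x}\rangle_{\mathbfcal{X}}\leq \langle\boldsymbol{\xi},\boldsymbol{x}\rangle_{\mathbfcal{X}}-\langle\boldsymbol{\xi},\boldsymbol{x}\rangle_{\mathbfcal{X}}=0,
\end{align*}
so \eqref{eq:3.3} is verified. Applying $C^0$-Bochner pseudo-monotonicity to an arbitrary $\boldsymbol{y}\in\mathbfcal{X}$, and using that $\langle\mathbfcal{A}\boldsymbol{x}_n,\boldsymbol{y}\rangle_{\mathbfcal{X}}\to\langle\boldsymbol{\xi},\boldsymbol{y}\rangle_{\mathbfcal{X}}$, I obtain
\begin{align*}
\langle\mathbfcal{A}\boldsymbol{x},\boldsymbol{x}-\boldsymbol{y}\rangle_{\mathbfcal{X}}\leq\liminf_{n\rightarrow\infty}\langle\mathbfcal{A}\boldsymbol{x}_n,\boldsymbol{x}_n-\boldsymbol{y}\rangle_{\mathbfcal{X}}\leq\limsup_{n\rightarrow\infty}\langle\mathbfcal{A}\boldsymbol{x}_n,\boldsymbol{x}_n\rangle_{\mathbfcal{X}}-\langle\boldsymbol{\xi},\boldsymbol{y}\rangle_{\mathbfcal{X}}\leq\langle\boldsymbol{\xi},\boldsymbol{x}-\boldsymbol{y}\rangle_{\mathbfcal{X}}.
\end{align*}
Since $\boldsymbol{x}-\boldsymbol{y}$ runs through all of $\mathbfcal{X}$ as $\boldsymbol{y}$ does, replacing $\boldsymbol{y}$ by $\boldsymbol{x}\pm t\boldsymbol{z}$ for $\boldsymbol{z}\in\mathbfcal{X}$ and $t>0$ and sending $t\searrow 0$ (or just invoking that $\langle\mathbfcal{A}\boldsymbol{x}-\boldsymbol{\xi},\boldsymbol{z}\rangle_{\mathbfcal{X}}\leq 0$ for every $\boldsymbol{z}\in\mathbfcal{X}$) yields $\mathbfcal{A}\boldsymbol{x}=\boldsymbol{\xi}$ in $\mathbfcal{X}^*$.

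For part (ii), the plan is a standard subsequence-and-reflexivity argument. Let $\boldsymbol{x}_n\to\boldsymbol{x}$ strongly in $\mathbfcal{X}\cap_{\boldsymbol{j}}\mathbfcal{Y}$; I need $\mathbfcal{A}\boldsymbol{x}_n\rightharpoonup\mathbfcal{A}\boldsymbol{x}$ in $\mathbfcal{X}^*$. By local boundedness of $\mathbfcal{A}$, eventually $(\mathbfcal{A}\boldsymbol{x}_n)_{n\in\mathbb{N}}$ is bounded in $\mathbfcal{X}^*$, which is reflexive because $\mathbfcal{X}=L^p(I,V)$ is reflexive for $1<p<\infty$ and $V$ reflexive. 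By the standard convergence principle (cf.\ \cite[Kap.\ I, Lemma 5.4]{GGZ74}), it suffices to show that every subsequence of $(\mathbfcal{A}\boldsymbol{x}_n)_{n\in\mathbb{N}}$ admits a sub-subsequence converging weakly to $\mathbfcal{A}\boldsymbol{x}$. Extract from any subsequence (not relabeled) a weakly convergent sub-subsequence $\mathbfcal{A}\boldsymbol{x}_{n_k}\rightharpoonup\boldsymbol{\xi}$ in $\mathbfcal{X}^*$. The strong convergence gives \eqref{eq:3.2}, and the decomposition
\begin{align*}
\langle\mathbfcal{A}\boldsymbol{x}_{n_k},\boldsymbol{x}_{n_k}\rangle_{\mathbfcal{X}}=\langle\mathbfcal{A}\boldsymbol{x}_{n_k},\boldsymbol{x}_{n_k}-\boldsymbol{x}\rangle_{\mathbfcal{X}}+\langle\mathbfcal{A}\boldsymbol{x}_{n_k},\boldsymbol{x}\rangle_{\mathbfcal{X}},
\end{align*}
together with boundedness of $(\mathbfcal{A}\boldsymbol{x}_{n_k})$ in $\mathbfcal{X}^*$ and $\boldsymbol{x}_{n_k}\to\boldsymbol{x}$ in $\mathbfcal{X}$, yields $\langle\mathbfcal{A}\boldsymbol{x}_{n_k},\boldsymbol{x}_{n_k}\rangle_{\mathbfcal{X}}\to\langle\boldsymbol{\xi},\boldsymbol{x}\rangle_{\mathbfcal{X}}$, i.e.\ \eqref{eq:3.5} holds (with equality). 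The $C^0$-Bochner condition (M) then gives $\boldsymbol{\xi}=\mathbfcal{A}\boldsymbol{x}$, which completes the argument.

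I do not expect a serious obstacle in either part; both are structurally routine once the topology on $\mathbfcal{X}\cap_{\boldsymbol{j}}\mathbfcal{Y}$ is handled correctly. The only thing to double-check is the compatibility of weak convergence in $\mathbfcal{X}\cap_{\boldsymbol{j}}\mathbfcal{Y}$ with weak convergence in $\mathbfcal{X}$ (used in (i) via the continuous projection to $\mathbfcal{X}$) and the transition from strong convergence in $\mathbfcal{X}\cap_{\boldsymbol{j}}\mathbfcal{Y}$ to local boundedness of $\mathbfcal{A}\boldsymbol{x}_n$ in (ii); both are immediate from the definition of the intersection norm and Proposition~\ref{2.6}.
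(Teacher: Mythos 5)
Your proposal is correct and follows essentially the same route as the paper's proof: in (i) you first deduce \eqref{eq:3.3} from \eqref{eq:3.4} and \eqref{eq:3.5}, then pass through $C^0$-Bochner pseudo-monotonicity and a $\liminf$/$\limsup$ split to obtain $\langle\mathbfcal{A}\boldsymbol{x}-\boldsymbol{\xi},\boldsymbol{x}-\boldsymbol{y}\rangle_{\mathbfcal{X}}\leq 0$ for all $\boldsymbol{y}\in\mathbfcal{X}$; in (ii) you use local boundedness, reflexivity of $\mathbfcal{X}^*$, the $C^0$-Bochner condition (M) on subsequences, and the convergence principle from \cite[Kap.\ I, Lemma 5.4]{GGZ74}. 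The only cosmetic difference is the exact decomposition of $\langle\mathbfcal{A}\boldsymbol{x}_n,\boldsymbol{x}_n-\boldsymbol{y}\rangle_{\mathbfcal{X}}$ in (i) (the paper splits as $(\boldsymbol{x}_n-\boldsymbol{x})+(\boldsymbol{x}-\boldsymbol{y})$, you split as $\boldsymbol{x}_n-\boldsymbol{y}$ directly against \eqref{eq:3.5}), which is immaterial.
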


\begin{proof}
\textbf{ad (i)} Let $(\boldsymbol{x}_n)_{n\in\mathbb{N}}\subseteq
 \mathbfcal{X}\cap_{\boldsymbol{j}}\mathbfcal{Y}$ be a sequence 
 satisfying \eqref{eq:3.2}, \eqref{eq:3.4} and \eqref{eq:3.5}.
  In particular, \eqref{eq:3.4} and \eqref{eq:3.5} imply \eqref{eq:3.3}. 
  The $C^0$-Bochner pseudo-monotonicity of $\mathbfcal{A}:
  \mathbfcal{X}\cap_{\boldsymbol{j}}\mathbfcal{Y}\rightarrow
  \mathbfcal{X}^*$, \eqref{eq:3.3} and \eqref{eq:3.4} thus imply
\begin{align*}
\langle\mathbfcal{A}\boldsymbol{x},\boldsymbol{x}-\boldsymbol{y}\rangle_{\mathbfcal{X}}
&\leq \liminf_{n\rightarrow\infty}{\langle\mathbfcal{A}\boldsymbol{x}_n,
	\boldsymbol{x}_n-\boldsymbol{y}\rangle_{\mathbfcal{X}}}\\&
\leq \limsup_{n\rightarrow\infty}{\langle\mathbfcal{A}\boldsymbol{x}_n,
	\boldsymbol{x}_n-\boldsymbol{x}\rangle_{\mathbfcal{X}}}+
\limsup_{n\rightarrow\infty}{\langle\mathbfcal{A}\boldsymbol{x}_n,
	\boldsymbol{x}-\boldsymbol{y}\rangle_{\mathbfcal{X}}}\leq
 \langle\boldsymbol{\xi},\boldsymbol{x}-\boldsymbol{y}\rangle_{\mathbfcal{X}}
\end{align*}
for all $\boldsymbol{y}\in \mathbfcal{X}$ and therefore 
$\mathbfcal{A}\boldsymbol{x}=\boldsymbol{\xi}$ in $\mathbfcal{X}^*$.

\textbf{ad (ii)} Let $(\boldsymbol{x}_n)_{n\in\mathbb{N}}\subseteq 
\mathbfcal{X}\cap_{\boldsymbol{j}}\mathbfcal{Y}$ be a sequence 
such that $\boldsymbol{x}_n\overset{n\rightarrow\infty}{\rightarrow}\boldsymbol{x}$
 in $\mathbfcal{X}\cap_{\boldsymbol{j}}\mathbfcal{Y}$. From the locally
 boundedness of
$\mathbfcal{A}:\mathbfcal{X}\cap_{\boldsymbol{j}}\mathbfcal{Y}\rightarrow\mathbfcal{X}^*$ 
and reflexivity of $\mathbfcal{X}^*$
we obtain a subsequence
$(\mathbfcal{A}\boldsymbol{x}_n)_{n\in\Lambda}\subseteq
\mathbfcal{X}^*$, with $\Lambda\subseteq\mathbb{N}$, and
$\boldsymbol{\xi}\in \mathbfcal{X}^*$ such that
$\mathbfcal{A}\boldsymbol{x}_n\overset{n\rightarrow\infty}
{\rightharpoonup}\boldsymbol{\xi}$
in $\mathbfcal{X}^*$ $(n\in\Lambda)$. Hence, it holds 
$\langle\mathbfcal{A}\boldsymbol{x}_n,\boldsymbol{x}_n\rangle_{\mathbfcal{X}}
\overset{n\rightarrow\infty}{\rightarrow}\langle \boldsymbol{\xi},
\boldsymbol{x}\rangle_{\mathbfcal{X}}$ $(n\in \Lambda)$, i.e., \eqref{eq:3.5} 
with respect to $\Lambda$. From the $C^0$-Bochner condition (M) we conclude 
$\mathbfcal{A}\boldsymbol{x}=\boldsymbol{\xi}$ in $\mathbfcal{X}^*$. 
As this argumentation stays valid for each subsequence of 
$(\boldsymbol{x}_n)_{n\in\mathbb{N}}\subseteq\mathbfcal{X}\cap_{\boldsymbol{j}}\mathbfcal{Y}$, 
$\mathbfcal{A}\boldsymbol{x}\in\mathbfcal{X}^*$ is weak accumulation 
point of each subsequence of 
${(\mathbfcal{A}\boldsymbol{x}_n)_{n\in\mathbb{N}}\subseteq\mathbfcal{X}^*}$. The 
standard convergence principle (cf.~\cite[Kap. I, Lemma 5.4]{GGZ74}) finally yields 
$\mathbfcal{A}\boldsymbol{x}_n\overset{n\rightarrow\infty}{\rightharpoonup} 
\mathbfcal{A}\boldsymbol{x}\text{ in }\mathbfcal{X}^* $.\hfill$\square$
\end{proof}

\begin{rmk}[$C^0$-Bochner condition (M) $\not\Rightarrow$ $C^0$-Bochner pseudo-monotonicity]\label{rmk2}\newline
	According to Proposition \ref{3.6} (i) $C^0$-Bochner pseudo-monotonicity implies the $C^0$-Bochner condition (M). But note that there exists operators satisfying the $C^0$-Bochner condition (M) without being $C^0$-Bochner pseudo-monotone. 
	
	For example, let $I=\left(0,T\right)$, with $0<T<\infty$, $p\in \left(1,\infty\right)$, $V=H$ a separable Hilbert space with orthonormal basis $(e_n)_{n\in\mathbb{N}}\subseteq H$ and Riesz isomorphism $R:H\to H^*$. Moreover, let $\mathbfcal{A}:C^0(\overline{I},H)\to L^{p'}(I,H^*)$ be given via $(\mathbfcal{A}\boldsymbol{x})(t)=-R(\boldsymbol{x}(t))$ in $H^*$ for almost every $t\in I$ and all $\boldsymbol{x}\in C^0(\overline{I},H)$. Then, $\mathbfcal{A}:C^0(\overline{I},H)\to L^{p'}(I,H^*)$ satisfies the $C^0$-Bochner condition (M), as it is weakly continuous, but is not $C^0$-Bochner pseudo-monotone. In fact, the sequence $(\boldsymbol{x}_n)_{n\in\mathbb{N}}\subseteq C^0(\overline{I},H)$, given via $\boldsymbol{x}_n\equiv e_n$ for every $n\in \mathbb{N}$, satisfies $\boldsymbol{x}_n\overset{n\to\infty}{\rightharpoonup}\boldsymbol{0}$ in $C^0(\overline{I},H)$ and $\limsup_{n\to\infty}{\langle\mathbfcal{A}\boldsymbol{x}_n,\boldsymbol{x}_n-\boldsymbol{0}\rangle_{ L^p(I,H)}}=-T<0$, but $\liminf_{n\to\infty}{\langle\mathbfcal{A}\boldsymbol{x}_n,\boldsymbol{x}_n-\boldsymbol{y}\rangle_{ L^p(I,H)}}=-T<0=\langle\mathbfcal{A}\boldsymbol{0},\boldsymbol{0}-\boldsymbol{y}\rangle_{ L^p(I,H)}$ for any $\boldsymbol{y}\in L^p(I,H)$. 
\end{rmk}

\begin{defn}[$C^0$-Bochner coercivity]\label{3.7}
	Let $(V,H,j)$ be an evolution triple and $1<p<\infty$.
	An operator $\mathbfcal{A}:\mathbfcal{X}\cap_{\boldsymbol{j}}\mathbfcal{Y}\rightarrow\mathbfcal{X}^*$
	 is said to be
	\begin{description}[{(ii)}]
		\item[{(i)}] \textbf{$C^0$-Bochner coercive with respect to $\boldsymbol{f}\in\mathbfcal{X}^*$ and $\boldsymbol{x}_0\in H$}, 
		if there exists a constant $M:=M(\boldsymbol{f},\boldsymbol{x}_0,\mathbfcal{A})>0$ 
		such that for all $\boldsymbol{x}\in\mathbfcal{X}\cap_{\boldsymbol{j}}\mathbfcal{Y}$ 
		from
		\begin{align}
		\frac{1}{2}\|(\boldsymbol{j}\boldsymbol{x})(t)\|_H^2+\langle\mathbfcal{A}\boldsymbol{x}-
		\boldsymbol{f},\boldsymbol{x}\chi_{\left[0,t\right]}\rangle_{\mathbfcal{X}}\leq \frac{1}{2}\|\boldsymbol{x}_0\|_H^2\quad
		\text{ for all }t\in \overline{I}\label{eq:bcoer}
		\end{align}
		it follows that $\|\boldsymbol{x}\|_{\mathbfcal{X}\cap_{\boldsymbol{j}}\mathbfcal{Y}}\leq M$.
		\item[{(ii)}] \textbf{$C^0$-Bochner coercive}, if it is $C^0$-Bochner coercive with 
		respect to $\boldsymbol{f}$ and $\boldsymbol{x}_0$ for all $\boldsymbol{f}\in\mathbfcal{X}^*$ and $\boldsymbol{x}_0\in H$.
	\end{description}
\end{defn}

Note that $C^0$-Bochner coercivity, similar to semi-coercivity
(cf.~\cite{Rou05}) in conjunction with Gronwall's inequality, takes
into account the information from the operator and the time
derivative. In fact, $C^0$-Bochner coercivity is a more general property. In
the context of the main theorem on pseudo-monotone perturbations of
maximal monotone mappings (cf.~\cite[§32.4.]{Zei90B}), which implies
Theorem \ref{1.2}, $C^0$-Bochner coercivity is phrased in the spirit of a
local coercivity\footnote{$A:D(A)\subseteq V\to V^*$ is said to be
	coercive (cf.~\cite[§32.4.]{Zei90B}) with respect to $f\in V^*$, if
	$D(A)$ is unbounded and there exists a constant $R>0$, such that for
	$v\in V$ from $\langle Av,v\rangle_V\leq \langle f,v\rangle_V$ it
	follows $\|v\|_V\leq R$, i.e., all elements whose images with
	respect to $A$ do not grow beyond the data $f$ in this weak sense
	are contained in a fixed ball in $V$.} type condition of
$\frac{d_e}{dt}+\mathbfcal{A}:\mathbfcal{W}\subseteq \mathbfcal{X}\to
\mathbfcal{X}^*$. Being more precise, if
$\mathbfcal{A}:\mathbfcal{X}\cap_{\boldsymbol{j}}\mathbfcal{Y}\to
\mathbfcal{X}^*$ is $C^0$-Bochner coercive with respect to
$\boldsymbol{f}\in\mathbfcal{X}^*$ and $\boldsymbol{x}_0\in H$, then
for $\boldsymbol{x}\in \mathbfcal{W}$ from
${\|(\boldsymbol{j}\boldsymbol{x})(0)\|_H\leq
	\|\boldsymbol{x}_0\|_H}$, i.e., 
$\langle
\frac{d_e\boldsymbol{x}}{dt},\boldsymbol{x}\rangle_{\mathbfcal{X}}\ge
-\frac{1}{2}\|\boldsymbol{x}_0\|_H^2$, and
\begin{align}
\bigg\langle \frac{d_e\boldsymbol{x}}{dt}+\mathbfcal{A}\boldsymbol{x},\boldsymbol{x}\chi_{\left[0,t\right]}\bigg\rangle_{\mathbfcal{X}}\leq \langle \boldsymbol{f},\boldsymbol{x}\chi_{\left[0,t\right]}\rangle_{\mathbfcal{X}}\quad\text{ for all }t\in \overline{I}.\label{eq:bcoer2}
\end{align}
it follows $\|\boldsymbol{x}\|_{\mathbfcal{X}\cap_{\boldsymbol{j}}\mathbfcal{Y}}\leq M$, since \eqref{eq:bcoer2} is just \eqref{eq:bcoer}. In other words, if the image of $\boldsymbol{x}\in\mathbfcal{W}$ with respect to $\frac{d_e}{dt}$ and $\mathbfcal{A}$ is bounded by the data $\boldsymbol{x}_0$, $\boldsymbol{f}$ in this weak sense, then $\boldsymbol{x}$ is contained in a fixed ball in $\mathbfcal{X}\cap_{\boldsymbol{j}}\mathbfcal{Y}$. We chose \eqref{eq:bcoer} instead of \eqref{eq:bcoer2} in Definition \ref{3.7}, since $\boldsymbol{x}\in \mathbfcal{X}\cap_{\boldsymbol{j}}\mathbfcal{Y}$ is not admissible in \eqref{eq:bcoer2}.

We emphasize that there is a relation between $C^0$-Bochner coercivity and coercivity in the sense of Definition \ref{2.1}. In fact, in the case of
bounded operators
$\mathbfcal{A}:\mathbfcal{X}\rightarrow \mathbfcal{X}^*$, $C^0$-Bochner
coercivity extends the standard concept of coercivity.

\begin{prop}\label{3.8}
	Let $(V,H,j)$ be an evolution triple and $1<p<\infty$. If 
	$\mathbfcal{A}:D(\mathbfcal{A})\subseteq\mathbfcal{X}\rightarrow \mathbfcal{X}^*$ 
	with $D(\mathbfcal{A})= \mathbfcal{X}\cap_{\boldsymbol{j}}\mathbfcal{Y}$ is bounded 
	and coercive (in the sense of Definition \ref{2.1}), then $\mathbfcal{A}:\mathbfcal{X}\cap_{\boldsymbol{j}}
	\mathbfcal{Y}\rightarrow \mathbfcal{X}^*$ is $C^0$-Bochner coercive.
\end{prop}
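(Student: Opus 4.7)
The plan is to extract the two required bounds from the defining inequality one by one: first the $\mathbfcal{X}$-norm bound using coercivity (via the choice $t=T$), and then the $C^0(\overline{I},H)$-norm bound using the already-established $\mathbfcal{X}$-bound together with the boundedness of $\mathbfcal{A}$.

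Fix $\boldsymbol{f}\in\mathbfcal{X}^*$ and $\boldsymbol{x}_0\in H$, and let $\boldsymbol{x}\in\mathbfcal{X}\cap_{\boldsymbol{j}}\mathbfcal{Y}$ satisfy \eqref{eq:bcoer}. First I would specialize \eqref{eq:bcoer} to $t=T$, drop the non-negative kinetic term, and rearrange to obtain
\begin{align*}
\langle\mathbfcal{A}\boldsymbol{x},\boldsymbol{x}\rangle_{\mathbfcal{X}}
\leq \tfrac{1}{2}\|\boldsymbol{x}_0\|_H^2+\|\boldsymbol{f}\|_{\mathbfcal{X}^*}\|\boldsymbol{x}\|_{\mathbfcal{X}}.
\end{align*}
By the coercivity of $\mathbfcal{A}:\mathbfcal{X}\to\mathbfcal{X}^*$ in the sense of Definition \ref{2.1} (viii), there exists $R:=R(\boldsymbol{f},\boldsymbol{x}_0)>0$ such that $\langle\mathbfcal{A}\boldsymbol{x},\boldsymbol{x}\rangle_{\mathbfcal{X}}\geq (\|\boldsymbol{f}\|_{\mathbfcal{X}^*}+\tfrac{1}{2}\|\boldsymbol{x}_0\|_H^2+1)\|\boldsymbol{x}\|_{\mathbfcal{X}}$ for all $\boldsymbol{x}\in\mathbfcal{X}\cap_{\boldsymbol{j}}\mathbfcal{Y}$ with $\|\boldsymbol{x}\|_{\mathbfcal{X}}\geq R$. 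Combining these two inequalities in the case $\|\boldsymbol{x}\|_{\mathbfcal{X}}\geq\max\{R,1\}$ yields a contradiction, so we conclude $\|\boldsymbol{x}\|_{\mathbfcal{X}}\leq M_1:=\max\{R,1\}$.

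Next I would upgrade this to a uniform bound on $\|\boldsymbol{j}\boldsymbol{x}\|_{\mathbfcal{Y}}$. Since $\mathbfcal{A}$ is bounded and $\|\boldsymbol{x}\|_{\mathbfcal{X}}\leq M_1$, there is a constant $K:=K(M_1,\mathbfcal{A})>0$ such that $\|\mathbfcal{A}\boldsymbol{x}\|_{\mathbfcal{X}^*}\leq K$. Rearranging \eqref{eq:bcoer} for arbitrary $t\in\overline{I}$ and estimating by duality gives
\begin{align*}
\tfrac{1}{2}\|(\boldsymbol{j}\boldsymbol{x})(t)\|_H^2
\leq \tfrac{1}{2}\|\boldsymbol{x}_0\|_H^2+(\|\boldsymbol{f}\|_{\mathbfcal{X}^*}+K)\,\|\boldsymbol{x}\chi_{[0,t]}\|_{\mathbfcal{X}}
\leq \tfrac{1}{2}\|\boldsymbol{x}_0\|_H^2+(\|\boldsymbol{f}\|_{\mathbfcal{X}^*}+K)M_1.
\end{align*}
Since $\boldsymbol{j}\boldsymbol{x}\in\mathbfcal{Y}=C^0(\overline{I},H)$, taking the supremum over $t\in\overline{I}$ on the left-hand side yields a bound $\|\boldsymbol{j}\boldsymbol{x}\|_{\mathbfcal{Y}}\leq M_2$ with $M_2:=M_2(\boldsymbol{f},\boldsymbol{x}_0,\mathbfcal{A})$. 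Setting $M:=\max\{M_1,M_2\}$ produces the constant required in Definition \ref{3.7} (i).

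I do not expect any serious obstacle: the argument is a direct two-step extraction, and the only subtlety is to separate the roles of coercivity (which controls the $\mathbfcal{X}$-part) and of boundedness (which, combined with the $\mathbfcal{X}$-bound, yields the $\mathbfcal{Y}$-part via the energy identity at general $t$). The fact that \eqref{eq:bcoer} is required to hold for \emph{all} $t\in\overline{I}$ rather than merely almost every $t$, as in Definition \ref{1.3} (ii), is precisely what allows the pointwise sup bound needed for the $C^0(\overline{I},H)$-norm.
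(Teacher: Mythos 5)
Your proof is correct and is precisely the ``straightforward adaptation of \cite[Lemma~3.21]{KR19}'' that the paper cites: specialize \eqref{eq:bcoer} at $t=T$ to get an $\mathbfcal{X}$-bound from coercivity, then feed boundedness of $\mathbfcal{A}$ back into \eqref{eq:bcoer} at general $t$ to control the $\mathbfcal{Y}$-norm. Two minor cosmetic remarks: the norm on $\mathbfcal{X}\cap_{\boldsymbol{j}}\mathbfcal{Y}$ is the sum $\|\cdot\|_{\mathbfcal{X}}+\|\boldsymbol{j}\cdot\|_{\mathbfcal{Y}}$ (Proposition~\ref{7.4}), so the final constant should be $M=M_1+M_2$ rather than $\max\{M_1,M_2\}$; and your closing observation overstates the role of ``for all $t\in\overline{I}$'' versus ``a.e.\ $t$'' --- since $\boldsymbol{j}\boldsymbol{x}\in C^0(\overline{I},H)$ by hypothesis, the pointwise bound at a.e.\ $t$ would extend to the supremum by continuity anyway; the real reason Definition~\ref{3.7} is phrased with ``for all $t$'' is that this makes \eqref{eq:bcoer} a stronger hypothesis on $\boldsymbol{x}$, hence $C^0$-Bochner coercivity a weaker requirement on $\mathbfcal{A}$.
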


\begin{proof}
	A straightforward adaptation of \cite[Lemma 3.21]{KR19}.\hfill$\square$
\end{proof}

\begin{lem}[Induced Bochner pseudo-monotonicity and Bochner coercivity]\label{HL}
	\upshape Let $(V,H,j)$ be an evolution triple, $1<p<\infty$ and
	$A(t):V\to V^*$, $t\in I$, a family of operators with the following properties:
	\begin{description}
		\item[\textbf{(C.1)}] \hypertarget{C.1}
		$A(t):V\rightarrow V^*$ is pseudo-monotone for
		almost every $t\in I$.
		\item[\textbf{(C.2)}] \hypertarget{C.2}
		$A(\cdot)v:I\rightarrow V^*$ is Bochner measurable for
		all $v\in V$.
		\item[\textbf{(C.3)}] \hypertarget{C.3} For some non-negative
		functions $\alpha,\gamma\in L^{p'}(I)$, $\beta\in L^\infty(I)$ and
		a non-decreasing function
		$\mathscr{B}:\mathbb{R}_{\ge 0}\rightarrow \mathbb{R}_{\ge 0}$
		holds
		\begin{align*}
		\left\|A(t)v\right\|_{V^*}\leq \mathscr{B}(\|jv\|_H)(\alpha(t)+\beta(t)\|v\|_{V}^{p-1})+\gamma(t)
		\end{align*}
		for almost every $t\in I$ and all $v\in V$.
		\item[\textbf{(C.4)}] \hypertarget{C.4}{} For some constant $c_0>0$ and
		non-negative functions $c_1,c_2\in L^1(I)$ holds 
		\begin{align*}
		\langle A(t)v,v\rangle_V\ge c_0\|v\|_V^p-c_1(t)\|jv\|_H^2-c_2(t)
		\end{align*}
		for almost every $t\in I$ and all $v\in V$.
	\end{description}
	Then the induced operator
	$\mathbfcal{A}:\mathbfcal{X}\cap_{\boldsymbol{j}}L^\infty(I,H)\rightarrow\mathbfcal{X}^*$, given via $(\mathbfcal{A}\boldsymbol{x})(t):=A(t)(\boldsymbol{x}(t))$ in $V^*$ for almost every $t\in I$ and all $\boldsymbol{x}\in \mathbfcal{X}\cap_{\boldsymbol{j}}L^\infty(I,H)$,
	is well-defined, bounded, Bochner pseudo-monotone and Bochner coercive.
\end{lem}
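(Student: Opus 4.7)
The plan is to verify the four assertions in the stated order---well-definedness, boundedness, Bochner coercivity, and finally Bochner pseudo-monotonicity---the last being the only nontrivial task.

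\textbf{Well-definedness, boundedness, and Bochner coercivity.} For $\boldsymbol{x}\in\mathbfcal{X}\cap_{\boldsymbol{j}}L^\infty(I,H)$, (C.2) together with linearity in the second slot furnishes Bochner measurability of $t\mapsto A(t)(\boldsymbol{x}(t))$ when $\boldsymbol{x}$ is simple; for general $\boldsymbol{x}$, one approximates pointwise in $V$ by simple functions and passes to the limit using the demi-continuity of each $A(t)$, which is a standard consequence of the pseudo-monotonicity (C.1) together with boundedness on bounded sets guaranteed by (C.3). The growth bound (C.3), paired with $\boldsymbol{j}\boldsymbol{x}\in L^\infty(I,H)$, simultaneously gives $\mathbfcal{A}\boldsymbol{x}\in\mathbfcal{X}^*$ and boundedness of $\mathbfcal{A}$ on bounded subsets. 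For Bochner coercivity, I substitute the pointwise lower bound from (C.4) into the inequality in Definition \ref{1.3}(ii), absorb $\langle\boldsymbol{f},\boldsymbol{x}\chi_{[0,t]}\rangle_{\mathbfcal{X}}$ via Young's inequality, and close with Gronwall's lemma applied to $t\mapsto\|(\boldsymbol{j}\boldsymbol{x})(t)\|_H^2$ (the kernel $c_1$ lies in $L^1(I)$), yielding a joint bound on $\|\boldsymbol{j}\boldsymbol{x}\|_{L^\infty(I,H)}$ and $\|\boldsymbol{x}\|_{\mathbfcal{X}}$ depending only on $\boldsymbol{f}$, $\boldsymbol{x}_0$ and the data in (C.3)--(C.4).

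\textbf{Bochner pseudo-monotonicity.} Given $(\boldsymbol{x}_n)\subseteq\mathbfcal{X}\cap_{\boldsymbol{j}}L^\infty(I,H)$ satisfying \eqref{eq:1.2}--\eqref{eq:1.5}, the strategy is to reduce to the pointwise pseudo-monotonicity (C.1) after extracting a suitable subsequence. Set $g_n(t):=\langle A(t)\boldsymbol{x}_n(t),\boldsymbol{x}_n(t)-\boldsymbol{x}(t)\rangle_V$. Using (C.4) to bound $\langle A(t)\boldsymbol{x}_n(t),\boldsymbol{x}_n(t)\rangle_V$ from below and the growth estimate (C.3) together with $\|\boldsymbol{j}\boldsymbol{x}_n\|_{L^\infty(I,H)}\leq C$ to bound $\|A(t)\boldsymbol{x}_n(t)\|_{V^*}$ in $L^{p'}(I)$ uniformly in $n$, one produces a uniformly integrable lower envelope for $g_n$. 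Combining this with \eqref{eq:1.5} via Fatou's lemma yields, on a subsequence (not relabelled) and a full-measure set $E\subseteq I$, both $\liminf_{n\to\infty}g_n(t)\leq 0$ and $\sup_n\|\boldsymbol{x}_n(t)\|_V<\infty$. At each $t\in E$, reflexivity of $V$ provides $V$-weak accumulation points of $(\boldsymbol{x}_n(t))$; by \eqref{eq:1.4}, the weak continuity of $j:V\to H$, and the injectivity of $j$, any such accumulation point equals $\boldsymbol{x}(t)$, so $\boldsymbol{x}_n(t)\rightharpoonup\boldsymbol{x}(t)$ in $V$ for a.e.\ $t\in I$ along the extracted subsequence. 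The pointwise pseudo-monotonicity (C.1) then delivers, for all $v\in V$ and a.e.\ $t\in I$, the inequality $\langle A(t)\boldsymbol{x}(t),\boldsymbol{x}(t)-v\rangle_V\leq \liminf_{n\to\infty}\langle A(t)\boldsymbol{x}_n(t),\boldsymbol{x}_n(t)-v\rangle_V$. Specializing $v=\boldsymbol{y}(t)$ for arbitrary $\boldsymbol{y}\in\mathbfcal{X}$, integrating over $I$ and invoking Fatou once more with the same envelope produces
\[
\langle\mathbfcal{A}\boldsymbol{x},\boldsymbol{x}-\boldsymbol{y}\rangle_{\mathbfcal{X}}\leq \liminf_{n\to\infty}\langle\mathbfcal{A}\boldsymbol{x}_n,\boldsymbol{x}_n-\boldsymbol{y}\rangle_{\mathbfcal{X}},
\]
and the standard convergence principle \cite[Kap.\ I, Lemma 5.4]{GGZ74} upgrades the conclusion from the subsequence to the full sequence.

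\textbf{Main obstacle.} The delicate step is translating the Bochner-level $\limsup$ hypothesis \eqref{eq:1.5} into usable pointwise information: three ingredients have to mesh simultaneously---control of the negative part of $g_n$ through (C.3)--(C.4), identification of pointwise $V$-weak limits from merely $H$-weak information on $j(\boldsymbol{x}_n(t))$ combined with the pointwise boundedness extracted from the lower envelope, and an envelope strong enough to permit a second application of Fatou after pointwise pseudo-monotonicity has been invoked. Once this machinery is in place, the remaining manipulations are routine.
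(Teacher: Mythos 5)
Your identification of the $n$-independent $L^1(I)$ envelope for $g_n(t):=\langle A(t)\boldsymbol{x}_n(t),\boldsymbol{x}_n(t)-\boldsymbol{x}(t)\rangle_V$ from (C.3)--(C.4) and the $L^\infty(I,H)$ bound is correct, as are the final Fatou step and the convergence-principle closure; but the middle step is not. With $g_n\geq-\mu$, $\mu\in L^1(I)$ and $n$-independent, Fatou's lemma together with \eqref{eq:1.5} delivers only
\begin{align*}
\int_I\liminf_{n\to\infty} g_n(t)\,dt\;\leq\;\limsup_{n\to\infty}\int_I g_n(t)\,dt\;\leq\;0,
\end{align*}
which does \emph{not} force $\liminf_n g_n(t)\leq 0$ for a.e.\ $t$ (take $g_n\equiv g$ with $\int_I g\,dt=0$ but $g>0$ on a set of positive measure). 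Consequently the advertised pointwise bound $\sup_n\|\boldsymbol{x}_n(t)\|_V<\infty$ and the pointwise weak convergence $\boldsymbol{x}_n(t)\rightharpoonup\boldsymbol{x}(t)$ are not yet available, so the invocation of (C.1) that follows has no hypothesis to stand on.

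The missing ingredient is the converse pointwise estimate $\liminf_n g_n(t)\geq 0$ for a.e.\ $t\in I$, which has to be established \emph{before}, not after, the Fatou step, and which itself requires (C.1): fix such a $t$ and take a $t$-dependent subsequence realizing the $\liminf$. If the limit were negative, then $(\boldsymbol{x}_{n_k}(t))$ is bounded in $V$ along this subsequence by (C.4), the uniform $H$-bound, (C.3) and Young; reflexivity together with \eqref{eq:1.4}, weak continuity and injectivity of $j$ identify the $V$-weak accumulation point as $\boldsymbol{x}(t)$; and (C.1) applied with test element $\boldsymbol{x}(t)$ gives $0\leq\liminf_k g_{n_k}(t)<0$, a contradiction. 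Only once $\liminf_n g_n\geq 0$ a.e.\ is in hand can one conclude from $\limsup_n\int_I g_n\,dt\leq 0$ and Fatou that $\int_I g_n\,dt\to 0$, then that $g_n^-\to 0$ in $L^1(I)$ by dominated convergence ($0\leq g_n^-\leq\mu$, $g_n^-\to 0$ a.e.), hence $g_n\to 0$ in $L^1(I)$, and then, along a subsequence, $g_n\to 0$ a.e.; this finally yields the pointwise $V$-boundedness, $\boldsymbol{x}_n(t)\rightharpoonup\boldsymbol{x}(t)$ in $V$, and $\limsup_n g_n(t)\leq 0$ a.e., so that (C.1) can be applied a second time with arbitrary $w=\boldsymbol{y}(t)$, followed by your Fatou step for $h_n(t)=\langle A(t)\boldsymbol{x}_n(t),\boldsymbol{x}_n(t)-\boldsymbol{y}(t)\rangle_V$. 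In short, the pointwise pseudo-monotonicity (C.1) must enter twice, once to fix the sign of $\liminf g_n$ before any use of Fatou and once afterwards; your draft uses it only in the second position, which leaves a genuine logical gap.
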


\begin{proof}
	If $V$ is additionally separable, a proof can be found in \cite[Proposition 3.13]{KR19}. As the argumentation remains true if we omit the separability of $V$, we however refer to this proof.\hfill$\square$
\end{proof}

\section{Abstract Galerkin approach}
\label{sec:4}
In this section we specify the exact framework of the implemented Galerkin approach.
\begin{rmk}[Galerkin approximation]\label{4.1} Let $(V,H,j)$ be an evolution triple and 
	$1<p< \infty$. Furthermore, let $\boldsymbol{y}_0\in H$, $\boldsymbol{f}\in\mathbfcal{X}^*$
	 and $\mathbfcal{A}:\mathbfcal{X}\cap_{\boldsymbol{j}}\mathbfcal{Y}\rightarrow\mathbfcal{X}^*$.
	\begin{description}[{(iii)}]
		\item[{(i)}]\textbf{Galerkin-Basis:} Let $\mathcal{U}$ be a system of subspaces 
		$U\subseteq V$ such that $(U,\|\cdot\|_V)$ is a Banach space and 
		\begin{align*}
		\overline{\bigcup_{U\in\;\mathcal{U}}{U}}^{\|\cdot\|_V}=V.
		\end{align*} 
		Moreover, for $U\in \mathcal{U}$ we set $H_U:=\overline{j(U)}^{\|\cdot\|_H}$
		and by $\boldsymbol{y}_0^{U}\in H_U$  we denote a net of approximative  initial values
		 such that
		\begin{align*}
		\boldsymbol{y}_0^{U}\overset{U\in\mathcal{U}}{\rightarrow}\boldsymbol{y}_0\quad\text{ in }H
		\qquad\text{ and }\qquad\|\boldsymbol{y}_0^{U}\|_H\leq\|\boldsymbol{y}_0\|_H
	\quad	\text{ for all }U\in\mathcal{U},
		\end{align*}
		where the notion of convergence initially has to be understood in the sense of 
		nets but will later be realized by sequential convergence. Such a family 
		$(U,\boldsymbol{y}_0^{U})_{U\in\;\mathcal{U}}$ is called a \textbf{Galerkin basis} 
		of $(V,\boldsymbol{y}_0)$.
		\item[{(ii)}]\textbf{Restriction of evolution triple structure:} $(H_U,(\cdot,\cdot)_H)$ 
		is a Hilbert space and the restricted operator  $j_U:=j_{|_U}:U\rightarrow H_U$
		a dense embedding, i.e., $(U,H_U,j_U)$ forms an evolution triple. In particular, 
		the corresponding canonical embedding $e_U:U\rightarrow U^*$ satisfies
		\begin{align}
		\langle e_Uu,\tilde{u}\rangle_U=(ju,j\tilde{u})_H=\langle eu,\tilde{u}\rangle_V\quad
		\text{ for all }u,\tilde{u}\in U.\label{eq:4.2}
		\end{align} 
		\item[{(iii)}]\textbf{Restriction of energy spaces:} For $U\in\mathcal{U}$ and 
		$I:=\left(0,T\right)$, with $0<T<\infty$, we set
		\begin{align*}
			\mathbfcal{X}_U:=L^p(I,U),\qquad\mathbfcal{W}_U:=W_{e_U}^{1,p,p'}(I,U,U^*),
			\qquad \mathbfcal{Y}_U:=C^0(\overline{I},H_U).
		\end{align*} Due to Remark \ref{2.5} the couple $(\mathbfcal{X}_U,\mathbfcal{Y}_U)
		:=(\mathbfcal{X}_U,\mathbfcal{Y}_U,L^1(I,H_U),\boldsymbol{j}_U,\text{id}_{\mathbfcal{Y}_U})$ 
		forms a compatible couple, where $\boldsymbol{j}_U:\mathbfcal{X}_U
		\cap_{\boldsymbol{j}_U}\mathbfcal{Y}_U\rightarrow \mathbfcal{Y}_U$ is given via 
		$(\boldsymbol{j}_U\boldsymbol{x})(t):=j_U(\boldsymbol{x}(t))=(\boldsymbol{j}\boldsymbol{x})(t)$ 
		for almost every $t\in I$ and all $\boldsymbol{x}\in 
		\mathbfcal{X}_U\cap_{\boldsymbol{j}_U}\mathbfcal{Y}_U$ (cf.~Proposition \ref{2.4}). 
		In particular, it holds $\mathbfcal{X}_U\cap_{\boldsymbol{j}_U}\mathbfcal{Y}_U
		\hookrightarrow \mathbfcal{X}\cap_{\boldsymbol{j}}\mathbfcal{Y}$. Apart from this, 
		Proposition \ref{2.12} provides the embedding $\mathbfcal{W}_{U}\hookrightarrow 
		\mathbfcal{X}_U\cap_{\boldsymbol{j}_U}\mathbfcal{Y}_U$ and the generalized 
		integration by parts formula with respect to $U$:
			\begin{align}
		\int_{t'}^{t}{\left\langle
			\frac{d_{e_U}\boldsymbol{x}}{dt}(s),\boldsymbol{y}(s)\right\rangle_U\!ds}
		=\left[((\boldsymbol{j}\boldsymbol{x})(s), (\boldsymbol{j}
		\boldsymbol{y})(s))_H\right]^{s=t}_{s=t'}-\int_{t'}^{t}{\left\langle
			\frac{d_{e_U}\boldsymbol{y}}{dt}(s),\boldsymbol{x}(s)\right\rangle_U\!ds} \label{eq:4.3}
		\end{align}
		for all $\boldsymbol{x},\boldsymbol{y}\in \mathbfcal{W}_U$ and $t,t'\in \overline{I}$
		 with $t'\leq t$.
		\item[{(iv)}]\textbf{Restriction of operators:} For $U\in\mathcal{U}$ we define the 
		restricted operator and right-hand side by
		\begin{align*}
		\mathbfcal{A}_U:=(\text{id}_{\mathbfcal{X}_U})^*\mathbfcal{A}: {\mathbfcal{X}_U}
		\cap_{\boldsymbol{j}_U}{\mathbfcal{Y}_U}\rightarrow\mathbfcal{X}_U^*\qquad
		\text{ and }\qquad\boldsymbol{f}_U:=\left(\text{id}_{\mathbfcal{X}_U}\right)^*
		\boldsymbol{f}\in\mathbfcal{X}_U^*.
		\end{align*}
		Then it holds for all $\boldsymbol{x}\in {\mathbfcal{X}_U}\cap_{\boldsymbol{j}_U}{\mathbfcal{Y}_U}$
		and $\tilde{\boldsymbol{x}}\in \mathbfcal{X}_U$
		\begin{align}
		\langle\mathbfcal{A}_U\boldsymbol{x},\tilde{\boldsymbol{x}}\rangle_{\mathbfcal{X}_U}
		=\langle \mathbfcal{A}\boldsymbol{x},\tilde{\boldsymbol{x}}\rangle_{\mathbfcal{X}}
		\qquad\text{ and }\qquad\langle\boldsymbol{f}_U,\tilde{\boldsymbol{x}}
		\rangle_{\mathbfcal{X}_U}=\langle \boldsymbol{f},\tilde{\boldsymbol{x}}
		\rangle_{\mathbfcal{X}}.\label{eq:4.4}
		\end{align}
	.
		\item[{(v)}]\textbf{Galerkin system:} Given a Galerkin basis 
		$(U,\boldsymbol{y}^{U}_0)_{U\in\;\mathcal{U}}$ we obtain the 
		well-posedness of the system of evolution equations
		\begin{align*}
		\begin{split}
		\begin{alignedat}{2}
		\frac{d_{e_U}\boldsymbol{y}_U}{dt}+\mathbfcal{A}_U\boldsymbol{y}_U&
		=\boldsymbol{f}_U&&\quad\text{ in }\mathbfcal{X}_U^*,\\
		(\boldsymbol{j}_U\boldsymbol{y}_U)(0)&=\boldsymbol{y}^{U}_0&&\quad\text{ in } 
		H_U, \quad U\in \mathcal{U}.
		\end{alignedat}
		\end{split}
		\end{align*}
		Such a system is called \textbf{Galerkin system with respect to} 
		$(U,\boldsymbol{y}^{U}_0)_{U\in\;\mathcal{U}}$.
	\end{description}
\end{rmk}

The next lemma examines to what extent the properties of the global operator, 
especially those developed in Section \ref{sec:3}, transfer to its restriction as per 
Remark \ref{4.1} (iv).

\begin{lem}\label{4.5} Let $(V,H,j)$ be an evolution triple and $1<p< \infty$. 
	Furthermore, let $\boldsymbol{y}_0\in H$, $\boldsymbol{f}\in\mathbfcal{X}^*$ 
	and $\mathbfcal{A}:\mathbfcal{X}\cap_{\boldsymbol{j}}\mathbfcal{Y}
	\rightarrow\mathbfcal{X}^*$. 
If $(U,\boldsymbol{y}^{U}_0)_{U\in\;\mathcal{U}}$ is a Galerkin basis of 
$(V,\boldsymbol{y}_0)$, $\mathbfcal{A}_U:=(\text{id}_{\mathbfcal{X}_U})^*\mathbfcal{A}:
{\mathbfcal{X}_U}\cap_{\boldsymbol{j}_U}{\mathbfcal{Y}_U}\rightarrow\mathbfcal{X}_U^*$
 and $\boldsymbol{f}_U:=(\text{id}_{\mathbfcal{X}_U})^*\boldsymbol{f}\in \mathbfcal{X}_U^*$, 
 then it holds:
	\begin{description}[(iii)]
		\item[(i)] If $\mathbfcal{A}: \mathbfcal{X}\cap_{\boldsymbol{j}}\mathbfcal{Y}
		\rightarrow\mathbfcal{X}^*$
		 is bounded, demi-continuous or $C^0$-Bochner pseudo-monotone, then 
		 $\mathbfcal{A}_U: {\mathbfcal{X}_U}\cap_{\boldsymbol{j}_U}{\mathbfcal{Y}_U}
		 \rightarrow\mathbfcal{X}_U^*$ is as well.
		\item[(ii)]  If $\mathbfcal{A}: \mathbfcal{X}\cap_{\boldsymbol{j}}\mathbfcal{Y}
		\rightarrow\mathbfcal{X}^*$
		 is $C^0$-Bochner coercive with respect to $\boldsymbol{f}\in \mathbfcal{X}^*$ and $\boldsymbol{y}_0\in H$, then 
		 $\mathbfcal{A}_U: {\mathbfcal{X}_U}\cap_{\boldsymbol{j}_U}{\mathbfcal{Y}_U}
		 \rightarrow\mathbfcal{X}_U^*$ is $C^0$-Bochner coercive with respect to 
		 $\boldsymbol{f}_U\in \mathbfcal{X}_U^*$ and $\boldsymbol{y}_0^{U}\in H_U$.
		\item[(iii)] If $\mathbfcal{A}_0: \mathbfcal{X}
		\rightarrow\mathbfcal{X}^*$ is monotone, 
		$\mathbfcal{B}: \mathbfcal{X}\cap_{\boldsymbol{j}}\mathbfcal{Y}\rightarrow\mathbfcal{X}^*$ 
		is bounded and $\mathbfcal{A}:=\mathbfcal{A}_0+\mathbfcal{B}: 
		\mathbfcal{X}\cap_{\boldsymbol{j}}\mathbfcal{Y}\rightarrow\mathbfcal{X}^*$
		 satisfies the $C^0$-Bochner condition (M), then $\mathbfcal{A}_U:
		 {\mathbfcal{X}_U}\cap_{\boldsymbol{j}_U}{\mathbfcal{Y}_U}
		 \rightarrow\mathbfcal{X}_U^*$ satisfies the $C^0$-Bochner condition (M).
	\end{description}
\end{lem}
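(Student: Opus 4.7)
The plan is to exploit two structural features of the Galerkin restriction: the continuous embedding $\mathbfcal{X}_U\cap_{\boldsymbol{j}_U}\mathbfcal{Y}_U\hookrightarrow \mathbfcal{X}\cap_{\boldsymbol{j}}\mathbfcal{Y}$ from Remark \ref{4.1} (iii), together with the compatibility identities \eqref{eq:4.4}. Since $U\subseteq V$ and $H_U\subseteq H$ inherit their norms, the two pull-back intersection norms actually agree on elements of the smaller space. The dual map $(\text{id}_{\mathbfcal{X}_U})^*\colon \mathbfcal{X}^*\to \mathbfcal{X}_U^*$ will be used throughout; it is norm-decreasing, linear, and in particular weak-to-weak continuous.

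For (i), all three properties transfer one by one: boundedness is inherited because $(\text{id}_{\mathbfcal{X}_U})^*$ is bounded and the embedding is continuous; demi-continuity uses the same embedding combined with weak-to-weak continuity of $(\text{id}_{\mathbfcal{X}_U})^*$; and for $C^0$-Bochner pseudo-monotonicity one lifts a sequence $(\boldsymbol{x}_n)\subseteq \mathbfcal{X}_U\cap_{\boldsymbol{j}_U}\mathbfcal{Y}_U$ satisfying \eqref{eq:3.2}--\eqref{eq:3.3} for $\mathbfcal{A}_U$ into the large space, applies the hypothesis to $\mathbfcal{A}$ (noting $\boldsymbol{x}_n-\boldsymbol{x}\in\mathbfcal{X}_U\subseteq\mathbfcal{X}$ so that \eqref{eq:4.4} turns the two duality pairings into each other), and finally specialises the conclusion to test functions $\boldsymbol{y}\in\mathbfcal{X}_U\subseteq \mathbfcal{X}$ via \eqref{eq:4.4} in the reverse direction.

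Part (ii) reduces to rewriting \eqref{eq:bcoer}. Given $\boldsymbol{x}\in\mathbfcal{X}_U\cap_{\boldsymbol{j}_U}\mathbfcal{Y}_U$ satisfying the coercivity inequality for $\mathbfcal{A}_U$, $\boldsymbol{f}_U$ and $\boldsymbol{y}_0^U$, the identities \eqref{eq:4.4} (applied to $\boldsymbol{x}\chi_{[0,t]}\in\mathbfcal{X}_U$) convert the two duality terms into their $\mathbfcal{X}$-counterparts for $\mathbfcal{A}$ and $\boldsymbol{f}$, while $\|\boldsymbol{y}_0^U\|_H\leq \|\boldsymbol{y}_0\|_H$ from Remark \ref{4.1} (i) handles the right-hand side. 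The hypothesis on $\mathbfcal{A}$ then provides a uniform bound $M$ in $\mathbfcal{X}\cap_{\boldsymbol{j}}\mathbfcal{Y}$, which is also the required bound in $\mathbfcal{X}_U\cap_{\boldsymbol{j}_U}\mathbfcal{Y}_U$ by the norm coincidence noted above.

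The real obstacle is (iii). The sticking point is that \eqref{eq:3.4} for $\mathbfcal{A}_U$ only gives weak convergence of $\mathbfcal{A}_U\boldsymbol{x}_n$ in the small space $\mathbfcal{X}_U^*$, whereas invoking the $C^0$-Bochner condition (M) for $\mathbfcal{A}$ requires weak convergence of $\mathbfcal{A}\boldsymbol{x}_n$ in the large space $\mathbfcal{X}^*$; such an upgrade fails for a general operator. This is precisely what the decomposition $\mathbfcal{A}=\mathbfcal{A}_0+\mathbfcal{B}$ buys us. The term $\mathbfcal{B}\boldsymbol{x}_n$ is bounded in $\mathbfcal{X}^*$ because $\mathbfcal{B}$ is bounded and $(\boldsymbol{x}_n)$ is bounded in $\mathbfcal{X}\cap_{\boldsymbol{j}}\mathbfcal{Y}$. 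For the monotone part $\mathbfcal{A}_0$, I would apply Proposition \ref{2.2} (ii) with $h\equiv 1$: the required upper bound on $\langle \mathbfcal{A}_0\boldsymbol{x}_n,\boldsymbol{x}_n\rangle_{\mathbfcal{X}}=\langle \mathbfcal{A}\boldsymbol{x}_n,\boldsymbol{x}_n\rangle_{\mathbfcal{X}}-\langle \mathbfcal{B}\boldsymbol{x}_n,\boldsymbol{x}_n\rangle_{\mathbfcal{X}}$ follows from \eqref{eq:4.4} together with the weak convergence \eqref{eq:3.4} of $\mathbfcal{A}_U\boldsymbol{x}_n$ paired with the bounded $\boldsymbol{x}_n$, and the boundedness of $\mathbfcal{B}\boldsymbol{x}_n$. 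Thus $\mathbfcal{A}\boldsymbol{x}_n$ is bounded in $\mathbfcal{X}^*$, reflexivity yields a subsequence with $\mathbfcal{A}\boldsymbol{x}_n\rightharpoonup \boldsymbol{\xi}$ in $\mathbfcal{X}^*$, and weak-to-weak continuity of $(\text{id}_{\mathbfcal{X}_U})^*$ forces $(\text{id}_{\mathbfcal{X}_U})^*\boldsymbol{\xi}=\boldsymbol{\xi}_U$. The hypothesis \eqref{eq:3.5} for $\mathbfcal{A}_U$ transfers to the corresponding one for $\mathbfcal{A}$ via \eqref{eq:4.4}, so condition (M) for $\mathbfcal{A}$ gives $\mathbfcal{A}\boldsymbol{x}=\boldsymbol{\xi}$; applying $(\text{id}_{\mathbfcal{X}_U})^*$ yields $\mathbfcal{A}_U\boldsymbol{x}=\boldsymbol{\xi}_U$. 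Since $\boldsymbol{\xi}_U$ was fixed at the outset, the standard subsequence principle finishes the proof without further bookkeeping.
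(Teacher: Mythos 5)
Your proof is correct and follows the paper's own argument essentially step for step: parts (i) and (ii) reduce to the embedding $\mathbfcal{X}_U\cap_{\boldsymbol{j}_U}\mathbfcal{Y}_U\hookrightarrow\mathbfcal{X}\cap_{\boldsymbol{j}}\mathbfcal{Y}$, the compatibility identities \eqref{eq:4.4}, and the norm coincidences from Remark \ref{4.1}, while part (iii) uses exactly the paper's device of bounding $\langle\mathbfcal{A}_0\boldsymbol{x}_n,\boldsymbol{x}_n\rangle_{\mathbfcal{X}}$ and then invoking Proposition \ref{2.2} (ii) with $h\equiv 1$ to get boundedness of $(\mathbfcal{A}_0\boldsymbol{x}_n)$ in $\mathbfcal{X}^*$, followed by reflexivity and weak continuity of $(\text{id}_{\mathbfcal{X}_U})^*$. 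One small remark: in (iii) no ``subsequence principle'' is actually needed at the very end, since the conclusion $\mathbfcal{A}_U\boldsymbol{x}=\boldsymbol{\xi}_U$ is a fixed identity obtained along a single subsequence, and that already finishes the verification of the $C^0$-Bochner condition (M).
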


\begin{proof} 
	\textbf{ad (i)/(ii)} (i) follows from
	the embedding ${\mathbfcal{X}_U}\cap_{\boldsymbol{j}_U}{\mathbfcal{Y}_U}
	\hookrightarrow \mathbfcal{X}\cap_{\boldsymbol{j}}\mathbfcal{Y}$, 
	the weak continuity of $(\text{id}_{\mathbfcal{X}_U})^*:\mathbfcal{X}^*
	\rightarrow\mathbfcal{X}_U^*$ and the identities \eqref{eq:4.4}. (ii) follows from \eqref{eq:4.4}, $\|\boldsymbol{y}_0^{U}\|_{H_U}\leq \|\boldsymbol{y}_0\|_H$, $\|\cdot\|_{{\mathbfcal{X}_U}\cap_{\boldsymbol{j}_U}{\mathbfcal{Y}_U}}=\|\cdot\|_{{\mathbfcal{X}}\cap_{\boldsymbol{j}}{\mathbfcal{Y}}}$ on ${\mathbfcal{X}}\cap_{\boldsymbol{j}}{\mathbfcal{Y}}$ and $\|\cdot\|_{H_U}=\|\cdot\|_H$ on $H_U$.
	
	\textbf{ad (iii)} Let $(\boldsymbol{x}_n)_{n\in\mathbb{N}}
\subseteq {\mathbfcal{X}_U}\cap_{\boldsymbol{j}_U}{\mathbfcal{Y}_U}$ 
satisfy \eqref{eq:3.2}--\eqref{eq:3.5} with respect to $\mathbfcal{X}_U$ 
and $\mathbfcal{Y}_U$, i.e.,
	\begin{align*}
	\boldsymbol{x}_n\overset{n\rightarrow\infty}{\rightharpoonup}\boldsymbol{x}
	\text{ in }{\mathbfcal{X}_U}\cap_{\boldsymbol{j}_U}{\mathbfcal{Y}_U},
	\quad\mathbfcal{A}_U\boldsymbol{x}_n\overset{n\rightarrow\infty}{\rightharpoonup}
	\boldsymbol{\xi}_U\text{ in }\mathbfcal{X}_U^*,\quad
	\limsup_{n\rightarrow\infty}{\langle \mathbfcal{A}_U\boldsymbol{x}_n,
		\boldsymbol{x}_n\rangle_{\mathbfcal{X}_U}}\leq \langle\boldsymbol{\xi}_U,
	\boldsymbol{x}\rangle_{\mathbfcal{X}_U}.
	\end{align*}
    This and the embedding ${\mathbfcal{X}_U}\cap_{\boldsymbol{j}_U}{\mathbfcal{Y}_U}
    \hookrightarrow \mathbfcal{X}\cap_{\boldsymbol{j}}\mathbfcal{Y}$ 
    immediately imply $\boldsymbol{x}_n\overset{n\rightarrow\infty}{\rightharpoonup}
    \boldsymbol{x}\text{ in }\mathbfcal{X}\cap_{\boldsymbol{j}}\mathbfcal{Y}$.
	In addition, there exist constants $M,M'>0$ such that 
	$\|\boldsymbol{x}_n\|_{{\mathbfcal{X}}\cap_{\boldsymbol{j}}{\mathbfcal{Y}}}
	=\|\boldsymbol{x}_n\|_{{\mathbfcal{X}_U}\cap_{\boldsymbol{j}_U}{\mathbfcal{Y}_U}}\leq M$ 
	and $\|\mathbfcal{A}_U\boldsymbol{x}_n\|_{\mathbfcal{X}_U^*}\leq M'$
	for all $n\in\mathbb{N}$.
	As $\mathbfcal{B}:{\mathbfcal{X}}\cap_{\boldsymbol{j}}{\mathbfcal{Y}}
	\rightarrow \mathbfcal{X}^*$ is bounded, we obtain a further constant 
	$C>0$ such that $\|\mathbfcal{B}\boldsymbol{x}_n\|_{\mathbfcal{X}^*}
	\leq C$ for all $n\in\mathbb{N}$. From this and \eqref{eq:4.4} we 
	deduce for all $n\in \mathbb{N}$ that
	\begin{align*}
	\langle \mathbfcal{A}_0\boldsymbol{x}_n,\boldsymbol{x}_n
	\rangle_{\mathbfcal{X}}\leq \|\mathbfcal{A}_U\boldsymbol{x}_n\|_{\mathbfcal{X}_U^*}
	\|\boldsymbol{x}_n\|_{\mathbfcal{X}_U}+
	\|\mathbfcal{B}\boldsymbol{x}_n\|_{\mathbfcal{X}^*}
	\|\boldsymbol{x}_n\|_{\mathbfcal{X}}\leq 
	(M'+C)M.
	\end{align*}
	Since $\mathbfcal{A}_0:\mathbfcal{X}\rightarrow \mathbfcal{X}^*$ 
	is monotone Proposition \ref{2.2} with 
	$S=(\boldsymbol{x}_n)_{n\in\mathbb{N}}\subseteq \mathbfcal{X}$ 
	and $h\equiv1$ provides a constant $K>0$ such that 
	$\|\mathbfcal{A}_0\boldsymbol{x}_n\|_{\mathbfcal{X}^*}\leq K$ 
	for all $n\in\mathbb{N}$. Thus, 
	$(\mathbfcal{A}\boldsymbol{x}_n)_{n\in\mathbb{N}}\subseteq \mathbfcal{X}^*$ 
	is bounded and by dint of the reflexivity of $\mathbfcal{X}^*$ 
	we extract a subsequence 
	$(\mathbfcal{A}\boldsymbol{x}_n)_{n\in\Lambda}\subseteq \mathbfcal{X}^*$, 
	with $\Lambda\subseteq\mathbb{N}$, and an element
	$\boldsymbol{\xi}\in \mathbfcal{X}^*$ such that
	\begin{align*}
	\mathbfcal{A}\boldsymbol{x}_n
	\overset{n\rightarrow\infty}{\rightharpoonup}
	\boldsymbol{\xi}\quad\text{ in }\mathbfcal{X}^*\quad(n\in\Lambda).
	\end{align*}
	We infer $\mathbfcal{A}_U\boldsymbol{x}_n=(\text{id}_{\mathbfcal{X}_U})^*
	\mathbfcal{A}\boldsymbol{x}_n
	\overset{n\rightarrow\infty}{\rightharpoonup}(\text{id}_{\mathbfcal{X}_U})^*
	\boldsymbol{\xi}$ in $\mathbfcal{X}_U^*$ $(n\in\Lambda)$,
	 i.e. $(\text{id}_{\mathbfcal{X}_U})^*\boldsymbol{\xi}=
	 \boldsymbol{\xi}_U\text{ in }\mathbfcal{X}_U^*$,
	  from the weak continuity of $(\text{id}_{\mathbfcal{X}_U})^*
	  :\mathbfcal{X}^*\rightarrow\mathbfcal{X}_U^*$. Finally, 
	  we use \eqref{eq:4.4} once more to obtain
	\begin{align*}
	\limsup_{\substack{n\rightarrow\infty\\n\in\Lambda}}{\langle 
		\mathbfcal{A}\boldsymbol{x}_n,\boldsymbol{x}_n
		\rangle_{\mathbfcal{X}}}&=
	\limsup_{\substack{n\rightarrow\infty\\n\in\Lambda}}{\langle 
		\mathbfcal{A}_U\boldsymbol{x}_n,\boldsymbol{x}_n\rangle_{\mathbfcal{X}_U}}
	\leq \limsup_{n\rightarrow\infty}{\langle \mathbfcal{A}_U\boldsymbol{x}_n,
		\boldsymbol{x}_n\rangle_{\mathbfcal{X}_U}}\\&\leq \langle\boldsymbol{\xi}_U,
	\boldsymbol{x}\rangle_{\mathbfcal{X}_U}=\langle (\text{id}_{\mathbfcal{X}_U})^*
	\boldsymbol{\xi},\boldsymbol{x}\rangle_{\mathbfcal{X}_U}
	=\langle \boldsymbol{\xi},\boldsymbol{x}\rangle_{\mathbfcal{X}}.
	\end{align*}
	Altogether, $(\boldsymbol{x}_n)_{n\in\mathbb{N}}\subseteq 
	{\mathbfcal{X}}\cap_{\boldsymbol{j}}{\mathbfcal{Y}}$ satisfies 
	\eqref{eq:3.2}--\eqref{eq:3.5} with respect to $\mathbfcal{X}$ and 
	$\mathbfcal{Y}$, and the $C^0$-Bochner condition (M) of 
	$\mathbfcal{A}: \mathbfcal{X}\cap_{\boldsymbol{j}}\mathbfcal{Y}\rightarrow\mathbfcal{X}^*$ 
	finally yields $\mathbfcal{A}\boldsymbol{x}=\boldsymbol{\xi}$ in $\mathbfcal{X}^*$, 
	and therefore $\mathbfcal{A}_U\boldsymbol{x}=
	(\text{id}_{\mathbfcal{X}_U})^*\mathbfcal{A}\boldsymbol{x}
	=(\text{id}_{\mathbfcal{X}_U})^*\boldsymbol{\xi}=\boldsymbol{\xi}_U$ in 
	$\mathbfcal{X}_U^*$.\hfill$\square$
\end{proof}

\section{Main Theorem: (Separable case)}
\label{sec:5}

\begin{thm}\label{5.1}
	Let $(V,H,j)$ be an evolution triple, $V$ separable and $1<p<\infty$. 
	Furthermore, we require the following conditions:
	\begin{description}[(iii)]
		\item[(i)] $\mathbfcal{A}_0:\mathbfcal{X}\rightarrow \mathbfcal{X}^*$
		 is monotone.
		\item[(ii)] $\mathbfcal{B}:\mathbfcal{X}\cap_{\boldsymbol{j}}\mathbfcal{Y}
		\rightarrow \mathbfcal{X}^*$ is bounded.
		\item[(iii)] $\mathbfcal{A}:=\mathbfcal{A}_0+\mathbfcal{B}:
		\mathbfcal{X}\cap_{\boldsymbol{j}}\mathbfcal{Y}\rightarrow \mathbfcal{X}^*$
		 satisfies the $C^0$-Bochner condition (M) and is $C^0$-Bochner 
		 coercive with respect to $\boldsymbol{f}\in \mathbfcal{X}^*$ and $\boldsymbol{y}_0\in H$.
	\end{description}
	Then there exists a solution 
	$\boldsymbol{y}\in \mathbfcal{W}$ of 
	\begin{align*}
		\begin{alignedat}{2}
	\frac{d_e\boldsymbol{y}}{dt}+\mathbfcal{A}\boldsymbol{y}&=\boldsymbol{f}&&\quad\text{ in }\mathbfcal{X}^*,\\
	(\boldsymbol{j\boldsymbol{y}})(0)&=\boldsymbol{y}_0&&\quad\text{ in }H.
		\end{alignedat}
	\end{align*}
\end{thm}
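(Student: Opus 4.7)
The plan is to extend the Galerkin strategy of \cite{KR19} to the non-induced setting by combining it with operator-theoretic ideas from \cite{Bro68,BH72}. Using separability of $V$, I first fix an increasing family $(V_n)_{n\in\mathbb{N}}$ of finite-dimensional subspaces with $\overline{\bigcup_{n\in\mathbb{N}} V_n}^{\|\cdot\|_V} = V$ together with approximative initial values $\boldsymbol{y}_0^n \in H_{V_n}$ satisfying $\boldsymbol{y}_0^n \to \boldsymbol{y}_0$ in $H$ and $\|\boldsymbol{y}_0^n\|_H \leq \|\boldsymbol{y}_0\|_H$, yielding a Galerkin basis of $(V,\boldsymbol{y}_0)$ in the sense of Remark \ref{4.1}. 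By Lemma \ref{4.5} the restrictions $\mathbfcal{A}_{V_n} = \mathbfcal{A}_{0,V_n} + \mathbfcal{B}_{V_n}$ inherit the $C^0$-Bochner condition (M), $C^0$-Bochner coercivity with respect to $(\boldsymbol{f}_{V_n},\boldsymbol{y}_0^n)$, the monotonicity of $\mathbfcal{A}_0$ and the boundedness of $\mathbfcal{B}$; Proposition \ref{3.6} (ii) combined with the local boundedness supplied by Proposition \ref{2.2} (i) yields in addition demi-continuity of $\mathbfcal{A}_{V_n}$.

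For each $n\in\mathbb{N}$, the hardest step is to secure a solution $\boldsymbol{y}_n \in \mathbfcal{W}_{V_n}$ of the Galerkin system from Remark \ref{4.1} (v). Carath\'eodory's theorem as used in \cite{KR19} is unavailable since $\mathbfcal{A}$ is not induced; instead, after a shift by a lift $\boldsymbol{w}_n \in \mathbfcal{W}_{V_n}$ of $\boldsymbol{y}_0^n$ to reduce to homogeneous initial data, an abstract surjectivity theorem is to be invoked for the sum of the linear maximal monotone operator $\frac{d_{e_{V_n}}}{dt}$ on $\mathbfcal{X}_{V_n}$ and the shifted perturbation, exploiting the monotonicity of $\mathbfcal{A}_{0,V_n}$, the boundedness and demi-continuity of the remainder, the $C^0$-Bochner coercivity of $\mathbfcal{A}$, and \eqref{eq:4.3}. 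Once $\boldsymbol{y}_n$ is in hand, testing the Galerkin equation with $\boldsymbol{y}_n \chi_{\left[0,t\right]}$ and using \eqref{eq:4.3} together with \eqref{eq:4.4} yields
\begin{align*}
\tfrac{1}{2}\|(\boldsymbol{j}\boldsymbol{y}_n)(t)\|_H^2 + \langle\mathbfcal{A}\boldsymbol{y}_n - \boldsymbol{f}, \boldsymbol{y}_n \chi_{\left[0,t\right]}\rangle_{\mathbfcal{X}} = \tfrac{1}{2}\|\boldsymbol{y}_0^n\|_H^2 \leq \tfrac{1}{2}\|\boldsymbol{y}_0\|_H^2 \quad \text{for all } t\in\overline{I},
\end{align*}
so $C^0$-Bochner coercivity of $\mathbfcal{A}$ delivers a uniform bound $\|\boldsymbol{y}_n\|_{\mathbfcal{X}\cap_{\boldsymbol{j}}\mathbfcal{Y}} \leq M$. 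Combining boundedness of $\mathbfcal{B}$ with Proposition \ref{2.2} (ii) applied to the monotone operator $\mathbfcal{A}_0$ gives uniform boundedness of $(\mathbfcal{A}\boldsymbol{y}_n)_{n\in\mathbb{N}} \subseteq \mathbfcal{X}^*$, and after extraction one has $\boldsymbol{y}_n \rightharpoonup \boldsymbol{y}$ in $\mathbfcal{X}\cap_{\boldsymbol{j}}\mathbfcal{Y}$ and $\mathbfcal{A}\boldsymbol{y}_n \rightharpoonup \boldsymbol{\xi}$ in $\mathbfcal{X}^*$.

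To identify $\boldsymbol{y}$ with the desired solution, I fix $m\in\mathbb{N}$, test the Galerkin equation for $n \geq m$ against $v\varphi$ with $v \in V_m$ and $\varphi \in C_0^\infty(I)$, and pass to the limit; density of $\bigcup_m V_m$ in $V$ together with Proposition \ref{2.11} then produces $\boldsymbol{y} \in \mathbfcal{W}$ with $\frac{d_e \boldsymbol{y}}{dt} + \boldsymbol{\xi} = \boldsymbol{f}$ in $\mathbfcal{X}^*$, while Proposition \ref{2.6} together with $\boldsymbol{y}_0^n \to \boldsymbol{y}_0$ in $H$ yields the initial condition $(\boldsymbol{j}\boldsymbol{y})(0) = \boldsymbol{y}_0$. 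Evaluating the energy identity above at $t = T$, and using weak lower semicontinuity of $\|\cdot\|_H$ applied to $(\boldsymbol{j}\boldsymbol{y}_n)(T) \rightharpoonup (\boldsymbol{j}\boldsymbol{y})(T)$ in $H$, the convergence $\|\boldsymbol{y}_0^n\|_H \to \|\boldsymbol{y}_0\|_H$, and the limit equation tested against $\boldsymbol{y}$ via Proposition \ref{2.12} (ii), one arrives at
\begin{align*}
\limsup_{n\to\infty}\langle\mathbfcal{A}\boldsymbol{y}_n, \boldsymbol{y}_n\rangle_{\mathbfcal{X}} \leq \langle\boldsymbol{f}, \boldsymbol{y}\rangle_{\mathbfcal{X}} - \tfrac{1}{2}\|(\boldsymbol{j}\boldsymbol{y})(T)\|_H^2 + \tfrac{1}{2}\|\boldsymbol{y}_0\|_H^2 = \langle\boldsymbol{\xi}, \boldsymbol{y}\rangle_{\mathbfcal{X}},
\end{align*}
whereupon the $C^0$-Bochner condition (M) enforces $\mathbfcal{A}\boldsymbol{y} = \boldsymbol{\xi}$ and closes the argument. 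The main obstacle is unquestionably the Galerkin existence step: one must reconcile the fact that $\mathbfcal{A}_{V_n}$ is defined only on the intersection space and satisfies merely condition (M) rather than full pseudo-monotonicity with the hypotheses of the abstract surjectivity result chosen to replace Carath\'eodory's theorem.
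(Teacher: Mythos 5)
Your outline captures the correct overall architecture (Galerkin basis, energy estimate via integration by parts, $C^{0}$-Bochner coercivity for a priori bounds, $C^{0}$-Bochner condition (M) for the identification), but there are two places where you gloss over or misdirect the key difficulties, and in both cases the paper takes a genuinely different route.

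\emph{Galerkin existence.} You propose to lift the initial data and then apply an abstract surjectivity theorem for the sum of the time derivative and the perturbation. As you concede, you have not specified which surjectivity result applies: the classical theorems (Brezis, Lions) require the perturbation to be pseudo-monotone or at least $\frac{d}{dt}$-pseudo-monotone on all of $\mathbfcal{X}_{V_n}$, whereas $\mathbfcal{A}_{V_n}$ is only known to be demi-continuous and to satisfy the $C^{0}$-Bochner condition (M), and is defined only on the intersection $\mathbfcal{X}_{V_n}\cap_{\boldsymbol{j}_{V_n}}\mathbfcal{Y}_{V_n}=C^{0}(\overline{I},V_n)$. The paper resolves this in a completely different way: it rewrites the Galerkin system as an equivalent $V_n$-valued fixed-point problem for an integral operator $\mathbfcal{F}_n$, then constructs a compression $\tau_n$ (via the two cut-off functions $g$, $h$ in \eqref{eq:5.9}--\eqref{eq:5.11}) so that $\tau_n\mathbfcal{F}_n$ maps a compact convex set of $C^{0}(\overline{I},V_n)$ into itself while having the same fixed-point set as $\mathbfcal{F}_n$, and applies Schauder. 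The a priori estimate plus Proposition~\ref{2.2}(ii) are used precisely to show the two fixed-point problems are equivalent. Your paragraph names the obstacle but does not produce a mechanism to overcome it.

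\emph{Weak convergence in $\mathbfcal{X}\cap_{\boldsymbol{j}}\mathbfcal{Y}$.} You assert that from the uniform bounds, ``after extraction one has $\boldsymbol{y}_n\rightharpoonup\boldsymbol{y}$ in $\mathbfcal{X}\cap_{\boldsymbol{j}}\mathbfcal{Y}$.'' This is false as stated: $\mathbfcal{Y}=C^{0}(\overline{I},H)$ is not reflexive, so boundedness of $(\boldsymbol{j}\boldsymbol{y}_n)_n$ in $\mathbfcal{Y}$ only yields weak-$*$ convergence of a subsequence in $L^{\infty}(I,H)$, not pointwise weak convergence in $H$ for every $t\in\overline{I}$. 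Establishing $(\boldsymbol{j}\boldsymbol{y}_n)(t)\rightharpoonup(\boldsymbol{j}\boldsymbol{y})(t)$ in $H$ for all $t$ is the substance of the paper's Step 3.3: for each fixed $t$ one extracts a $t$-dependent subsequence with weak limit $\boldsymbol{y}_{\Lambda_t}$, identifies $\boldsymbol{y}_{\Lambda_t}=(\boldsymbol{j}\boldsymbol{y})(t)$ by testing the Galerkin equations on $[0,t]$ and passing to the limit using the nested structure $V_k\subseteq V_{k+1}$, and upgrades the subsequence to the whole sequence by the standard convergence principle. This step is exactly why the increasing Galerkin structure is indispensable (as the paper emphasizes in the introduction) and cannot be replaced by a compactness argument. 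Consequently your use of $(\boldsymbol{j}\boldsymbol{y}_n)(T)\rightharpoonup(\boldsymbol{j}\boldsymbol{y})(T)$ and of Proposition~\ref{2.6} for the initial trace both presuppose a convergence that you have not established. Until these two gaps are filled, the argument does not close.
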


\paragraph{\textbf{Proof}}

\paragraph{\textbf{0. Reduction of assumptions:}}
It suffices to treat the special case $\boldsymbol{f}=\boldsymbol{0}$
 in $\mathbfcal{X}^*$. Otherwise, we switch to 
 $\widehat{\mathbfcal{A}}:=\mathbfcal{A}_0+\widehat{\mathbfcal{B}}:
 \mathbfcal{X}\cap_{\boldsymbol{j}}\mathbfcal{Y}\rightarrow \mathbfcal{X}^*$ 
 with the shifted bounded part $\widehat{\mathbfcal{B}}:=
 \mathbfcal{B}-\boldsymbol{f}:\mathbfcal{X}\cap_{\boldsymbol{j}}
 \mathbfcal{Y}\rightarrow \mathbfcal{X}^*$. It is straightforward 
 to check that $\widehat{\mathbfcal{A}}$ still satisfies the 
 $C^0$-Bochner condition (M) and is $C^0$-Bochner coercive 
 with respect to $\boldsymbol{0}\in \mathbfcal{X}^*$ and $\boldsymbol{y}_0\in H$. 

\paragraph{\textbf{1. Galerkin approximation:}}
We apply the Galerkin approach of Section \ref{sec:4}. 
As Galerkin basis of $(V,\boldsymbol{y}_0)$ will serve a sequence 
$(V_n,ja_n)_{n\in\mathbb{N}}$ with the following properties:
\begin{itemize}
	\item[$\bullet$] $V_n \subseteq V_{n+1}\subseteq V,\;\dim V_n<\infty
	\text{ and }\overline{\bigcup_{n\in\mathbb{N}}{V_n}}^{\|.\|_V}=V$.
	\item[$\bullet$] $a_n\in V_n,\;ja_n\overset{n\rightarrow\infty}{\rightarrow}
	\boldsymbol{y}_0\text{ in }H\text{ and }\|ja_n\|_H\leq\|\boldsymbol{y}_0\|_H$.
\end{itemize}
The existence of such a sequence is a consequence of the 
separability of $V$ in conjunction with the given evolution triple structure. 
The well-posedness of the Galerkin system with respect to 
$(V_n,ja_n)_{n\in\mathbb{N}}$ follows as in Remark \ref{4.1}. 
We denote by $\boldsymbol{y}_n\in \mathbfcal{W}_{V_n}$ 
the \textbf{n.th Galerkin solution} if
\begin{align}\begin{split}
\begin{alignedat}{2}
\frac{d_{e_{V_n}}\boldsymbol{y}_n}{dt}+
\mathbfcal{A}_{V_n}\boldsymbol{y}_n&=\boldsymbol{0}&&\quad\text{ in }\mathbfcal{X}_{V_n}^*,\\
(\boldsymbol{j}_{V_n}\boldsymbol{y}_n)(0)&=ja_n&&\quad\text{ in }H_{V_n}.
\end{alignedat}
\end{split}\label{eq:5.2}
\end{align}

\paragraph{\textbf{2. Existence of Galerkin solutions:}} 
As the operator $\mathbfcal{A}$ is not necessarily induced, Carath\'eodory's 
theorem is not available.
However, we will prove the existence of Galerkin solutions similarly 
to Carath\'eodory's theorem by translating \eqref{eq:5.2} into an equivalent 
fixed point problem and then exploiting an appropriate version of 
Schauder's fixed point theorem. To this end, we first translate 
\eqref{eq:5.2} into an equivalent differential equation with values in $V_n$ 
instead of $V_n^*$, to have a chance to meet the in Schauder's fixed 
point theorem demanded self map property.

\paragraph{\textbf{2.1 Equivalent differential equation:}}
\hypertarget{2.1}{}
As $e_{V_n}:V_n\rightarrow V_n^*$ is an isomorphism 
Proposition \ref{2.4} ensures that  the induced operator 
$\boldsymbol{e}_n:L^{p'}(I,V_n)\rightarrow \mathbfcal{X}_{V_n}^*$, 
given via $(\boldsymbol{e}_n\boldsymbol{x})(t):=e_{V_n}(\boldsymbol{x}(t))$ 
for almost every $t\in I$ and all $\boldsymbol{x}\in L^{p'}(I,V_n)$, 
is also an isomorphism. Apart from this, Proposition~\ref{2.8} additionally 
yields that $\boldsymbol{e}_n:W^{1,p'}(I,V_n)\rightarrow W^{1,p'}(I,V_n^*)$ 
is an isomorphism and that for all $\boldsymbol{x}\in W^{1,p'}(I,V_n)$ it holds
\begin{align}
\frac{d_{V_n^*}\boldsymbol{e}_n\boldsymbol{x}}{dt}=
\boldsymbol{e}_n\frac{d_{V_n}\boldsymbol{x}}{dt}
\quad\text{ in  }\mathbfcal{X}_{V_n}^*.\label{eq:f}
\end{align} 
Using Proposition \ref{2.7} and \eqref{eq:2.7a} we see that 
$\boldsymbol{y}_n\in\mathbfcal{W}_{V_n}$ satisfies $\eqref{eq:5.2}_1$ 
if and only if $\boldsymbol{y}_n\in \mathbfcal{X}_{V_n}$ 
and $\boldsymbol{e}_n\boldsymbol{y}_n\in W^{1,p'}(I,V_n^*)$ with
\begin{align}
\frac{d_{V_n^*}\boldsymbol{e}_n\boldsymbol{y}_n}{dt}
=\frac{d_{e_{V_n}}\boldsymbol{y}_n}{dt}=
-\mathbfcal{A}_{V_n}\boldsymbol{y}_n\quad\text{ in }\mathbfcal{X}_{V_n}^*.\label{eq:zw}
\end{align}
By exploiting Proposition \ref{2.8} and \eqref{eq:f} we 
further deduce the equivalence of \eqref{eq:zw} and
$\boldsymbol{y}_n=\boldsymbol{e}_n^{-1}\boldsymbol{e}_n\boldsymbol{y}_n
\in W^{1,p'}(I,V_n)$ with
\begin{align*}
\frac{d_{V_n}\boldsymbol{y}_n}{dt}=
\boldsymbol{e}_n^{-1}\frac{d_{V_n^*}\boldsymbol{e}_n\boldsymbol{y}_n}{dt}=
-\boldsymbol{e}_n^{-1}\mathbfcal{A}_{V_n}\boldsymbol{y}_n\quad\text{ in }L^{p'}(I,V_n).
\end{align*}
Proposition \ref{2.9} (i) provides the choice function 
$\{\cdot\}_X:W^{1,p'}(I,V_n)\rightarrow C^0(\overline{I},V_n)$. Thus, 
$\boldsymbol{y}_n\in \mathbfcal{W}_{V_n}$ satisfies $\eqref{eq:5.2}_2$ 
if and only if $\{\boldsymbol{y}_n\}_{V_n}(0)=a_n$ in $V_n$ in the sense of 
the unique continuous representation $\{\boldsymbol{y}_n\}_{V_n}\in C^0(\overline{I},V_n)$. 
Altogether, $\boldsymbol{y}_n\in \mathbfcal{W}_{V_n}$ is a solution 
of \eqref{eq:5.2} if and only if $\boldsymbol{y}_n\in W^{1,p'}(I,V_n)$ with
\begin{align}
\begin{split}
\begin{alignedat}{2}
\frac{d_{V_n}\boldsymbol{y}_n}{dt}&=
-\boldsymbol{e}_n^{-1}\mathbfcal{A}_{V_n}\boldsymbol{y}_n&&\quad\text{ in }L^{p'}(I,V_n),\\
\{\boldsymbol{y}_n\}_{V_n}(0)&=a_n&&\quad\text{ in }V_n.
\end{alignedat}
\end{split}\label{eq:5.3}
\end{align}

\paragraph{\textbf{2.2 Equivalent fixed point problem:}}
\hypertarget{2.2}{}
From $C^0(\overline{I},V_n)=\mathbfcal{X}_{V_n}\cap_{\boldsymbol{j}_{V_n}}
\mathbfcal{Y}_{V_n}$ with norm equivalence and Proposition \ref{2.9} (ii) we 
deduce the well-definedness of the fixed point operator 
$\mathbfcal{F}_n:C^0(\overline{I},V_n)\rightarrow W^{1,p'}(I,V_n)$ defined by
\begin{align*}
(\mathbfcal{F}_n\boldsymbol{x})(t)&:=a_n-(\mathbfcal{V}\boldsymbol{e}_n^{-1}
\mathbfcal{A}_{V_n}\boldsymbol{x})(t)\\&\,=
a_n-\int_0^t{(\boldsymbol{e}_n^{-1}\mathbfcal{A}_{V_n}\boldsymbol{x})(s)\;ds}
\quad\text{ in }V_n\quad\text{ for all }t\in\overline{I}
\end{align*}
and every $\boldsymbol{x}\in C^0(\overline{I},V_n)$. In addition, the embedding $W^{1,p'}(I,V_n)\hookrightarrow C^0(\overline{I},V_n)$ 
(cf.~Proposition \ref{2.9} (i)) provides the well-definedness of 
$\mathbfcal{F}_n:W^{1,p'}(I,V_n)\subseteq C^0(\overline{I},V_n)\rightarrow W^{1,p'}(I,V_n)$. 
Analogously to the theory of ordinary differential equations we 
conclude under the renewed application of Proposition \ref{2.9} 
the equivalence of \eqref{eq:5.3} and the existence of a fixed point 
of $\mathbfcal{F}_n:W^{1,p'}(I,V_n)\subseteq C^0(\overline{I},V_n)\rightarrow W^{1,p'}(I,V_n)$.

\paragraph{\textbf{2.3 Existence of a fixed point of \textnormal{$\mathbfcal{F}_n$}:}}
The verification of the existence of a fixed point is based on the following 
version of Schauder's fixed point theorem:
\textit{
	\paragraph{\textit{Theorem 5.2: (Schauder, 1930)}}
	Let ${\cal(X,\|\cdot\|_X)}$ be a Banach space, ${\cal F:K\subseteq X\rightarrow K}$ 
	a continuous operator and ${\cal K\subseteq X}$ a non-empty, 
	convex and compact set. Then there exists $x\;{\cal\in K}$ such that
	\begin{align*}
	{\cal F}x=x\quad{\cal\text{ in }X.}
	\end{align*}
	\paragraph{\textit{Proof}} See \cite[Kapitel 1, Satz 2.46]{Ru04}.\hfill $\square$}\\
\\It remains to verify the assumptions of Schauder's fixed point theorem.
\begin{description}
	\item[(i)]\textbf{Continuity of }$\mathbfcal{F}_n$: 
	Lemma \ref{4.5} in conjunction with Proposition \ref{3.6} (i) 
	yields the demi-continuity of $\mathbfcal{A}_{V_n}:
	\mathbfcal{X}_{V_n}\cap_{\boldsymbol{j}_{V_n}}\mathbfcal{Y}_{V_n}
	\rightarrow \mathbfcal{X}_{V_n}^*$. Thus, as it holds 
	$C^0(\overline{I},V_n)=\mathbfcal{X}_{V_n}\cap_{\boldsymbol{j}_{V_n}}\mathbfcal{Y}_{V_n}$
	 with norm equivalence, and both $\boldsymbol{e}_n^{-1}:\mathbfcal{X}_{V_n}^*
	 \rightarrow L^{p'}(I,V_n)$ and $\mathbfcal{V}:L^{p'}(I,V_n)\rightarrow W^{1,p'}(I,V_n)$ 
	 are weakly continuous (cf.~Proposition \ref{2.4} and \ref{2.9} (ii)), 
	 $\mathbfcal{F}_n:C^0(\overline{I},V_n)\rightarrow W^{1,p'}(I,V_n)$ 
	 is demi-continuous. Proposition \ref{2.10} eventually provides the strong 
	 continuity of the embedding $W^{1,p'}(I,V_n)\hookrightarrow C^0(\overline{I},V_n)$ 
	 and consequently the continuity of 
	 $\mathbfcal{F}_n:C^0(\overline{I},V_n)\rightarrow C^0(\overline{I},V_n)$.
	\item[(ii)] \textbf{Self-map property of the compressed fixed point operator:} 
	Since $\mathbfcal{F}_n$  in general fails to comply with the in Schauder's 
	fixed point theorem demanded self-map property, we construct a compression 
	operator $\tau_n:C^0(\overline{I},V_n)\rightarrow\left(0,1\right]$ such that the 
	compressed operator $\tau_n\mathbfcal{F}_n$ meets the self-map property 
	and has coinciding fixed point set with $\mathbfcal{F}_n$. Then it suffices to 
	prove the existence of a fixed point of the compressed fixed point operator.\\
	\noindent\hspace*{5mm}As we are not aware of how to construct the desired 
	compression operator $\tau_n$ we first consider $\tau_n\mathbfcal{F}_n$, with an 
	arbitrary operator $\tau_n:C^0(\overline{I},V_n)\rightarrow\left(0,1\right]$, and 
	demonstrate the existence of a-priori estimates which are independent of this 
	operator.
	\begin{itemize}
		\item[(a)] \textbf{Invariance of the a-priori estimates with respect to compressions:}
		\hypertarget{2.3 (ii) (a)}{}
		We fix an arbitrary operator $\tau_n:C^0(\overline{I},V_n)\rightarrow\left(0,1\right]$ 
		and assume there exists a fixed point $\boldsymbol{y}_n\in W^{1,p'}(I,V_n)$ 
		of $\tau_n\mathbfcal{F}_n:W^{1,p'}(I,V_n)\subseteq C^0(\overline{I},V_n)
		\rightarrow W^{1,p'}(I,V_n)$. Then we deduce analogously to the discussion 
		in Step \hyperlink{2.1}{2.1} and \hyperlink{2.2}{2.2} that 
		$\boldsymbol{y}_n\in \mathbfcal{W}_{V_n}$ satisfies
		\begin{align}
		\begin{split}
		\begin{alignedat}{2}
		\frac{d_{e_{V_n}}\boldsymbol{y}_n}{dt}+\tau_n(\boldsymbol{y}_n)
		\mathbfcal{A}_{V_n}\boldsymbol{y}_n&=\boldsymbol{0}&&\quad\text{ in }\mathbfcal{X}_{V_n}^*,\\
		(\boldsymbol{j}_{V_n}\boldsymbol{y}_n)(0)&
		=\tau_n(\boldsymbol{y}_n) ja_n&&\quad\text{ in }H_{V_n}.
		\end{alignedat}
		\end{split}\label{eq:5.4}
		\end{align} 
		Testing \eqref{eq:5.4} by $\boldsymbol{y}_n\chi_{\left[0,t\right]}
		\in \mathbfcal{X}_{V_n}$, 
		where $t\in\left(0,T\right]$ is arbitrary, and a subsequent application 
		of the generalized integration by parts formula \eqref{eq:4.3} and identity \eqref{eq:4.4} with $U=V_n$ yield
		\begin{align}
		\begin{split}
		\tau_n(\boldsymbol{y}_n)\langle \mathbfcal{A}\boldsymbol{y}_n,
		\boldsymbol{y}_n\chi_{\left[0,t\right]}\rangle_\mathbfcal{X}&
		=\tau_n(\boldsymbol{y}_n)\langle\mathbfcal{A}_{V_n}\boldsymbol{y}_n,\boldsymbol{y}_n
		\chi_{\left[0,t\right]}\rangle_{\mathbfcal{X}_{V_n}}\\&=-\left\langle\frac{d_{e_{V_n}}
			\boldsymbol{y}_n}{dt},\boldsymbol{y}_n\chi_{\left[0,t\right]}\right
		\rangle_{\mathbfcal{X}_{V_n}}\\&=-\frac{1}{2}\|(\boldsymbol{j}_{V_n}
		\boldsymbol{y}_n)(t)\|_H^2+\frac{\tau_n(\boldsymbol{y}_n)^2}{2}\|ja_n\|_H^2.
		\end{split}\label{eq:5.5}
		\end{align}
		From dividing \eqref{eq:5.5} by $0<\tau_n(\boldsymbol{y}_n)\leq 1$ and 
		using $\|ja_n\|_H\leq \|\boldsymbol{y}_0\|_H$ we further obtain 
		\begin{align}
		\frac{1}{2\tau_n(\boldsymbol{y}_n)}\|(\boldsymbol{j}\boldsymbol{y}_n)(t)\|_H^2
		+\langle \mathbfcal{A}\boldsymbol{y}_n,\boldsymbol{y}_n\chi_{\left[0,t\right]}
		\rangle_{\mathbfcal{X}}\leq\frac{\tau_n(\boldsymbol{y}_n)}{2}\| \boldsymbol{y}_0\|_H^2
		\leq\frac{1}{2}\| \boldsymbol{y}_0\|_H^2.\label{eq:5.6}
		\end{align}
		As $t\in \overline{I}$ was arbitrary, $2^{-1}\leq (2\tau_n(\boldsymbol{y}_n))^{-1}$ 
		and $\mathbfcal{A}:\mathbfcal{X}\cap_{\boldsymbol{j}}\mathbfcal{Y}\rightarrow 
		\mathbfcal{X}^*$ is $C^0$-Bochner coercive with respect to 
		$\boldsymbol{0}\in\mathbfcal{X}^*$ and $\boldsymbol{y}_0\in H$ there exists an $n$-independent 
		constant $M>0$ such that
		\begin{align}
		\|\boldsymbol{y}_n\|_{\mathbfcal{X}\cap_{\boldsymbol{j}}\mathbfcal{Y}}
		\leq M.\label{eq:5.7}
		\end{align} 
		The boundedness of $\mathbfcal{B}:\mathbfcal{X}\cap_{\boldsymbol{j}}
		\mathbfcal{Y}\rightarrow \mathbfcal{X}^*$ and \eqref{eq:5.7} further yield 
		an $n$-independent constant $C>0$ such that 
		$\|\mathbfcal{B}\boldsymbol{y}_n\|_{\mathbfcal{X}^*} \leq C$. 
		From this, \eqref{eq:5.6} in the case $t=T$, and \eqref{eq:5.7} we obtain
		\begin{align*}
		\langle \mathbfcal{A}_0\boldsymbol{y}_n,\boldsymbol{y}_n\rangle_\mathbfcal{X}&
		=\langle \mathbfcal{A}\boldsymbol{y}_n,\boldsymbol{y}_n\rangle_\mathbfcal{X}
		-\langle \mathbfcal{B}\boldsymbol{y}_n,\boldsymbol{y}_n\rangle_\mathbfcal{X}
		\\&\leq \frac{1}{2}\| \boldsymbol{y}_0\|_H^2+CM.
		\end{align*}
		Finally, Proposition \ref{2.2} with $S=(\boldsymbol{y}_n)_{n\in\mathbb{N}}
		\subseteq\mathbfcal{X}$ and $h\equiv1$ provides an $n$-independent 
		constant $M'>0$ such that
		\begin{align}
		\|\mathbfcal{A}\boldsymbol{y}_n\|_{\mathbfcal{X}^*}\leq M'.\label{eq:5.8}
		\end{align}
		\item[(b)] \textbf{Construction of the compression operator:} The demi-continuity of 
		$\mathbfcal{A}:\mathbfcal{X}\cap_{\boldsymbol{j}}\mathbfcal{Y}\rightarrow\mathbfcal{X}^*$ 
		(cf.~Propositions \ref{2.2} (i) and \ref{3.6} (ii)) and the embedding 
		$C^0(\overline{I},V_n)\hookrightarrow\mathbfcal{X}\cap_{\boldsymbol{j}}\mathbfcal{Y}$ 
		imply the continuity of
		\begin{align*}
		(\boldsymbol{x}\mapsto \vert\langle \mathbfcal{A}\boldsymbol{x},\boldsymbol{x}\rangle_{\mathbfcal{X}}\vert):
		C^0(\overline{I},V_n)\rightarrow\mathbb{R}_{\ge 0}\qquad
		\text{ and }\qquad\|\cdot\|_{\mathbfcal{X}\cap_{\boldsymbol{j}}\mathbfcal{Y}}:
		C^0(\overline{I},V_n)\rightarrow \mathbb{R}_{\ge 0}.
		\end{align*}
		From this we deduce the continuity of 
		$g,h:C^0(\overline{I},V_n)\rightarrow \mathbb{R}_{\ge 0}$ 
		defined by
		\begin{alignat}{2}
		g(\boldsymbol{x})&:=
		\begin{cases}
		1&\text{, if }\|\boldsymbol{x}\|_{\mathbfcal{X}\cap_{\boldsymbol{j}}\mathbfcal{Y}}\leq 2M\\
		\frac{2M}{\|\boldsymbol{x}\|_{\mathbfcal{X}\cap_{\boldsymbol{j}}\mathbfcal{Y}}}&\text{, else}
		\end{cases},\label{eq:5.9}
		\\h(\boldsymbol{x})&:=
		\begin{cases}
		1&\text{, if }\vert\langle \mathbfcal{A}\boldsymbol{\boldsymbol{x}},\boldsymbol{x}\rangle_{\mathbfcal{X}}\vert\leq M'M\\
		\frac{M'M}{\vert\langle \mathbfcal{A}\boldsymbol{x},\boldsymbol{x}\rangle_{\mathbfcal{X}}\vert}&\text{, else}
		\end{cases},\label{eq:5.10}
		\end{alignat}
		for every $\boldsymbol{x}\in C^0(\overline{I},V_n)$. Finally, the compression operator
		\begin{align}
		\tau_n:=(\boldsymbol{x}\mapsto g(h(\boldsymbol{x})\mathbfcal{F}_n\boldsymbol{x})h(\boldsymbol{x})):C^0(\overline{I},V_n)
		\rightarrow \left(0,1\right],\label{eq:5.11}
		\end{align}
		and therefore the compressed fixed point operator 
		$\tau_n \mathbfcal{F}_n:C^0(\overline{I},V_n)\rightarrow C^0(\overline{I},V_n)$ 
		are continuous.
		\item[(c)] \textbf{Equivalence of the fixed point problems:}
		\hypertarget{(2.3) (ii) (c)}{}
		Since $\tau_n:C^0(\overline{I},V_n)\rightarrow \left(0,1\right]$ was still an 
		arbitrary operator in \eqref{eq:5.4}, the a-priori estimates \eqref{eq:5.7} 
		and \eqref{eq:5.8} hold true for both $\tau_n\equiv 1$ and the compression 
		operator defined in \eqref{eq:5.11}. Being more precise, a fixed point 
		$\boldsymbol{y}_n\in W^{1,p'}(I,V_n)$ of $\tau_n\mathbfcal{F}_n$ as 
		well as a fixed point of $\mathbfcal{F}_n$ satisfies the estimates
		\begin{align*}
		\|\boldsymbol{y}_n\|_{\mathbfcal{X}\cap_{\boldsymbol{j}}\mathbfcal{Y}}
		\leq M \qquad\text{ and }\qquad\|\mathbfcal{A}\boldsymbol{y}_n\|_{\mathbfcal{X}^*}\leq M',
		\end{align*}
		with the same $n$-independent constants $M,M'>0$. These imply 
		$\vert\langle \mathbfcal{A}\boldsymbol{y}_n,\boldsymbol{y}_n\rangle_\mathbfcal{X}\vert\leq M'M$ 
		and therefore $h(\boldsymbol{y}_n)=1$ due to the definition of 
		$h$ (cf.~\eqref{eq:5.10}). From this we are able to derive the 
		equivalence of the fixed point problems. In fact, there holds:
		
		\noindent\hspace*{5mm}If $\boldsymbol{y}_n\in W^{1,p'}(I,V_n)$ is a 
		fixed point of $\mathbfcal{F}_n$, then having regard to 
		$\|\boldsymbol{y}_n\|_{\mathbfcal{X}\cap_{\boldsymbol{j}}\mathbfcal{Y}}\leq M<2M$ 
		and the definition of $g$ (cf.~\eqref{eq:5.9}) we deduce
		\begin{align*}
		g(h(\boldsymbol{y}_n)\mathbfcal{F}_n\boldsymbol{y}_n)
		=g(\mathbfcal{F}_n\boldsymbol{y}_n)=g(\boldsymbol{y}_n)=1.
		\end{align*}
		From this we obtain $\tau_n(\boldsymbol{y}_n)=1$ and thus 
		$\tau_n(\boldsymbol{y}_n)\mathbfcal{F}_n\boldsymbol{y}_n=
		\mathbfcal{F}_n\boldsymbol{y}_n=\boldsymbol{y}_n$ in $W^{1,p'}(I,V_n)$. 
		
		\noindent\hspace*{5mm}On the other hand, if $\boldsymbol{y}_n\in W^{1,p'}(I,V_n)$ 
		is a fixed point of the compressed operator $\tau_n\mathbfcal{F}_n$, then
        \begin{align*}
            \tau_n(\boldsymbol{y}_n)
            =g(h(\boldsymbol{y}_n)\mathbfcal{F}_n\boldsymbol{y}_n)h(\boldsymbol{y}_n)
            =1
        \end{align*}
        has to be valid.
		Otherwise, taking into account $h(\boldsymbol{y}_n)=1$ and 
		the definition of $g$ (cf.~\eqref{eq:5.9}),
		$\|\mathbfcal{F}_n\boldsymbol{y}_n\|_{\mathbfcal{X}\cap_{\boldsymbol{j}}\mathbfcal{Y}}> 2M$ would hold true. 
		But this yields the contradiction
		\begin{align*}
		M&\ge \|\boldsymbol{y}_n\|_{\mathbfcal{X}\cap_{\boldsymbol{j}}\mathbfcal{Y}}
		=\|\tau_n(\boldsymbol{y}_n)\mathbfcal{F}_n\boldsymbol{y}_n\|_{\mathbfcal{X}\cap_{\boldsymbol{j}}\mathbfcal{Y}}
		=\|g\left(h(\boldsymbol{y}_n)\mathbfcal{F}_n\boldsymbol{y}_n\right)h(\boldsymbol{y}_n)
		\mathbfcal{F}_n\boldsymbol{y}_n\|_{\mathbfcal{X}\cap_{\boldsymbol{j}}\mathbfcal{Y}}\\&
		=\|g\left(\mathbfcal{F}_n\boldsymbol{y}_n\right)\mathbfcal{F}_n
		\boldsymbol{y}_n\|_{\mathbfcal{X}\cap_{\boldsymbol{j}}\mathbfcal{Y}}
		=\frac{2M}{\|\mathbfcal{F}_n\boldsymbol{y}_n\|_{\mathbfcal{X}\cap_{\boldsymbol{j}}\mathbfcal{Y}}}
		\|\mathbfcal{F}_n\boldsymbol{y}_n\|_{\mathbfcal{X}\cap_{\boldsymbol{j}}\mathbfcal{Y}}=2M>M.
		\end{align*}
		As a consequence, it holds $\mathbfcal{F}_n\boldsymbol{y}_n
		=\tau_n(\boldsymbol{y}_n)\mathbfcal{F}_n\boldsymbol{y}_n=\boldsymbol{y}_n$ 
		in $W^{1,p'}(I,V_n)$.
		\item[(d)] \textbf{Existence of a fixed point of the equivalent compressed problem:}
		\hypertarget{2.3 (ii) (d)}{} We set 
		${\boldsymbol{\mu}_n:=\vertiii{\boldsymbol{e}_n^{-1}}_{\mathcal{L}(\mathbfcal{X}_{V_n}^*,L^{p'}(I,V_n))}}$. 
		Then for arbitrary $\boldsymbol{x}\in C^0(\overline{I},V_n)$ holds:
		\begin{fleqn}[0pt]
			\begin{align}
			&\|\tau_n(\boldsymbol{x})\mathbfcal{F}_n\boldsymbol{x}\|_{\mathbfcal{X}\cap_{\boldsymbol{j}}\mathbfcal{Y}}
			=\begin{cases}\|h(\boldsymbol{x})\mathbfcal{F}_n\boldsymbol{x}\|_{\mathbfcal{X}\cap_{\boldsymbol{j}}\mathbfcal{Y}}&\text{, if }\|h(\boldsymbol{x})\mathbfcal{F}_n\boldsymbol{x}\|_{\mathbfcal{X}\cap_{\boldsymbol{j}}\mathbfcal{Y}}\leq 2M\\
			\frac{2M\left\|h(\boldsymbol{x})\mathbfcal{F}_n\boldsymbol{x}\right
				\|_{\mathbfcal{X}\cap_{\boldsymbol{j}}\mathbfcal{Y}}}
			{\left\|h(\boldsymbol{x})\mathbfcal{F}_n\boldsymbol{x}\right\|_{\mathbfcal{X}\cap_{\boldsymbol{j}}\mathbfcal{Y}}}
			&\text{, else }
			\end{cases}\Biggr\rbrace\leq 2M.\label{eq:5.12}
			\end{align}
			\begin{align}
			\begin{split}
				\left\|\frac{d_{V_n}\tau_n(\boldsymbol{x})\mathbfcal{F}_n\boldsymbol{x}}{dt}\right\|_{L^{p'}(I,V_n)}&
				=\left\|\frac{d_{V_n}}{dt}(\tau_n(\boldsymbol{x})
				\mathbfcal{V}\boldsymbol{e}_n^{-1}\mathbfcal{A}_{V_n}\boldsymbol{x})\right\|_{L^{p'}(I,V_n)}\\
			&\leq\left\|h(\boldsymbol{x})\frac{d_{V_n}}{dt}(\mathbfcal{V}
			\boldsymbol{e}_n^{-1}\mathbfcal{A}_{V_n}\boldsymbol{x})\right\|_{L^{p'}(I,V_n)}
			\\&= \left\|h(\boldsymbol{x})\boldsymbol{e}_n^{-1}(\text{id}_{\mathbfcal{X}_{V_n}}
			)^*\mathbfcal{A}\boldsymbol{x}\right\|_{L^{p'}(I,V_n)}
			\\&\leq \boldsymbol{\mu}_n\|(\text{id}_{\mathbfcal{X}_{V_n}})^*h(\boldsymbol{x})
			\mathbfcal{A}\boldsymbol{x}\|_{\mathbfcal{X}_{V_n}^*}\leq  \boldsymbol{\mu}_n\|h(\boldsymbol{x})\mathbfcal{A}\boldsymbol{x}\|_{\mathbfcal{X}^*}.
			\end{split}\label{eq:5.13}
			\end{align}
		\end{fleqn}
		In the first inequality in \eqref{eq:5.13} we made use of 
		$\vert g(h\cdot\mathbfcal{F}_n)\vert\leq  1$. The subsequent equal sign 
		and inequality stem from $\frac{d_{V_n}}{dt}\mathbfcal{V}=\text{id}_{L^{p'}(I,V_n)}$ 
		(cf.~Proposition \ref{2.9} (ii)), the definition of $\mathbfcal{A}_{V_n}$ 
		(cf.~Remark \ref{4.1}) and $\vertiii{(\text{id}_{\mathbfcal{X}_{V_n}})^*}_{\mathcal{L}(\mathbfcal{X}^*,\mathbfcal{X}_{V_n}^*)}
		=\vertiii{\text{id}_{\mathbfcal{X}_{V_n}}}_{\mathcal{L}(\mathbfcal{X}_{V_n},\mathbfcal{X})}\leq 1$.
		
		\noindent\hspace*{5mm}Next, we fix the closed ball
		\begin{align*}
		\mathbfcal{S}:=B_{2M}^{\mathbfcal{X}\cap_{\boldsymbol{j}}\mathbfcal{Y}}(0)
		\subseteq \mathbfcal{X}\cap_{\boldsymbol{j}}\mathbfcal{Y}.
		\end{align*}
		From the boundedness of $\mathbfcal{B}:\mathbfcal{X}
		\cap_{\boldsymbol{j}}\mathbfcal{Y}\rightarrow\mathbfcal{X}^*$ we obtain a constant 
		$K_0>0$ such that $\|\mathbfcal{B}\boldsymbol{x}\|_{\mathbfcal{X}^*}\leq K_0$ for 
		all $\boldsymbol{x}\in \mathbfcal{S}$. Due to the definition of $h$ (cf.~\eqref{eq:5.10}) 
		we further deduce
		\begin{align*}
		h(\boldsymbol{x})\langle \mathbfcal{A}_0\boldsymbol{x},\boldsymbol{x}\rangle_\mathbfcal{X}&
		=h(\boldsymbol{x})\langle \mathbfcal{A}\boldsymbol{x},\boldsymbol{x}\rangle_\mathbfcal{X}-
		h(\boldsymbol{x})\langle \mathbfcal{B}\boldsymbol{x},\boldsymbol{x}\rangle_\mathbfcal{X}\\&
		\leq M'M+2K_0M
		\end{align*}
		for all $\boldsymbol{x}\in \mathbfcal{S}$.
		Proposition \ref{2.2} with $S=\mathbfcal{S}$ and $h$ as in \eqref{eq:5.10} provides a constant $K_1>0$ such 
		that $\|h(\boldsymbol{x})\mathbfcal{A}\boldsymbol{x}\|_{\mathbfcal{X}^*}\leq K_1$
		for all $\boldsymbol{x}\in  \mathbfcal{S}$. This and \eqref{eq:5.13} imply
		\begin{align}
		\left\|\frac{d_{V_n}\tau_n(\boldsymbol{x})\mathbfcal{F}_n\boldsymbol{x}}{dt}
		\right\|_{L^{p'}(I,V_n)}\leq \boldsymbol{\mu}_nK_1\label{eq:5.14}
		\end{align}
		for all $\boldsymbol{x}\in\mathbfcal{S}\cap C^0(\overline{I},V_n)$.
		Finally, we define
		\begin{align*}
		&\mathbfcal{B}_n:=\bigg\{\boldsymbol{x}\in W^{1,p'}(I,V_n)\cap  
		\mathbfcal{S}\;\bigg|\; \left\|\frac{d_{V_n}\boldsymbol{x}}{dt}\right\|_{L^{p'}(I,V_n)}
		\leq \boldsymbol{\mu}_nK_1\bigg\}\subseteq W^{1,p'}(I,V_n),
		\\&\mathbfcal{K}_n:=\overline{\mathbfcal{B}_n}^{\|\cdot\|_{C^0(\overline{I},V_n)}}
		\subseteq C^0(\overline{I},V_n).
		\end{align*}
		From \eqref{eq:5.12} and \eqref{eq:5.14} we derive the self map property 
		for $\tau_n\mathbfcal{F}_n$ on $\mathbfcal{K}_n$. In particular, 
		$\tau_n\mathbfcal{F}_n:\mathbfcal{K}_n\subseteq C^0(\overline{I},V_n)
		\rightarrow \mathbfcal{K}_n$ is well-defined and continuous. Thus, in view of Schauder's fixed point theorem it 
		remains to inquire into the properties of $\mathbfcal{K}_n$. To this end, 
		let us focus on $\mathbfcal{B}_n$, which is obviously non-empty, bounded 
		and convex in $W^{1,p'}(I,V_n)$. Proposition \ref{2.10} provides the compact 
		embedding
		\begin{align*}
		W^{1,p'}(I,V_n)\hookrightarrow\hookrightarrow C^0(\overline{I},V_n).
		\end{align*}
		In consequence, $\mathbfcal{K}_n$ is non-empty, convex and compact in 
		$C^0(\overline{I},V_n)$ and therefore Schauder's fixed point theorem applicable.
		According to Step \hyperlink{(2.3) (ii) (c)}{2.3 (ii) (c)} the existing fixed point is 
		also a fixed point of $\mathbfcal{F}_n$ and looking back to Step 
		\hyperlink{2.2}{2.2} and \hyperlink{2.1}{2.1} a solution of both \eqref{eq:5.3} 
		and \eqref{eq:5.2}.
	\end{itemize}
\end{description}	

\paragraph{\textbf{3. Passage to the limit:}}
\hypertarget{3}{}
\paragraph{\textbf{3.1 Convergence of Galerkin solutions:}}
\hypertarget{3.1}{}
Due to Step \hyperlink{2.3 (ii) (a)}{2.3 (ii) (a)} the verified solution 
$\boldsymbol{y}_n\in \mathbfcal{W}_{V_n}$ of \eqref{eq:5.2} satisfies the 
estimates \eqref{eq:5.7} and \eqref{eq:5.8}. In virtue of the reflexivity of 
$\mathbfcal{X}$ and $\mathbfcal{X}^*$ together with the existing separable 
pre-dual $L^1(I,H^*)$ of $L^\infty(I,H)\cong (L^1(I,H^*))^*$ (c.f.~\cite[Theorem 3.3]{BT38}) 
we obtain a not relabeled subsequence $(\boldsymbol{y}_n)_{n\in\mathbb{N}}\subseteq 
\mathbfcal{X}\cap_{\boldsymbol{j}}\mathbfcal{Y}$ as well as elements 
$\boldsymbol{y}\in \mathbfcal{X}\cap_{\boldsymbol{j}}L^\infty(I,H)$ and 
$\boldsymbol{\xi}\in \mathbfcal{X}^*$ such that
\begin{align}
\begin{alignedat}{2}
\boldsymbol{y}_n&\overset{n\rightarrow\infty}{\rightharpoonup}\boldsymbol{y}&\quad
&\text{ in }\mathbfcal{X},\\
\boldsymbol{j}\boldsymbol{y}_n&\;\;\overset{\ast}{\rightharpoondown}\;\;\boldsymbol{j}\boldsymbol{y}&&\text{
    in }L^\infty(I,H)\quad (n\rightarrow\infty),\\
\mathbfcal{A}\boldsymbol{y}_n&\overset{n\rightarrow\infty}{\rightharpoonup}\boldsymbol{\xi}&&\text{
    in }\mathbfcal{X}^*.
\end{alignedat}\label{eq:5.15}
\end{align}

\paragraph{\textbf{3.2 Regularity and trace of the weak limit:}}
\hypertarget{3.2}
Let $v\in V_k$, $k\in\mathbb{N}$, and $\varphi\in C^\infty(\overline{I})$ with $\varphi(T)=0$. 
Testing \eqref{eq:5.2} for all $n\ge k$ by $v\varphi\in\mathbfcal{X}_{V_k}\subseteq \mathbfcal{X}_{V_n}$ 
and a subsequent application of  the
generalized integration by parts formula \eqref{eq:4.3} with $U=V_n$ yield for all $n\ge k$
\begin{align}
\langle \mathbfcal{A}_{V_n}\boldsymbol{y}_n,v\varphi\rangle_{\mathbfcal{X}_{V_n}}
=-\left\langle \frac{d_{e_{V_n}}\boldsymbol{y}_n}{
	dt},v\varphi\right\rangle_{\mathbfcal{X}_{V_n}}
=\langle e_{V_n}(v)\varphi^\prime,\boldsymbol{y}_n\rangle_{\mathbfcal{X}_{V_n}}
+(ja_n,jv)_H\varphi(0).\label{eq:5.16}
\end{align}
Together with \eqref{eq:4.2} and \eqref{eq:4.4} in the case $U=V_n$, \eqref{eq:5.16} reads
\begin{align*}
\langle \mathbfcal{A}\boldsymbol{y}_n,v\varphi\rangle_\mathbfcal{X}
=\langle e(v)\varphi^\prime,\boldsymbol{y}_n\rangle_\mathbfcal{X}+(ja_n,jv)_H\varphi(0)
\end{align*}
for all $n\ge k$. By passing with $n\ge k$ to infinity, using \eqref{eq:5.15} 
and $ja_n\overset{n\rightarrow\infty}{\rightarrow}\boldsymbol{y}_0$ in $H$, 
we obtain
\begin{align}
\begin{split}
\langle \boldsymbol{\xi},v\varphi\rangle_\mathbfcal{X}
=\langle e(v)\varphi^\prime,\boldsymbol{y}\rangle_\mathbfcal{X}+(\boldsymbol{y}_0,jv)_H\varphi(0)
\end{split}\label{eq:5.17}
\end{align}
for all $v\in \bigcup_{k\in\mathbb{N}}{V_k}$ and $\varphi\in C^\infty(\overline{I})$ 
with $\varphi(T)=0$. In the case $\varphi\in C_0^\infty(I)$, \eqref{eq:5.17} reads
\begin{align*}
\langle e(v)\varphi^\prime,\boldsymbol{y}\rangle_\mathbfcal{X}
=\langle \boldsymbol{\xi},v\varphi\rangle_\mathbfcal{X}
\end{align*} 
for all $v\in \bigcup_{k\in\mathbb{N}}{V_k}$ and Proposition \ref{2.11} thus proves
\begin{align}
    \boldsymbol{y}\in \mathbfcal{W}\quad\text{ with }\quad\frac{d_e\boldsymbol{y}}{dt}
    =-\boldsymbol{\xi}\quad\text{ in }\mathbfcal{X}^*\qquad
    \text{ and }\qquad\boldsymbol{j}\boldsymbol{y}\in \mathbfcal{Y}.\label{eq:W}
\end{align}
In addition, we are allowed to apply the generalized integration by parts formula 
(cf.~Proposition \ref{2.12}) in \eqref{eq:5.17} in the case $\varphi\in C^\infty(\overline{I})$ 
with $\varphi(T)=0$ and $\varphi(0)=1$. In so doing, we further deduce that
\begin{align}
((\boldsymbol{j}\boldsymbol{y})(0)-\boldsymbol{y}_0,jv)_H=0\label{eq:5.18}
\end{align}
for all $v\in \bigcup_{k\in\mathbb{N}}{V_k}$. As $R(j)$ is dense in $H$ 
we obtain from \eqref{eq:5.18} that
\begin{align}
	(\boldsymbol{j}\boldsymbol{y})(0)=\boldsymbol{y}_0\quad\text{ in }H.\label{eq:5.19}
\end{align}

\paragraph{\textbf{3.3 Weak convergence in $\mathbfcal{X}\cap_{\boldsymbol{j}}\mathbfcal{Y}$:}}
The objective in the following passage is to exploit the characterization of 
weak convergence in $\mathbfcal{X}\cap_{\boldsymbol{j}}\mathbfcal{Y}$ 
(cf.~Proposition \ref{2.6}). To be more precise, it remains to verify pointwise 
weak convergence in $H$.

To this end, let us fix an arbitrary $t\in\left(0,T\right]$. From the a-priori estimate 
$\sup_{n\in\mathbb{N}}{\|\boldsymbol{j}\boldsymbol{y}_n\|_{\mathbfcal{Y}}}\leq M$ 
we obtain the existence of a subsequence 
$((\boldsymbol{j}\boldsymbol{y}_n)(t) )_{n\in\Lambda_t}\subseteq H$ 
with $\Lambda_t\subseteq\mathbb{N}$, initially depending on this 
fixed $t$, and an element $\boldsymbol{y}_{\Lambda_t}\in H$ such that
\begin{align}
(\boldsymbol{j}\boldsymbol{y}_n)(t) &\overset{n\rightarrow\infty}{\rightharpoonup}
\boldsymbol{y}_{\Lambda_t}\quad\text{ in }H\quad(n\in \Lambda_t).\label{eq:5.20}
\end{align}
For $v\in V_k$, $k\in\Lambda_t$, and
$\varphi\in C^\infty(\overline{I})$ with $\varphi(0)=0$ and
$\varphi(t)=1$, we test \eqref{eq:5.2} for $n\ge k$
($n\in \Lambda_t$) by
$v\varphi\chi_{\left[0,t\right]}\in \mathbfcal{X}_k\subseteq
\mathbfcal{X}_n$, use the
generalized integration by parts formula \eqref{eq:4.3}, \eqref{eq:4.2} and \eqref{eq:4.4} with $U=V_n$ to obtain for all
$n\ge k$ with $n\in \Lambda_t$ 
\begin{align*}
\langle \mathbfcal{A}\boldsymbol{y}_n,v\varphi\chi_{\left[0,t\right]}\rangle_\mathbfcal{X}
=\langle e(v)\varphi^\prime\chi_{\left[0,t\right]},\boldsymbol{y}_n\rangle_\mathbfcal{X}
-((\boldsymbol{j}\boldsymbol{y}_n)(t),jv)_H.
\end{align*}
By passing for $n\ge k$ with $n\in \Lambda_t$ to infinity, 
using \eqref{eq:5.15} and \eqref{eq:5.20}, we obtain
\begin{align*}
\langle \boldsymbol{\xi},v\varphi\chi_{\left[0,t\right]}\rangle_\mathbfcal{X}
=\langle e(v)\varphi^\prime\chi_{\left[0,t\right]},\boldsymbol{y}\rangle_\mathbfcal{X}
-(\boldsymbol{y}_{\Lambda_t},jv)_H
\end{align*}
for all $v\in \bigcup_{k\in\Lambda_t}{V_k}$. The generalized 
integration by parts formula (cf.~Proposition \ref{2.12}) and \eqref{eq:W} provide
\begin{align}
((\boldsymbol{j}\boldsymbol{y})(t)-\boldsymbol{y}_{\Lambda_t},jv)_H=0\label{eq:5.21}
\end{align}
for all $v\in\bigcup_{k\in\Lambda_t}{V_k}$. Thanks to $V_k\subseteq V_{k+1}$ 
for all $k\in\mathbb{N}$ there holds $\bigcup_{k\in\Lambda_t}{V_k}
=\bigcup_{k\in\mathbb{N}}{V_k}$. Thus, $j(\bigcup_{k\in\Lambda_t}{V_k})$ 
is dense in $H$ and we obtain from \eqref{eq:5.21} that 
$(\boldsymbol{j}\boldsymbol{y})(t)=\boldsymbol{y}_{\Lambda_t}$ in $H$, i.e.,
\begin{align}
(\boldsymbol{j}\boldsymbol{y}_n)(t)\overset{n\rightarrow\infty}{\rightharpoonup}
(\boldsymbol{j}\boldsymbol{y})(t)\quad\text{ in }H\quad(n\in\Lambda_t).\label{eq:5.22}
\end{align}
As this argumentation stays valid for each subsequence of 
$((\boldsymbol{j}\boldsymbol{y}_n)(t) )_{n\in\mathbb{N}}\subseteq H$, 
$(\boldsymbol{j}\boldsymbol{y})(t)\in H$ is weak accumulation point of each 
subsequence of $((\boldsymbol{j}\boldsymbol{y}_n)(t))_{n\in\mathbb{N}}\subseteq H$. 
The standard convergence principle (cf.~\cite[Kap. I, Lemma 5.4]{GGZ74}) finally yields 
$\Lambda_t=\mathbb{N}$ in \eqref{eq:5.22}. Since $t\in \left(0,T\right]$ 
was arbitrary in \eqref{eq:5.22} and 
$(\boldsymbol{y}_n)_{n\in\mathbb{N}}\subseteq \mathbfcal{X}\cap_{\boldsymbol{j}}\mathbfcal{Y}$ 
is bounded (cf.~\eqref{eq:5.7}) we conclude using Proposition \ref{2.6} that
\begin{align}
\boldsymbol{y}_n\overset{n\rightarrow\infty}{\rightharpoonup}
\boldsymbol{y}\quad\text{ in }\mathbfcal{X}\cap_{\boldsymbol{j}}\mathbfcal{Y}.\label{eq:5.23}
\end{align}

\paragraph{\textbf{3.4 Identification of \textnormal{$\mathbfcal{A}\boldsymbol{y}$ }and $\boldsymbol{\xi}$:}}
\hypertarget{3.4}{}
As $\tau_n(\boldsymbol{y}_n)=1$ 
estimate \eqref{eq:5.6} with $t=T$ reads
\begin{align}
\langle \mathbfcal{A}\boldsymbol{y}_n,\boldsymbol{y}_n\rangle_\mathbfcal{X}
\leq -\frac{1}{2}\|(\boldsymbol{j}\boldsymbol{y}_n)(T)\|_H^2+\frac{1}{2}\|\boldsymbol{y}_0\|_H^2\label{eq:5.24}
\end{align}
for all $n\in \mathbb{N}$. The limit superior with respect to $n\in\mathbb{N}$ 
on both sides in \eqref{eq:5.24}, \eqref{eq:5.19}, \eqref{eq:5.22} with 
$\Lambda_t=\mathbb{N}$ in the case $t=T$, the weak 
lower semi-continuity of $\|\cdot\|_H$, the generalized integration by parts 
formula (cf.~Proposition \ref{2.12}) and \eqref{eq:W} yield
\begin{alignat}{2}\begin{split}
\limsup_{n\rightarrow\infty}{
	\langle \mathbfcal{A}\boldsymbol{y}_n,\boldsymbol{y}_n\rangle_\mathbfcal{X}}
&\leq -\frac{1}{2}\|(\boldsymbol{j}\boldsymbol{y})(T)\|_H^2
+\frac{1}{2}\|(\boldsymbol{j}\boldsymbol{y})(0)\|_H^2\\&
=-\left\langle \frac{d_e\boldsymbol{y}}{dt},\boldsymbol{y}\right\rangle_\mathbfcal{X}
=\langle \boldsymbol{\xi},\boldsymbol{y}\rangle_\mathbfcal{X}.
\end{split}\label{eq:5.25}
\end{alignat}
In view of $\eqref{eq:5.15}_3$, \eqref{eq:5.23} and \eqref{eq:5.25}
we conclude from the $C^0$-Bochner condition (M) of 
$\mathbfcal{A}:\mathbfcal{X}\cap_{\boldsymbol{j}}\mathbfcal{Y}\rightarrow \mathbfcal{X}^*$ 
that $\mathbfcal{A}\boldsymbol{y}=\boldsymbol{\xi}\text{ in }\mathbfcal{X}^*$. 
All things considered, we proved
\begin{align*}
\begin{alignedat}{2}
\frac{d_e\boldsymbol{y}}{dt}+\mathbfcal{A}\boldsymbol{y}&=\boldsymbol{0}&&\quad\text{ in }\mathbfcal{X}^*,\\
(\boldsymbol{j}\boldsymbol{y})(0)&=\boldsymbol{y}_0&&\quad\text{ in }H.
\end{alignedat}
\end{align*} 
This completes the proof of Theorem \ref{5.1}.\hfill$\square$

\section{Main theorem: (Purely reflexive case)}
\label{sec:6}
This section is concerned with the extension of 
Theorem \ref{5.1} to the case of purely reflexive $V$. 

\begin{thm}\label{6.1}
  Theorem \ref{5.1} stays valid if we omit the separability of $V$.
\end{thm}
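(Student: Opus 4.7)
The plan is to reduce Theorem~\ref{6.1} to the separable case Theorem~\ref{5.1} via an exhaustion of $V$ by its directed family $\mathcal{F}$ of closed separable subspaces, in the spirit of \cite{Bro68,BH72}. Fix a sequence $(v_k)_{k\in\mathbb{N}}\subseteq V$ with $jv_k\to\boldsymbol{y}_0$ in $H$ and $\|jv_k\|_H\leq\|\boldsymbol{y}_0\|_H$, and take $\mathcal{F}$ to consist of the closed separable subspaces $W\subseteq V$ containing $\{v_k\}_{k\in\mathbb{N}}$, directed by inclusion. For each $W\in\mathcal{F}$, the triple $(W,H_W,j_W)$ with $H_W:=\overline{j(W)}^{\|\cdot\|_H}$ is a separable evolution triple and $\boldsymbol{y}_0\in H_W$. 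Lemma~\ref{4.5} guarantees that the restricted operators $\mathbfcal{A}_{0,W}$, $\mathbfcal{B}_W$, $\mathbfcal{A}_W:=(\mathrm{id}_{\mathbfcal{X}_W})^*\mathbfcal{A}$ together with $\boldsymbol{f}_W:=(\mathrm{id}_{\mathbfcal{X}_W})^*\boldsymbol{f}$ and $\boldsymbol{y}_0^W:=\boldsymbol{y}_0$ satisfy the hypotheses of Theorem~\ref{5.1}; hence there is a solution $\boldsymbol{y}_W\in\mathbfcal{W}_W$ on $W$. The energy estimate of Step 2.3(ii)(a) of the proof of Theorem~\ref{5.1} is $W$-independent and delivers uniform bounds $\|\boldsymbol{y}_W\|_{\mathbfcal{X}\cap_{\boldsymbol{j}}\mathbfcal{Y}}\leq M$ and $\|\mathbfcal{A}\boldsymbol{y}_W\|_{\mathbfcal{X}^*}\leq M'$.

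\textbf{Net limit and time-derivative equation.} By the Bourbaki-Alaoglu theorem applied in the reflexive spaces $\mathbfcal{X}$ and $\mathbfcal{X}^*$, I would extract a subnet with $\boldsymbol{y}_{W_\alpha}\rightharpoonup\boldsymbol{y}$ in $\mathbfcal{X}$ and $\mathbfcal{A}\boldsymbol{y}_{W_\alpha}\rightharpoonup\boldsymbol{\xi}$ in $\mathbfcal{X}^*$. The decisive feature of working with the full directed family is cofinality: for any $v\in V$ and $\varphi\in C_0^\infty(I)$ fixed, eventually $v\in W_\alpha$, so the Galerkin identity for $\boldsymbol{y}_{W_\alpha}$ tested against $v\varphi$ passes to the limit as
\[
-\int_I (j\boldsymbol{y}(s),jv)_H\,\varphi'(s)\,ds=\int_I\langle -\boldsymbol{\xi}(s)+\boldsymbol{f}(s),v\rangle_V\,\varphi(s)\,ds
\]
for \emph{every} $v\in V$. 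Proposition~\ref{2.11} then gives $\boldsymbol{y}\in\mathbfcal{W}$ with $\frac{d_e\boldsymbol{y}}{dt}+\boldsymbol{\xi}=\boldsymbol{f}$ in $\mathbfcal{X}^*$, and the analogous calculation with $\varphi(0)=1$, $\varphi(T)=0$ together with the density of $j(V)$ in $H$ delivers $(\boldsymbol{j}\boldsymbol{y})(0)=\boldsymbol{y}_0$.

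\textbf{Main obstacle and resolution.} The hard part is upgrading the foregoing to the full evolution equation by identifying $\boldsymbol{\xi}$ with $\mathbfcal{A}\boldsymbol{y}$ in $\mathbfcal{X}^*$, since the only available tool is the $C^0$-Bochner condition (M), which is inherently sequential, whereas the preceding limit was taken along an uncountable subnet of $\mathcal{F}$. The plan is to apply the Eberlein-Šmulian theorem jointly in the reflexive product $\mathbfcal{X}\times\mathbfcal{X}^*$ to realise $(\boldsymbol{y},\boldsymbol{\xi})$ as a sequential weak limit of some $(\boldsymbol{y}_{W_n},\mathbfcal{A}\boldsymbol{y}_{W_n})_{n\in\mathbb{N}}$, and then to pass to the ascending chain $\widetilde W_n:=\overline{W_1+\dots+W_n}\in\mathcal{F}$ via a diagonal extraction combined with a suitable choice of Galerkin solution on each $\widetilde W_n$ (permitted by the non-uniqueness in Theorem~\ref{5.1}) preserving the weak limits $\boldsymbol{y}$ and $\boldsymbol{\xi}$. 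Inside the separable closed evolution triple $(V_\infty,H_{V_\infty},j|_{V_\infty})$ with $V_\infty:=\overline{\bigcup_n\widetilde W_n}$, the arguments of Steps 3.3--3.4 of the proof of Theorem~\ref{5.1} now apply along $(\widetilde W_n)_{n\in\mathbb{N}}$: the key observation is that $L^p(I,V_\infty)$ is weakly closed in $\mathbfcal{X}$ and contains $\boldsymbol{y}$, so by continuity $(\boldsymbol{j}\boldsymbol{y})(t)\in H_{V_\infty}$ for every $t\in\overline{I}$, confining all pointwise weak accumulation points of $((\boldsymbol{j}\boldsymbol{y}_{\widetilde W_n})(t))_{n\in\mathbb{N}}$ to $H_{V_\infty}$ on which $\bigcup_n j(\widetilde W_n)$ is dense. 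This yields $\boldsymbol{y}_{\widetilde W_n}\rightharpoonup\boldsymbol{y}$ in $\mathbfcal{X}\cap_{\boldsymbol{j}}\mathbfcal{Y}$ by Proposition~\ref{2.6} together with the energy inequality of Step 3.4, and condition (M) delivers $\mathbfcal{A}\boldsymbol{y}=\boldsymbol{\xi}$ in $\mathbfcal{X}^*$, completing the proof.
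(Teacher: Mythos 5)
Your overall strategy is the right one and matches the paper: exhaust $V$ by the directed family of separable subspaces containing $\boldsymbol{y}_0$ (up to $H$-closure), solve the Galerkin problem on each via Theorem~\ref{5.1}, extract an increasing chain along which the solutions converge weakly, and then rerun the separable argument inside the separable hull. The cofinality argument in your ``Net limit'' paragraph and the final confinement of pointwise accumulation points to $H_{V_\infty}$ are also correct in spirit.

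The genuine gap is in the ``Main obstacle and resolution'' paragraph, precisely at the sentence asserting that one can pass ``to the ascending chain $\widetilde W_n:=\overline{W_1+\dots+W_n}$ via a diagonal extraction combined with a suitable choice of Galerkin solution on each $\widetilde W_n$ \dots preserving the weak limits $\boldsymbol{y}$ and $\boldsymbol{\xi}$.'' This is exactly the nontrivial statement, not a corollary of ``non-uniqueness.'' Applying Eberlein--\v Smulian (or Whitley's lemma) to $\boldsymbol{y}\in\overline{\textbf{L}_U}^{\tau(\mathbfcal{X},\mathbfcal{X}^*)}$ does yield a sequence $\boldsymbol{y}_{W_n}\rightharpoonup\boldsymbol{y}$ with $W_n\supseteq U$, but the $W_n$ need not be nested, and a Galerkin solution on $W_n$ is in general \emph{not} a Galerkin solution on the larger space $\widetilde W_n$; there is no a priori reason why Galerkin solutions on $\widetilde W_n$ should have $\boldsymbol{y}$ as a weak cluster point at all. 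The paper handles this with Lemma~\ref{6.2}(ii) (a modification of \cite[Proposition 11]{BH72}), whose proof is a careful \emph{recursive} construction: having fixed $U_{n-1}$, one uses $\boldsymbol{y}\in\overline{\textbf{L}_{U_{n-1}}}^{\tau(\mathbfcal{X},\mathbfcal{X}^*)}$ to pick $\boldsymbol{y}_n\in\boldsymbol{\Psi}(U_n)$ with $U_n\supseteq U_{n-1}$ close to $\boldsymbol{y}$ against finitely many functionals that are densified as $n$ grows, and then uses that the limiting $U_\infty$ is separable (so that bounded sets in $\mathbfcal{X}_{U_\infty}$ are weakly metrizable) to conclude $\boldsymbol{y}_n\rightharpoonup\boldsymbol{y}$. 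Without supplying this construction or an equivalent, your proposal asserts the conclusion of the key lemma rather than proving it.

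A secondary, non-fatal difference: you first pass to the limit in the time-derivative equation \emph{globally} using cofinality and a globally defined $\boldsymbol{\xi}\in\mathbfcal{X}^*$, and only then seek $\mathbfcal{A}\boldsymbol{y}=\boldsymbol{\xi}$. The paper instead works entirely inside $U_\infty$: it applies Steps 3.1--3.4 of Theorem~\ref{5.1} there to obtain $\boldsymbol{y}\in\mathbfcal{W}_{U_\infty}$ solving the restricted equation, and then tests with $u\varphi$ for $u\in U$; since $U\in\mathcal{U}_{\boldsymbol{y}_0}$ is arbitrary, Proposition~\ref{2.11} lifts this to all of $V$. This avoids having to keep a coherent global weak limit $\boldsymbol{\xi}$ for $\mathbfcal{A}$ across different chains, and is therefore slightly cleaner once Lemma~\ref{6.2} is in place.
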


A lack of separability of $V$ results in a non-existence of an increasing sequence 
of finite dimensional subspaces which approximates $V$ up to density. 
We circumvent this problem by regarding a probably uncountable system 
of separable subspaces. But this system might be orderless, such that the 
increasing structure, which was indispensable for the proof of Theorem \ref{5.1}, 
has to be generated locally. The latter will be guaranteed by the subsequent lemma. 
Then, we perform the passage to limit as in Theorem \ref{5.1} Step \hyperlink{3}{3} 
first locally and assemble the extracted local information to the desired global assertion afterwards. 

\begin{lem}\label{6.2}
	Let $(V,H,j)$ be an evolution triple, $1<p<\infty$, $M>0$, $\boldsymbol{y}_0\in H$ and
	\begin{align*}
	\mathcal{U}_{\boldsymbol{y}_0}:=\left\{U\subseteq V\mid (U,\|\cdot\|_V)
	\text{ is a separable Banach space},\;\boldsymbol{y}_0\in H_U:=\overline{j(U)}^{\|\cdot\|_V}\right\}.
	\end{align*}
	Then it holds:
	\begin{description}[(ii)]
		\item[(i)] $(U,\boldsymbol{y}_0)_{U\in \mathcal{U}_{\boldsymbol{y}_0}}$ is 
		a Galerkin basis of $(V,\boldsymbol{y}_0)$ in the sense of Remark \ref{4.1} (i).
		\item[(ii)] Suppose for a mapping 
		$\boldsymbol{\Psi}:\mathcal{U}_{\boldsymbol{y}_0}\rightarrow 2^{B^{\mathbfcal{X}}_M(0)}\setminus \{\emptyset\}$ 
		with
		\begin{align*}
		\boldsymbol{\Psi}(U)\subseteq \mathbfcal{X}_U:=L^p(I,U)\qquad
		\text{ and }\qquad\textbf{L}_U:=\bigcup_{\substack{Z\in\mathcal{U}_{\boldsymbol{y}_0}
				\\ Z\supseteq U}}{\boldsymbol{\Psi}(Z)}\subseteq 2^{B^{\mathbfcal{X}}_M(0)}
			\qquad \text{ for all }U\in\mathcal{U}_{\boldsymbol{y}_0}
		\end{align*}
		there exists $\boldsymbol{y}\in \bigcap_{U\in\mathcal{U}_{\boldsymbol{y}_0}}
		{\overline{\textbf{L}_U}^{\tau(\mathbfcal{X},\mathbfcal{X}^*)}}$. Then for all 
		$U\in \mathcal{U}_{\boldsymbol{y}_0}$ there exist sequences $(U_n)_{n\in\mathbb{N}}
		\subseteq\mathcal{U}_{\boldsymbol{y}_0}$ and $(\boldsymbol{y}_n)_{n\in\mathbb{N}}
		\subseteq \mathbfcal{X}$, with $U\subseteq U_n\subseteq U_{n+1}$ and 
		$\boldsymbol{y}_n\in \boldsymbol{\Psi}(U_n)$ for all $n\in\mathbb{N}$, such 
		that
		\begin{align*}
			\boldsymbol{y}_n\overset{n\rightarrow\infty}{\rightharpoonup}\boldsymbol{y}\quad\text{ in }\mathbfcal{X}.
		\end{align*}
	\end{description}
\end{lem}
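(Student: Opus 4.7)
For (i), I verify the three axioms of Remark~\ref{4.1}(i) for the family $(U,\boldsymbol{y}_0^U)_{U\in\mathcal{U}_{\boldsymbol{y}_0}}$ with the choice $\boldsymbol{y}_0^U:=\boldsymbol{y}_0$, which is admissible because $\boldsymbol{y}_0\in H_U$ by the very definition of $\mathcal{U}_{\boldsymbol{y}_0}$; the norm bound and the net-convergence $\boldsymbol{y}_0^U\to\boldsymbol{y}_0$ in $H$ are then trivial. Density of $\bigcup_{U\in\mathcal{U}_{\boldsymbol{y}_0}}U$ in $V$ follows because, for any $v\in V$, picking $(v_k)_{k\in\mathbb{N}}\subseteq V$ with $jv_k\to\boldsymbol{y}_0$ in $H$ (using density of $R(j)$ in $H$) yields the separable subspace $U:=\overline{\mathrm{span}(\{v\}\cup\{v_k:k\in\mathbb{N}\})}^{\|\cdot\|_V}\in\mathcal{U}_{\boldsymbol{y}_0}$ containing $v$.

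For (ii), the plan is an iterative Eberlein-Smulian construction coupled with a diagonal selection. As a preparation, I note that $\boldsymbol{y}\in B_M^{\mathbfcal{X}}(0)\subseteq\mathbfcal{X}$ (the ball is weakly closed by reflexivity of $\mathbfcal{X}$) is Bochner measurable, hence has essentially separable range in $V$. Enlarging $U$ by a countable set whose closed linear span covers this range, and by further vectors as in (i) to ensure $\boldsymbol{y}_0\in H_W$, produces a separable $W\in\mathcal{U}_{\boldsymbol{y}_0}$ with $W\supseteq U$ and $\boldsymbol{y}\in\mathbfcal{X}_W$. Setting $U_1:=W$ and proceeding inductively for $n\geq 2$: once $U_{n-1}$ is given, the hypothesis $\boldsymbol{y}\in\overline{\mathbf{L}_{U_{n-1}}}^{\tau(\mathbfcal{X},\mathbfcal{X}^*)}$ together with the boundedness of $\mathbf{L}_{U_{n-1}}$ in the reflexive $\mathbfcal{X}$ and the Eberlein-Smulian theorem furnishes a sequence $(\boldsymbol{z}_k)_{k\in\mathbb{N}}\subseteq\mathbf{L}_{U_{n-1}}$ with $\boldsymbol{z}_k\rightharpoonup\boldsymbol{y}$ in $\mathbfcal{X}$, each $\boldsymbol{z}_k\in\boldsymbol{\Psi}(Z_k)$ for some $Z_k\supseteq U_{n-1}$.

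For each $1\leq m\leq n-1$, a countable dense subset $\{\mu_j^{(m)}\}_{j\in\mathbb{N}}\subseteq\mathbfcal{X}_{U_m}^*$ was fixed at step $m$ together with norm-preserving Hahn-Banach extensions $\{\hat{\mu}_j^{(m)}\}_{j\in\mathbb{N}}\subseteq\mathbfcal{X}^*$. Exploiting $\boldsymbol{z}_k\rightharpoonup\boldsymbol{y}$, I pick $k(n)$ such that $|\langle\hat{\mu}_j^{(m)},\boldsymbol{z}_{k(n)}-\boldsymbol{y}\rangle_{\mathbfcal{X}}|<1/n$ for all $1\leq m\leq n-1$ and $1\leq j\leq n$; then set $\boldsymbol{y}_n:=\boldsymbol{z}_{k(n)}$, $U_n:=Z_{k(n)}$, and fix $\{\mu_j^{(n)}\}_{j\in\mathbb{N}}\subseteq\mathbfcal{X}_{U_n}^*$ for future use. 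By construction $U\subseteq W=U_1\subseteq U_n\subseteq U_{n+1}$ and $\boldsymbol{y}_n\in\boldsymbol{\Psi}(U_n)$.

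The main obstacle lies in verifying $\boldsymbol{y}_n\rightharpoonup\boldsymbol{y}$ in $\mathbfcal{X}$. Since $(\boldsymbol{y}_n)_{n\in\mathbb{N}}$ is bounded by $M$ in the reflexive $\mathbfcal{X}$, the standard convergence principle (cf.~\cite[Kap.~I, Lemma~5.4]{GGZ74}) reduces the task to identifying every weak subsequential limit with $\boldsymbol{y}$. If $\boldsymbol{y}_{n_k}\rightharpoonup\boldsymbol{y}^*$ in $\mathbfcal{X}$, then $\boldsymbol{y}^*$ lies in the weakly closed subspace $\mathbfcal{X}_{W_\infty}$ with $W_\infty:=\overline{\bigcup_n U_n}^{\|\cdot\|_V}$, and passing to the limit in the built-in conditions yields $\langle\hat{\mu}_j^{(m)},\boldsymbol{y}^*-\boldsymbol{y}\rangle_{\mathbfcal{X}}=0$ for every $m,j\in\mathbb{N}$. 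Were $\boldsymbol{w}:=\boldsymbol{y}^*-\boldsymbol{y}\in\mathbfcal{X}_{W_\infty}$ nonzero, density of $\bigcup_m\mathbfcal{X}_{U_m}$ in $\mathbfcal{X}_{W_\infty}$ would furnish $m$ and $\boldsymbol{w}'\in\mathbfcal{X}_{U_m}$ with $\|\boldsymbol{w}-\boldsymbol{w}'\|_{\mathbfcal{X}}$ arbitrarily small (hence $\boldsymbol{w}'\neq0$); a unit-norm Hahn-Banach functional on $\mathbfcal{X}_{U_m}$ norming $\boldsymbol{w}'$, approximated by some $\mu_j^{(m)}$, combined with the norm preservation $\|\hat{\mu}_j^{(m)}\|_{\mathbfcal{X}^*}=\|\mu_j^{(m)}\|_{\mathbfcal{X}_{U_m}^*}$ and the compatibility $\langle\hat{\mu}_j^{(m)},\boldsymbol{w}'\rangle_{\mathbfcal{X}}=\langle\mu_j^{(m)},\boldsymbol{w}'\rangle_{\mathbfcal{X}_{U_m}}$, would force $\langle\hat{\mu}_j^{(m)},\boldsymbol{w}\rangle_{\mathbfcal{X}}\neq0$, contradicting the limit identity; therefore $\boldsymbol{w}=0$ and the proof closes.
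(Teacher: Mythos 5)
The paper does not give a self-contained proof of this lemma: point (i) is said to follow from Remark \ref{4.1}, and point (ii) is deferred to \cite[Proposition 11]{BH72} and \cite[Lemma 7.1]{alex-master}. Your proposal is therefore a genuine reconstruction of the omitted separable reduction argument, and, as far as I can tell, a correct one.

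For (i) your argument is fine: the constant net $\boldsymbol{y}_0^U:=\boldsymbol{y}_0$ is admissible since $\boldsymbol{y}_0\in H_U$ by definition of $\mathcal{U}_{\boldsymbol{y}_0}$, and your construction in fact shows $\bigcup_{U\in\mathcal{U}_{\boldsymbol{y}_0}}U=V$, which is stronger than density.

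For (ii) the overall architecture is sound and follows the Browder--Hess pattern: enlarge $U$ to a separable $W\in\mathcal{U}_{\boldsymbol{y}_0}$ containing the essential range of $\boldsymbol{y}$ (so that $\boldsymbol{y}$, and hence $\boldsymbol{w}=\boldsymbol{y}^*-\boldsymbol{y}$, lives in $\mathbfcal{X}_{W_\infty}$); at each stage extract from $\mathbf{L}_{U_{n-1}}$ a weakly convergent sequence to $\boldsymbol{y}$ and select a member enforcing a growing finite block of the conditions $\vert\langle\hat{\mu}_j^{(m)},\boldsymbol{y}_n-\boldsymbol{y}\rangle_{\mathbfcal{X}}\vert<1/n$; and finally identify every weak subsequential limit with $\boldsymbol{y}$ via separability of $\mathbfcal{X}_{U_m}^*$ and norm-preserving Hahn--Banach extensions. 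The density of $\bigcup_m\mathbfcal{X}_{U_m}$ in $\mathbfcal{X}_{W_\infty}$, the weak closedness of $\mathbfcal{X}_{W_\infty}$ in $\mathbfcal{X}$, and the final $\varepsilon$-argument with the norming functional all check out. Two minor remarks. First, what you invoke as ``Eberlein--Smulian'' is more precisely the angelic property of weakly compact subsets of Banach spaces (Whitley's sharpening of Eberlein--Smulian): the bare equivalence of weak compactness and weak sequential compactness does not by itself deliver a sequence from $\mathbf{L}_{U_{n-1}}$ weakly converging to a point of the weak closure. The stronger statement is standard, but since it is the crux of the step it deserves to be named. Second, $\boldsymbol{y}_1$ is never defined; simply choose any $\boldsymbol{y}_1\in\boldsymbol{\Psi}(U_1)\neq\emptyset$ so that the sequence indexed by $\mathbb{N}$ is complete. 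Neither issue affects the correctness of the argument.
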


\begin{proof}
	Point (i) follows right from the definition in Remark \ref{4.1}. The verification 
	of (ii) is a straightforward modification of \cite[Proposition 11]{BH72}. For a detailed proof we refer to \cite[Lemma 7.1]{alex-master}.\hfill$\square$
\end{proof}

\paragraph{\textbf{Proof} (of Theorem \ref{6.1})} It suffices anew to treat 
the special case $\boldsymbol{f}=\boldsymbol{0}$ in $\mathbfcal{X}^*$.

\paragraph{\textbf{1. Galerkin approximation:}} Let $\mathcal{U}_{\boldsymbol{y}_0}$ 
be as in Lemma \ref{6.2}. In line with Remark \ref{4.1} we see the well-posedness of the 
Galerkin system with respect to $(U,\boldsymbol{y}_0)_{U\in\;\mathcal{U}_{\boldsymbol{y}_0}}$. 
We denote by $\boldsymbol{y}_U\in\mathbfcal{W}_U$ the \textbf{Galerkin solution with respect to }$U$ if
\begin{align}
\begin{split}
\begin{alignedat}{2}
\frac{d_{e_U}\boldsymbol{y}_U}{dt}+\mathbfcal{A}_U\boldsymbol{y}_U&
=\boldsymbol{0}&&\quad\text{ in }\mathbfcal{X}_U^*,\\
(\boldsymbol{j}_U\boldsymbol{y}_U)(0)&=\boldsymbol{y}_0&&\quad\text{ in }H_U.
\end{alignedat}
\end{split}\label{eq:6.3}
\end{align}

\paragraph{\textbf{2. Existence of Galerkin solutions:}}
Each $U\in\;\mathcal{U}_{\boldsymbol{y}_0}$ is separable and reflexive. 
Thus, it remains to inquire into the properties of the restricted operator 
$\mathbfcal{A}_U:=(\text{id}_{\mathbfcal{X}_U})^*\mathbfcal{A}:\mathbfcal{X}_U
\cap_{\boldsymbol{j}_U}\mathbfcal{Y}_U\rightarrow\mathbfcal{X}_U^*$. 
Lemma \ref{4.5} immediately provides:
\begin{description}
	\item[(i)] $(\mathbfcal{A}_0)_U:=(\text{id}_{\mathbfcal{X}_U})^*\mathbfcal{A}_0:
	\mathbfcal{X}_U\rightarrow\mathbfcal{X}_U^*$ is monotone.
	\item[(ii)] $\mathbfcal{B}_U:=(\text{id}_{\mathbfcal{X}_U})^*\mathbfcal{B}:
	\mathbfcal{X}_U\cap_{\boldsymbol{j}_U}\mathbfcal{Y}_U\rightarrow\mathbfcal{X}_U^*$ is bounded.
	\item[(iii)] $\mathbfcal{A}_U:=(\mathbfcal{A}_0)_U+\mathbfcal{B}_U:
	\mathbfcal{X}_U\cap_{\boldsymbol{j}_U}\mathbfcal{Y}_U\rightarrow\mathbfcal{X}_U^*$ 
	satisfies the $C^0$-Bochner condition (M) and is $C^0$-Bochner coercive 
	with respect to $\boldsymbol{0}\in \mathbfcal{X}_U^*$ and $\boldsymbol{y}_0\in H_U$.
\end{description}
All things considered, Theorem \ref{5.1} yields the solvability of \eqref{eq:6.3} 
for all $U\in\;\mathcal{U}_{\boldsymbol{y}_0}$. In addition, we obtain as in Theorem 
\ref{5.1} Step \hyperlink{2.3 (ii) (a)}{2.3 (ii) (a)} $U$-independent constants $M,M'>0$ 
such that
\begin{align}
\|\boldsymbol{y}_U\|_{\mathbfcal{X}\cap_{\boldsymbol{j}}\mathbfcal{Y}}\leq M
\qquad\text{ and }\qquad\|\mathbfcal{A}\boldsymbol{y}_U\|_{\mathbfcal{X}^*}\leq M'.\label{eq:6.4}
\end{align}
Therefore, the mapping $\boldsymbol{\Psi}:\mathcal{U}_{\boldsymbol{y}_0}
\rightarrow 2^{B^{\mathbfcal{X}}_M(0)}\setminus\{\emptyset\}$, given via
\begin{align}
\boldsymbol{\Psi}(U):=\{\boldsymbol{y}_U\in \mathbfcal{W}_U\mid 
\boldsymbol{y}_U\text{ solves \eqref{eq:6.3} with respect to }U\}
\subseteq\mathbfcal{X}_U,\qquad U\in\mathcal{U}_{\boldsymbol{y}_0},\label{eq:6.5}
\end{align}
is well-defined. Apart from this, we define $\textbf{L}_U:=
\bigcup_{\substack{Z\in\mathcal{U}_{\boldsymbol{y}_0}\\ Z\supseteq U}}
{\boldsymbol{\Psi}(Z)}\neq\emptyset$.

\paragraph{\textbf{3. Passage to the limit:}}
Our next objective is to show
\begin{align}
	\bigcap_{U\in\mathcal{U}_{\boldsymbol{y}_0}}
	{\overline{\textbf{L}_U}^{\tau(\mathbfcal{X},\mathbfcal{X}^*)}}\neq\emptyset.\label{eq:6.6}
\end{align}
Then Lemma \ref{6.2} is applicable and we are in the position to perform the passage 
to the limit as in Theorem \ref{5.1} Step \hyperlink{3}{3} locally for each 
$U\in\mathcal{U}_{\boldsymbol{y}_0}$.

By construction holds $\textbf{L}_Z\subseteq \textbf{L}_W$ for all $Z,W\in\mathcal{U}_{\boldsymbol{y}_0}$ 
with $W\subseteq Z$. As $\overline{\langle Z\cup W\rangle}^{\|\cdot\|_V}\in \;\mathcal{U}_{\boldsymbol{y}_0}$ 
for all $Z,W\in\mathcal{U}_{\boldsymbol{y}_0}$, we thus have
\begin{align}
\emptyset\neq \textbf{L}_{\overline{\langle Z\cup W\rangle}^{\|\cdot\|_V}}\subseteq \textbf{L}_Z\cap \textbf{L}_W\label{eq:6.7}
\end{align}
for all $Z,W\in\mathcal{U}_{\boldsymbol{y}_0}$. By induction we obtain from \eqref{eq:6.7} 
that $(\overline{\textbf{L}_U}^{\tau(\mathbfcal{X},\mathbfcal{X}^*)})_{U\in\;\mathcal{U}_{\boldsymbol{y}_0}}$ 
satisfies the finite intersection property. As $B_M^{\mathbfcal{X}}(0)$ is compact with respect to 
$\tau(\mathbfcal{X},\mathbfcal{X}^*)$ and $\textbf{L}_U\subseteq B_M^{\mathbfcal{X}}(0)$ 
(cf.~\eqref{eq:6.4}), we conclude \eqref{eq:6.6} from the finite intersection principle 
(cf.~\cite[Appendix, Lemma 1.3]{Ru04}).\\

Now we perform the passage to the limit locally for each $U\in\mathcal{U}_{\boldsymbol{y}_0}$. 
To this end, we fix an arbitrary $U\in\mathcal{U}_{\boldsymbol{y}_0}$. Then Lemma \ref{6.2} 
provides sequences $(U_n)_{n\in\mathbb{N}}\subseteq \mathcal{U}_{\boldsymbol{y}_0}$ and 
$(\boldsymbol{y}_n)_{n\in\mathbb{N}}\subseteq\mathbfcal{X}\cap_{\boldsymbol{j}}\mathbfcal{Y}$, 
with $U\subseteq U_n\subseteq U_{n+1}$ and $\boldsymbol{y}_n\in \boldsymbol{\Psi}(U_n)\subseteq \mathbfcal{X}_{U_n}$ 
for all $n\in\mathbb{N}$, such that for all $n\in \mathbb{N}$
\begin{align}
\boldsymbol{y}_n\overset{n\rightarrow\infty}{\rightharpoonup}&
\;\boldsymbol{y}\quad\:\,\,\text{ in }\mathbfcal{X},\label{eq:6.8}\\
\begin{split}
\frac{d_{e_{U_n}}\boldsymbol{y}_n}{dt}+\mathbfcal{A}_{U_n}\boldsymbol{y}_n
=&\;\boldsymbol{0}\quad\;\,\,\text{ in }\mathbfcal{X}_{U_n}^*,\\
(\boldsymbol{j}_{U_n}\boldsymbol{y}_n)(0)=&\;\boldsymbol{y}_0\,\quad\text{ in }H_{U_n}.
\end{split}\label{eq:6.9}
\end{align}
For this specific $U\in\mathcal{U}_{\boldsymbol{y}_0}$ and $(U_n)_{n\in\mathbb{N}}$ 
from Lemma \ref{6.2} we set 
$U_\infty:=\overline{\text{span}\{\bigcup_{n\in\mathbb{N}}{U_n}\}}^{\|\cdot\|_V}\in \mathcal{U}_{\boldsymbol{y}_0}$. 
Then $U_\infty$ is a separable, reflexive Banach space and $\mathbfcal{A}_{U_\infty}:\mathbfcal{X}_{U_\infty}\cap_{\boldsymbol{j}_{U_\infty}}
\mathbfcal{Y}_{U_\infty}\rightarrow\mathbfcal{X}_{U_\infty}^*$ satisfies 
the assumptions of Theorem \ref{5.1} with respect to $U_\infty$ in the role of $V$. 
As $\mathbfcal{X}_{U_\infty}$ is closed with respect to $\tau(\mathbfcal{X},\mathbfcal{X}^*)$ 
and $(\boldsymbol{y}_n)_{n\in\mathbb{N}}\subseteq \mathbfcal{X}_{U_\infty}
\cap_{\boldsymbol{j}_{U_\infty}}\mathbfcal{Y}_{U_\infty}$, we deduce from 
$\eqref{eq:6.8}$ that $\boldsymbol{y}\in \mathbfcal{X}_{U_\infty}$ and
\begin{align}
\boldsymbol{y}_n\overset{n\rightarrow\infty}{\rightharpoonup}
\;\boldsymbol{y}\quad\text{ in }\mathbfcal{X}_{U_\infty}.\label{eq:6.10}
\end{align}
From $\|\boldsymbol{j}_{U_\infty}\boldsymbol{y}_n\|_{\mathbfcal{Y}_{U_\infty}}
=\|\boldsymbol{j}\boldsymbol{y}_n\|_{\mathbfcal{Y}}\leq M$ and 
$\|\mathbfcal{A}_{U_\infty}\boldsymbol{y}_n\|_{\mathbfcal{X}_{U_\infty}^*}
\leq\|\mathbfcal{A}\boldsymbol{y}_n\|_{\mathbfcal{X}^*}\leq M'$
we additionally obtain a subsequence $(\boldsymbol{y}_n)_{n\in\Lambda_{U_\infty}}
\subseteq \mathbfcal{X}_{U_\infty}\cap_{\boldsymbol{j}_{U_\infty}}\mathbfcal{Y}_{U_\infty}$, 
with $\Lambda_{U_\infty}\subseteq\mathbb{N}$, and 
$\boldsymbol{\xi}_{U_\infty}\in \mathbfcal{X}_{U_\infty}^*$ such that
\begin{align}
\begin{alignedat}{2}
\boldsymbol{j}_{U_\infty}\boldsymbol{y}_n&\;\;\overset{\ast}{\rightharpoondown}
\;\;\boldsymbol{j}_{U_\infty}\boldsymbol{y}&\quad
&\text{ in }L^\infty(I,H_{U_\infty})\quad(n\rightarrow\infty,n\in\Lambda_{U_\infty}),\\
\mathbfcal{A}_{U_\infty}\boldsymbol{y}_n&\overset{n\rightarrow\infty}{\rightharpoonup}
\boldsymbol{\xi}_{U_\infty}&&\text{ in }\mathbfcal{X}^*_{U_\infty}\quad(n\in \Lambda_{U_\infty}).
\end{alignedat}\label{eq:6.11}
\end{align}
All things considered, we are now in the situation of Theorem \ref{5.1} Step
 \hyperlink{3.1}{3.1} if 
$U_\infty$ takes the role of $V$ with Galerkin basis $(U_n,\boldsymbol{y}_0)_{n\in\mathbb{N}}$. 
Thus, we recapitulate Step \hyperlink{3.1}{3.1} till \hyperlink{3.4}{3.4} in Theorem \ref{5.1}. 
In doing so, we need to replace \eqref{eq:5.2} by \eqref{eq:6.8} and \eqref{eq:5.12} by 
\eqref{eq:6.9} together with \eqref{eq:6.10}. Thus, we infer that $\boldsymbol{y}\in \mathbfcal{W}_{U_\infty}$ with
\begin{align}
\begin{split}
\begin{alignedat}{2}
\frac{d_{e_{U_\infty}}\boldsymbol{y}}{dt}+\mathbfcal{A}_{U_\infty}\boldsymbol{y}
&=\boldsymbol{0}&&\quad\text{ in }\mathbfcal{X}_{U_\infty}^*,\\
(\boldsymbol{j}_{U_\infty}\boldsymbol{y})(0)&=\boldsymbol{y}_0&&\quad\text{ in }H_{U_\infty}.
\end{alignedat}
\end{split}\label{eq:6.12}
\end{align}
Let $u\in U\subseteq U_{\infty}$ and $\varphi\in C_0^\infty(I)$. Testing \eqref{eq:6.12} by 
$u\varphi\in \mathbfcal{X}_{U_\infty}$ and a subsequent application of the generalized integration by parts formula \eqref{eq:4.3}, \eqref{eq:4.2} and
\eqref{eq:4.4} with $U=U_\infty$ provide
\begin{align}
\langle \mathbfcal{A}\boldsymbol{y},u\varphi\rangle_{\mathbfcal{X}}
=\langle \mathbfcal{A}_{U_\infty}\boldsymbol{y},u\varphi\rangle_{\mathbfcal{X}_{U_\infty}}
=\langle e_{U_\infty}(u)\varphi^\prime,\boldsymbol{y}\rangle_{\mathbfcal{X}_{U_\infty}}
=\langle e(u)\varphi^\prime,\boldsymbol{y}\rangle_{\mathbfcal{X}}\label{eq:6.13}
\end{align}
for all $u\in U$ and $\varphi\in C_0^\infty(I)$. As $U\in\mathcal{U}_{\boldsymbol{y}_0}$ was arbitrary, 
\eqref{eq:6.13} is actually valid for all $u\in V$. Therefore, Proposition \ref{2.11} 
proves $\boldsymbol{y}\in\mathbfcal{W}$ with
\begin{align*}
\begin{alignedat}{2}
\frac{d_e\boldsymbol{y}}{dt}+\mathbfcal{A}\boldsymbol{y}
&=\boldsymbol{0}&&\quad\text{ in }\mathbfcal{X}^*,\\
(\boldsymbol{j}\boldsymbol{y})(0)&=\boldsymbol{y}_0&&\quad\text{ in }H.
\end{alignedat}
\end{align*}
This completes the proof of Theorem \ref{6.1}.\hfill $\square$
\newline

With the help of Theorem \ref{6.1} we are able to extend the results in \cite{KR19} to the case of purely reflexive $V$.

\begin{cor}\label{6.14}
	Let $(V,H,j)$ be an evolution triple, $1<p<\infty$ and $A(t):V\to V^*$, $t\in I$, a family of operators satisfying (\hyperlink{C.1}{C.1})--(\hyperlink{C.4}{C.4}). Then for arbitrary $\boldsymbol{y}_0\in H$ and $\boldsymbol{f}\in \mathbfcal{X}^*$ there exists a solution $\boldsymbol{y}\in\mathbfcal{W}$ of \eqref{eq:1}.
\end{cor}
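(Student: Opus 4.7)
The plan is to reduce Corollary~\ref{6.14} to a direct application of Theorem~\ref{6.1} by verifying that the induced operator coming from the family $(A(t))_{t\in I}$ satisfies all the hypotheses there.

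First, I would invoke Lemma~\ref{HL}: conditions (\hyperlink{C.1}{C.1})--(\hyperlink{C.4}{C.4}) ensure that the induced operator $\mathbfcal{A}:\mathbfcal{X}\cap_{\boldsymbol{j}}L^\infty(I,H)\rightarrow\mathbfcal{X}^*$, given by $(\mathbfcal{A}\boldsymbol{x})(t):=A(t)(\boldsymbol{x}(t))$, is well-defined, bounded, Bochner pseudo-monotone and Bochner coercive. Since $\mathbfcal{Y}=C^0(\overline{I},H)\hookrightarrow L^\infty(I,H)$, restricting the domain yields a bounded operator $\mathbfcal{A}:\mathbfcal{X}\cap_{\boldsymbol{j}}\mathbfcal{Y}\rightarrow\mathbfcal{X}^*$. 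Next, I would take the trivial decomposition $\mathbfcal{A}=\mathbfcal{A}_0+\mathbfcal{B}$ with $\mathbfcal{A}_0:=\boldsymbol{0}:\mathbfcal{X}\rightarrow\mathbfcal{X}^*$ (which is trivially monotone) and $\mathbfcal{B}:=\mathbfcal{A}:\mathbfcal{X}\cap_{\boldsymbol{j}}\mathbfcal{Y}\rightarrow\mathbfcal{X}^*$ (which is bounded), thereby matching assumptions (i) and (ii) of Theorem~\ref{6.1}.

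The remaining task is to promote Bochner pseudo-monotonicity to the $C^0$-Bochner condition (M), and Bochner coercivity to $C^0$-Bochner coercivity with respect to the given $\boldsymbol{f}$ and $\boldsymbol{y}_0$. For the pseudo-monotonicity direction, if $(\boldsymbol{x}_n)_{n\in\mathbb{N}}\subseteq\mathbfcal{X}\cap_{\boldsymbol{j}}\mathbfcal{Y}$ satisfies $\boldsymbol{x}_n\rightharpoonup\boldsymbol{x}$ in $\mathbfcal{X}\cap_{\boldsymbol{j}}\mathbfcal{Y}$, then Proposition~\ref{2.6} gives $\boldsymbol{x}_n\rightharpoonup\boldsymbol{x}$ in $\mathbfcal{X}$ together with $(\boldsymbol{j}\boldsymbol{x}_n)(t)\rightharpoonup(\boldsymbol{j}\boldsymbol{x})(t)$ in $H$ for every $t\in\overline{I}$; combining this pointwise convergence with the uniform $L^\infty(I,H)$-bound of $(\boldsymbol{j}\boldsymbol{x}_n)_{n\in\mathbb{N}}$ via dominated convergence against $L^1(I,H^*)$-test functions produces the weak-$\ast$ convergence $\boldsymbol{j}\boldsymbol{x}_n\overset{\ast}{\rightharpoondown}\boldsymbol{j}\boldsymbol{x}$ in $L^\infty(I,H)$. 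Therefore \eqref{eq:1.2}--\eqref{eq:1.4} are in force, so Bochner pseudo-monotonicity of $\mathbfcal{A}$ delivers $C^0$-Bochner pseudo-monotonicity, and Proposition~\ref{3.6}~(i) then supplies the $C^0$-Bochner condition (M).

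For the coercivity direction, I would exploit that on $\mathbfcal{X}\cap_{\boldsymbol{j}}\mathbfcal{Y}$ the continuous representative $\boldsymbol{j}\boldsymbol{x}\in\mathbfcal{Y}$ satisfies $\|\boldsymbol{j}\boldsymbol{x}\|_{\mathbfcal{Y}}=\|\boldsymbol{j}\boldsymbol{x}\|_{L^\infty(I,H)}$, so the norms of $\mathbfcal{X}\cap_{\boldsymbol{j}}\mathbfcal{Y}$ and $\mathbfcal{X}\cap_{\boldsymbol{j}}L^\infty(I,H)$ coincide on the former, and that continuity of both $\boldsymbol{j}\boldsymbol{x}$ and of $t\mapsto\langle\mathbfcal{A}\boldsymbol{x}-\boldsymbol{f},\boldsymbol{x}\chi_{[0,t]}\rangle_{\mathbfcal{X}}$ upgrades the ``a.e.\ $t\in I$'' condition of Definition~\ref{1.3}~(ii) to the ``for all $t\in\overline{I}$'' condition of Definition~\ref{3.7}~(i). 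Consequently Bochner coercivity transfers to $C^0$-Bochner coercivity with the same constant. With all hypotheses of Theorem~\ref{6.1} satisfied, the existence of a solution $\boldsymbol{y}\in\mathbfcal{W}$ of \eqref{eq:1} follows. The only mildly technical point is the two-norm reconciliation between $L^\infty(I,H)$ and $\mathbfcal{Y}$; this is routine, and there is no genuine obstacle, the proof being essentially a verification step.
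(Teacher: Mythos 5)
Your argument follows the same route as the paper: invoke Lemma~\ref{HL} to obtain that the induced operator is well-defined, bounded, Bochner pseudo-monotone and Bochner coercive, observe that these imply the $C^0$-Bochner versions (hence the $C^0$-Bochner condition (M) via Proposition~\ref{3.6}~(i)), and apply Theorem~\ref{6.1}. You go slightly beyond the paper's one-line proof by making the required decomposition $\mathbfcal{A}=\mathbfcal{A}_0+\mathbfcal{B}$ explicit with the trivial choice $\mathbfcal{A}_0=\boldsymbol{0}$, which is indeed valid and matches hypotheses (i)--(iii) of Theorem~\ref{6.1}. One small wrinkle in the coercivity paragraph: to derive $C^0$-Bochner coercivity from Bochner coercivity you only need the trivial implication ``for all $t\in\overline{I}$ $\Rightarrow$ for a.e.\ $t\in I$,'' whereas you phrase it as ``upgrading'' the a.e.\ condition to the all-$t$ condition via continuity, which is the opposite (and unneeded) direction; the conclusion and the norm-reconciliation $\|\boldsymbol{j}\boldsymbol{x}\|_{\mathbfcal{Y}}=\|\boldsymbol{j}\boldsymbol{x}\|_{L^\infty(I,H)}$ are still correct, so this is a cosmetic rather than substantive issue.
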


\begin{proof}
	Immediate consequence of Theorem \ref{6.1} and Lemma \ref{HL}, since Bochner pseudo-monotonicity and Bochner coercivity imply $C^0$-Bochner pseudo-monotonicity and $C^0$-Bochner coercivity.\hfill$\square$
\end{proof}

There is still some room for improvement. Indeed, the statements of Theorem \ref{6.1} and Corollary \ref{6.14} remain true under more general assumptions. For proofs we refer to \cite{alex-master}.

\begin{rmk}\label{6.15}
	\textbf{(i)} Corollary \ref{6.14} remains true if we replace the evolution triple $(V,H,j)$ by a pre-evolution triple $(V,H,j)$, i.e., not $V$ but $V\cap_j H$ embeds continuously and dense into $H$ (cf.~\cite{KR19} or \cite[Definition 8.1]{alex-master}).
	
	\textbf{(ii)} Theorem \ref{6.1} remains true if we replace $\mathbfcal{X}=L^p(I,V)$ by the intersection $\mathbfcal{X}=L^p(I,V)\cap_{\boldsymbol{j}} L^q(I,H)$, where $1<p\leq q<\infty$ and $(V,H,j)$ is a pre-evolution triple (cf.~\cite[Satz 8.7]{alex-master}).
\end{rmk}

\appendix
\section{Pull-back intersections}
\label{sec:7}
This passage is highly inspired by \cite[Chapter 3]{BS88}. 
For proofs we refer to \cite{KR19}.

\begin{defn}[Embedding]\label{7.1}
	Let $(X,\tau_X)$ and $(Y,\tau_Y)$ be topological vector spaces. 
	The operator $j:X\rightarrow Y$ is said to be an \textbf{embedding} 
	if it is linear, injective and continuous. In this case we use the notation
	\begin{align*}
	X\overset{j}{\hookrightarrow}Y.
	\end{align*}
	If $X\subseteq Y$ and $j=\text{id}_X$, then we write $X\hookrightarrow Y$ instead. 
\end{defn}

\begin{defn}[Compatible couple]\label{7.2}
	Let $(X,\|\cdot\|_X)$ and $(Y,\|\cdot\|_Y)$ be Banach spaces such that embeddings 
	$e_X:X\rightarrow Z$ and $e_Y:Y\rightarrow Z$ into a Hausdorff vector space 
	$(Z,\tau_Z)$ exist. Then we call $(X,Y):=(X,Y,Z,e_X,e_Y)$ a \textbf{compatible couple}.
\end{defn}

\begin{defn}[Pull-back intersection of Banach spaces]\label{7.3}
	Let $(X,Y)$ be a compatible couple. Then the operator 
	$j:=e_Y^{-1}e_X:e_X^{-1}(R(e_X)\cap R(e_Y))\rightarrow Y$
	is well-defined and we denote by
	\begin{align*}
	X\cap_j Y:=e_X^{-1}(R(e_X)\cap R(e_Y))\subseteq X
	\end{align*}
	the \textbf{pull-back intersection of $X$ and $Y$ in $X$ with respect to $j$}. 
	Furthermore, $j$ is said to be the \textbf{corresponding intersection embedding}. 
	If $X,Y\subseteq Z$ with $e_X=\text{id}_X$ and $e_Y=\text{id}_Y$, 
	then we set $X\cap Y:=X\cap_jY$.
\end{defn}

\begin{prop}[Completeness of $X\cap_j Y$]\label{7.4}
	Let $(X,Y)$ be a compatible couple. Then $X\cap_j Y$ is a vector space 
	and equipped with norm
	\begin{align*}
	\|\cdot\|_{X\cap_j Y}:=\|\cdot\|_X+\|j\cdot\|_Y
	\end{align*}
	a Banach-space. Moreover, $j:X\cap_j Y\to Y$ is an embedding.
\end{prop}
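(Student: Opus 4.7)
The plan is to unpack the definition of $X\cap_j Y$ in terms of $e_X,e_Y$ and reduce every claim to the corresponding property of the component norms $\|\cdot\|_X$ and $\|\cdot\|_Y$, using the Hausdorff property of $Z$ to glue two candidate limits together.

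First, I would verify that $X\cap_j Y$ is a vector subspace of $X$. Since $R(e_X)\cap R(e_Y)\subseteq Z$ is a linear subspace (intersection of the ranges of two linear maps) and $e_X:X\to Z$ is linear, its preimage $e_X^{-1}(R(e_X)\cap R(e_Y))$ is a linear subspace of $X$. The map $j=e_Y^{-1}e_X$ is linear on this subspace because $e_Y$ is injective and linear. Next, $\|\cdot\|_{X\cap_j Y}=\|\cdot\|_X+\|j\cdot\|_Y$ is easily seen to be a norm: it is a sum of a norm and a seminorm, and it already dominates $\|\cdot\|_X$, which separates points.

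The main step is completeness. Let $(x_n)_{n\in\mathbb{N}}\subseteq X\cap_j Y$ be Cauchy with respect to $\|\cdot\|_{X\cap_j Y}$. Then $(x_n)_{n\in\mathbb{N}}$ is Cauchy in $X$ and $(jx_n)_{n\in\mathbb{N}}$ is Cauchy in $Y$, so by completeness there exist $x\in X$ and $y\in Y$ with $x_n\to x$ in $X$ and $jx_n\to y$ in $Y$. The key identification is performed in $Z$: by continuity of $e_X:X\to Z$ and $e_Y:Y\to Z$,
\begin{align*}
e_X x_n \;\overset{n\to\infty}{\to}\; e_X x \quad\text{and}\quad e_Y(jx_n)\;\overset{n\to\infty}{\to}\;e_Y y \quad\text{in }Z.
\end{align*}
Since by definition $e_Y\circ j = e_X$ on $X\cap_j Y$, the sequence $e_X x_n = e_Y(jx_n)$ has both $e_X x$ and $e_Y y$ as limits in $Z$. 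The Hausdorff property of $Z$ forces $e_X x = e_Y y$, so $e_X x\in R(e_X)\cap R(e_Y)$, i.e.\ $x\in X\cap_j Y$, and the injectivity of $e_Y$ then yields $jx=y$. Consequently $\|x_n-x\|_{X\cap_j Y}=\|x_n-x\|_X+\|jx_n-jx\|_Y\to 0$, establishing completeness.

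Finally, $j:X\cap_j Y\to Y$ is linear by construction, injective because it is the restriction of the injective map $e_Y^{-1}e_X$ (both $e_X$ and $e_Y$ being injective), and continuous since $\|jx\|_Y\leq \|x\|_{X\cap_j Y}$ by the very definition of the norm; hence $j$ is an embedding in the sense of Definition \ref{7.1}. I expect the only subtle point to be the limit-identification argument via the Hausdorff property of $Z$; everything else is a direct consequence of the defining formula for the norm.
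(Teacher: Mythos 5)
Your proof is correct. The paper itself does not spell out a proof of Proposition~\ref{7.4} (the appendix opens with the remark that proofs are deferred to~\cite{KR19}), but your argument---reducing the vector-space and completeness claims to the component spaces and identifying the two candidate limits $e_Xx$ and $e_Yy$ in $Z$ via the Hausdorff property together with the identity $e_Y\circ j=e_X$---is the standard argument from interpolation theory (cf.~\cite[Chapter~3, Theorem~1.3]{BS88} for the analogous sum-space statement) and is exactly the approach one expects in~\cite{KR19}.
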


\begin{prop}[Properties of $X\cap_j Y$]\label{7.5}
	Let $(X,Y)$ be a compatible couple. Then it holds:
	\begin{description}[(iii)]
		\item[(i)] If $X$ and $Y$ are reflexive or separable, then $X\cap_j Y$ is as well.
		\item[(ii)] \textbf{First characterization of weak convergence in $X\cap_j Y$:} A sequence $(x_n)_{n\in \mathbb{N}}\subseteq X\cap_j Y$ and $x\in X\cap_j Y$ satisfy
		$x_n\overset{n\rightarrow\infty}{\rightharpoonup}x\text{ in }X\cap_j Y$ 
		if and only if
		$x_n\overset{n\rightarrow\infty}{\rightharpoonup}x\text{ in }X$ and 
		$jx_n\overset{n\rightarrow\infty}{\rightharpoonup}jx\text{ in }Y$.
        \item[(iii)] \textbf{Second characterization of weak convergence in $X\cap_j Y$:} 
        In addition, let $X$ be reflexive. A sequence $(x_n)_{n\in \mathbb{N}}\subseteq X\cap_j Y$ and $x\in X\cap_j Y$ satisfy
        $x_n\overset{n\rightarrow\infty}{\rightharpoonup}x\text{ in }X\cap_j Y$ 
        if and only if $\sup_{n\in\mathbb{N}}{\|x_n\|_X}<\infty$ and 
        $jx_n\overset{n\rightarrow\infty}{\rightharpoonup}jx\text{ in }Y$.
	\end{description}
\end{prop}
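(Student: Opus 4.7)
\textbf{Proof sketch for Proposition~\ref{7.5}.}
The unifying idea is to realise $X\cap_j Y$ as a closed subspace of the product $X\times Y$, and then to read off the three claims from standard facts about products and closed subspaces of Banach spaces. Concretely I would work with the linear map $\iota:X\cap_j Y\to X\times Y$ defined by $\iota(x):=(x,jx)$. Equipping $X\times Y$ with the norm $\|(x,y)\|_{X\times Y}:=\|x\|_X+\|y\|_Y$, the map $\iota$ is an isometry by the very definition of the norm on $X\cap_j Y$ given in Proposition~\ref{7.4}. Its image is the graph of $j:X\cap_j Y\to Y$, which is closed in $X\times Y$ by continuity of $j$. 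Hence $X\cap_j Y$ is linearly isometric to a closed subspace of $X\times Y$.

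For part (i), if $X$ and $Y$ are reflexive, then so is $X\times Y$, and closed subspaces of reflexive Banach spaces are reflexive; separability is inherited analogously, since subsets of separable metric spaces are separable.

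For part (ii), the forward implication is immediate from the weak continuity of the continuous linear maps $X\cap_j Y\hookrightarrow X$ and $j:X\cap_j Y\to Y$. For the converse I would use the Hahn--Banach description of the dual: via $\iota$, every $f\in(X\cap_j Y)^*$ extends to a continuous functional on $X\times Y$, hence takes the form $f(x)=\langle g,x\rangle_X+\langle h,jx\rangle_Y$ for some $g\in X^*$ and $h\in Y^*$. Thus weak convergence in $X\cap_j Y$ reduces precisely to weak convergence in $X$ together with weak convergence of $(jx_n)_{n\in\mathbb{N}}$ in $Y$. For part (iii), $jx_n\rightharpoonup jx$ in $Y$ implies, by uniform boundedness, $\sup_{n\in\mathbb{N}}\|jx_n\|_Y<\infty$, so together with $\sup_{n\in\mathbb{N}}\|x_n\|_X<\infty$ the sequence $(x_n)_{n\in\mathbb{N}}$ is bounded in $X\cap_j Y$. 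By reflexivity of $X$, every subsequence of $(x_n)_{n\in\mathbb{N}}$ admits a further weakly convergent sub-subsequence $x_{n_k}\rightharpoonup\tilde{x}$ in $X$; weak continuity of $j$ forces $j\tilde{x}=jx$ in $Y$, and the injectivity of $j$ yields $\tilde{x}=x$. The standard convergence principle \cite[Kap.~I, Lemma~5.4]{GGZ74} then upgrades this to $x_n\rightharpoonup x$ in $X$, whereupon part (ii) delivers the claim.

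The main obstacle I anticipate is the careful spelling out of the dual-space representation used in part (ii); to avoid circularity one must start from the isometric graph embedding $\iota$ and apply a proper Hahn--Banach extension from the closed subspace $\iota(X\cap_j Y)$ to all of $X\times Y$, since $(X\cap_j Y)^*$ is not literally a sum $X^*+j^*Y^*$ without additional identifications.
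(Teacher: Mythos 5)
Your strategy — realising $X\cap_j Y$ isometrically as the graph $\iota(X\cap_j Y)=\{(z,jz):z\in X\cap_j Y\}\subseteq X\times Y$ and then reading off all three claims from standard facts about products, Hahn--Banach extension and the convergence principle — is sound and is essentially the standard route (the paper itself defers the proof to \cite{KR19}). Parts (i) and (ii) go through as you sketch them, modulo one imprecision: the image $\iota(X\cap_j Y)$ is not closed ``by continuity of $j$'' in the naive sense, since $j$ is only norm-continuous from $(X\cap_j Y,\|\cdot\|_{X\cap_j Y})$ to $Y$, not from a subspace of $X$ with the $X$-norm. The correct justification is either that $X\cap_j Y$ is complete (Proposition \ref{7.4}) and $\iota$ is an isometry, or that $j$ is a closed (in the unbounded-operator sense) map from $X$ to $Y$, which follows from the Hausdorff property of the ambient space $Z$ in Definition \ref{7.2}.

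The genuine gap is in part (iii). You write that ``weak continuity of $j$ forces $j\tilde x=jx$,'' but $j$ is not weakly continuous as a map from $X$ to $Y$: the extracted sub-subsequence satisfies $x_{n_k}\rightharpoonup\tilde x$ only in $X$, and there is no reason for this to entail $jx_{n_k}\rightharpoonup j\tilde x$ in $Y$ — indeed, at this point you do not yet know that $\tilde x$ lies in $X\cap_j Y$, so $j\tilde x$ is not even defined. The step that actually closes the argument is the weak closedness of the graph: $\iota(X\cap_j Y)$ is a norm-closed linear subspace of $X\times Y$, hence convex and therefore weakly closed; since $(x_{n_k},jx_{n_k})\rightharpoonup(\tilde x,jx)$ in $X\times Y$ (componentwise weak convergence) and each term lies in $\iota(X\cap_j Y)$, so does the limit. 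This simultaneously yields $\tilde x\in X\cap_j Y$ and $j\tilde x=jx$, after which injectivity of $j$ gives $\tilde x=x$, and the convergence principle \cite[Kap.~I, Lemma 5.4]{GGZ74} together with part (ii) finishes the proof exactly as you describe. So the fix stays entirely within the toolkit you already set up; you only need to replace the appeal to ``weak continuity of $j$'' by an appeal to weak closedness of the graph $\iota(X\cap_j Y)$.
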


\subsection*{Acknowledgments}
I would like to thank Michael R\r{u}\v{z}i\v{c}ka for all his helpful advices during the preparation of this paper.

\ifx\undefined\bysame
\newcommand{\bysame}{\leavevmode\hbox to3em{\hrulefill}\,}
\fi


\begin{thebibliography}{10}
	
	\bibitem{BR17}
	{\sc E.~B\"{a}umle and M.~R\r{u}\v{z}i\v{c}ka}, {\sl Note on the existence
		theory for evolution equations with pseudo-monotone operators}, Ric. Mat.
	{\bf 66} (2017), no.~1, 35--50.
	
	\bibitem{BS88}
	{\sc C.~Bennett and R.~Sharpley}, {\sl Interpolation of operators}, Pure and
	Applied Mathematics, vol. 129, Academic Press, Inc., Boston, MA, 1988.
	
	\bibitem{BT38}
	{\sc S.~Bochner and A.~E. Taylor}, {\sl Linear functionals on certain spaces of
		abstractly-valued functions}, Ann. of Math. (2) {\bf 39} (1938), no.~4,
	913--944.
	
	\bibitem{BF13}
	{\sc F.~Boyer and P.~Fabrie}, {\sl Mathematical tools for the study of the
		incompressible {N}avier-{S}tokes equations and related models}, Applied
	Mathematical Sciences, vol. 183, Springer, New York, 2013.
	
	\bibitem{Bre68}
	{\sc H.~Brezis}, {\sl \'{E}quations et in\'{e}quations non lin\'{e}aires dans
		les espaces vectoriels en dualit\'{e}}, Ann. Inst. Fourier (Grenoble) {\bf
		18} (1968), no.~fasc. 1, 115--175.
	
	\bibitem{Bro68}
	{\sc F.~E. Browder}, {\sl Nonlinear maximal monotone operators in {B}anach
		space}, Math. Ann. {\bf 175} (1968), 89--113.
	
	\bibitem{BH72}
	{\sc F.~E. Browder and P.~Hess}, {\sl Nonlinear mappings of monotone type in
		{B}anach spaces}, J. Functional Analysis {\bf 11} (1972), 251--294.
	
	\bibitem{Dro01}
	{\sc J. Droniou}, {\sl Int{\'e}gration et Espaces de Sobolev {\`a} Valeurs Vectorielles}, Lecture notes, Universit{\'e} de Provence, Marseille,  2001.
	
	\bibitem{GGZ74}
	{\sc H.~Gajewski, K.~Gr\"{o}ger, and K.~Zacharias}, {\sl Nichtlineare
		{O}peratorgleichungen und {O}peratordifferentialgleichungen},
	Akademie-Verlag, Berlin, 1974.
	
	\bibitem{Hal80}
	{\sc J.~K. Hale}, {\sl Ordinary differential equations}, Second ed., Robert E.
	Krieger Publishing Co., Inc., Huntington, N.Y., 1980.
	
	\bibitem{alex-master}
	{\sc A.~Kaltenbach}, {\sl Verallgemeinerte nichtlineare
		{E}volutionsgleichungen}, Master's thesis, Institute of Applied Mathematics,
	Albert-Ludwigs-University Freiburg, 2019.
	
	\bibitem{KR19}
	{\sc A.~Kaltenbach and M.~R{\r u}{\v z}i{\v c}ka}, {\sl Note on the existence
		theory for pseudo-monotone evolution problems}.
	
	\bibitem{Lio69}
	{\sc J.-L. Lions}, {\sl Quelques m\'{e}thodes de r\'{e}solution des probl\`emes
		aux limites non lin\'{e}aires}, Dunod; Gauthier-Villars, Paris, 1969.
	
	\bibitem{LS65}
	{\sc J.-L. Lions and W.~A. Strauss}, {\sl Some non-linear evolution equations},
	Bull. Soc. Math. France {\bf 93} (1965), 43--96.
	
	\bibitem{Pap97}
	{\sc N.~S. Papageorgiou}, {\sl On the existence of solutions for nonlinear
		parabolic problems with nonmonotone discontinuities}, J. Math. Anal. Appl.
	{\bf 205} (1997), no.~2, 434--453.
	
	\bibitem{Rou05}
	{\sc T.~Roub{\'{\i}}{\v{c}}ek}, {\sl Nonlinear partial differential equations
		with applications}, International Series of Numerical Mathematics, vol. 153,
	Birkh\"auser Verlag, Basel, 2005.
	
	\bibitem{Ru04}
	{\sc M.~R{\r u}{\v z}i{\v c}ka}, {\sl Nonlinear functional analysis. An
		introduction. ({N}ichtlineare {F}unktionalanalysis. {E}ine {E}inf\"uhrung.)},
	Berlin: Springer. xii, 208~p., 2004 (German).
	
	\bibitem{Shi97}
	{\sc N.~Shioji}, {\sl Existence of periodic solutions for nonlinear evolution
		equations with pseudomonotone operators}, Proc. Amer. Math. Soc. {\bf 125}
	(1997), no.~10, 2921--2929.
	
	\bibitem{Sho97}
	{\sc R.~E. Showalter}, {\sl Monotone operators in {B}anach space and nonlinear
		partial differential equations}, Mathematical Surveys and Monographs,
	vol.~49, American Mathematical Society, Providence, RI, 1997.
	
	\bibitem{Tol18}
	{\sc J.~F. Toland}, {\sl Localizing Weak Convergence in L ?}, 2018.
	
	\bibitem{Yos80}
	{\sc K.~Yosida}, {\sl Functional analysis}, Sixth ed., Grundlehren der
	Mathematischen Wissenschaften [Fundamental Principles of Mathematical
	Sciences], vol. 123, Springer-Verlag, Berlin-New York, 1980.
	
	\bibitem{Zei90A}
	{\sc E.~Zeidler}, {\sl Nonlinear functional analysis and its applications.
		{II}/{A} {L}inear monotone operators}, Springer-Verlag, New York, 1990.
	
	\bibitem{Zei90B}
	{\sc E.~Zeidler}, {\sl Nonlinear functional analysis and its applications. {II}/{B}
		{N}onlinear monotone operators}, Springer-Verlag, New York, 1990.
	
\end{thebibliography}
\end{document}